\documentclass[a4paper, 10pt, final]{amsart}

\usepackage{xcolor}

\usepackage[bookmarks]{hyperref}
\hypersetup{
	allcolors=black,
	pdftitle={Sharelatex Example},
	bookmarks=true,
	pdfpagemode=FullScreen,
}
\hypersetup{
	pdftitle={Your title here},
	pdfauthor={Your name here},
	pdfsubject={Your subject here},
	pdfkeywords={keyword1, keyword2},
	bookmarksnumbered=true,     
	bookmarksopen=true,         
	bookmarksopenlevel=1,       
	colorlinks=true,            
	pdfstartview=Fit,           
	pdfpagemode=UseOutlines,    
	pdfpagelayout=TwoPageRight
}

\usepackage{amsthm}

\theoremstyle{definition}

\usepackage[latin1]{inputenc}
\usepackage[T1]{fontenc}
\usepackage[english]{babel}
\usepackage{fullpage}
\usepackage{amsmath}
\usepackage{setspace}\onehalfspacing
\usepackage{amsfonts}
\usepackage{amssymb}
\usepackage{graphicx}
\usepackage{amscd}
\usepackage{mathrsfs}
\usepackage{fancybox}

\usepackage{lmodern}
\usepackage{graphicx}
\usepackage{fancyheadings}
\usepackage{chngcntr}
\counterwithout{figure}{section}
\usepackage{tikz}
\usetikzlibrary{calc}
\usepackage{pgfplots}
\usepgfplotslibrary{groupplots}
\usepgfplotslibrary{units}

\usetikzlibrary{arrows}
\usepackage{tkz-berge}

\usetikzlibrary{graphs,shapes.geometric,arrows,fit,matrix,positioning}

\usepackage[extra]{tipa}

\usepackage[OT2,T1]{fontenc}

\usepackage{amsmath, amsthm, amsfonts}
\cornersize{2}
\setlength{\fboxsep}{1 mm}
\setlength{\fboxrule}{2 mm}

\usepackage{graphicx}

\DeclareMathOperator{\Lie}{Lie}
\DeclareMathOperator{\weight}{weight}
\DeclareMathOperator{\id}{id}

\DeclareMathOperator{\an}{an}

\DeclareMathOperator{\iter}{iter}

\DeclareMathOperator{\Ad}{Ad}

\DeclareMathOperator{\reg}{reg}
\DeclareMathOperator{\depth}{depth}

\DeclareMathOperator{\LAE}{LAE}
\DeclareMathOperator{\un}{un}
\DeclareMathOperator{\DR}{DR}

\DeclareMathOperator{\har}{har}
\DeclareMathOperator{\Spec}{Spec}

\DeclareMathOperator{\Map}{Map}
\DeclareMathOperator{\gp}{gp}
\DeclareMathOperator{\unr}{unr}

\DeclareMathOperator{\Li}{Li}
\DeclareMathOperator{\loc}{loc}

\DeclareMathOperator{\crys}{crys}

\DeclareMathOperator{\elim}{elim}

\DeclareMathOperator{\Eucl}{Eucl}
\DeclareMathOperator{\Hom}{Hom}
\DeclareMathOperator{\comp}{comp}

\DeclareMathOperator{\KZ}{KZ}

\DeclareMathOperator{\dec}{dec}

\theoremstyle{definition}

\newtheorem{Theorem}{Theorem}[section]
\newtheorem{Proposition}[Theorem]{Proposition}

\newtheorem{Lemma}[Theorem]{Lemma}

\newtheorem{Definition}[Theorem]{Definition}

\newtheorem{Fact}[Theorem]{Fact}

\newtheorem{Lemma-Notation}[Theorem]{Lemma-Notation}

\newtheorem{Example}[Theorem]{Example}

\newtheorem{Problem}[Theorem]{Problem}

\newtheorem{Proposition-Definition}[Theorem]{Proposition-Definition}

\newtheorem{Remark}[Theorem]{Remark}
\newtheorem{Nota Bene}[Theorem]{Nota Bene}

\newcommand{\simlra}{\buildrel \sim \over \longrightarrow}

\setcounter{tocdepth}{1}

\input cyracc.def
\DeclareFontFamily{U}{russian}{}
\DeclareFontShape{U}{russian}{m}{n}
        { <5><6> wncyr5
        <7><8><9> wncyr7
        <10><10.95><12><14.4><17.28><20.74><24.88> wncyr10 }{}
\DeclareSymbolFont{Russian}{U}{russian}{m}{n}
\DeclareSymbolFontAlphabet{\mathcyr}{Russian}
\makeatletter
\let\@math@cyr\mathcyr
\renewcommand{\mathcyr}[1]{\@math@cyr{\cyracc #1}}
\makeatother
\newcommand{\sh}{\mathcyr{sh}} 

\setcounter{secnumdepth}{4}

\makeatletter
\newcounter{subsubsubsection}[subsubsection]
\renewcommand\thesubsubsubsection{\thesubsubsection .\@alph\c@subsubsubsection}
\newcommand\subsubsubsection{\@startsection{subsubsubsection}{4}{\z@}%
                                     {-3.25ex\@plus -1ex \@minus -.2ex}%
                                     {1.5ex \@plus .2ex}%
                                     {\normalfont\normalsize\bfseries}}
\newcommand*\l@subsubsubsection{\@dottedtocline{3}{10.0em}{4.1em}}
\newcommand*{\subsubsubsectionmark}[1]{}
\makeatother

\usepackage[top=3.5cm, bottom=3.5cm, left=2.5cm, right=2.5cm]{geometry}

\author{David Jarossay}

\title{}

\address{Universit\'{e} de Strasbourg, CNRS, IRMA UMR 7501, F-67000 Strasbourg, France}
\email{jarossay@math.unistra.fr}

\begin{document}

\begin{center}
\begin{Large} \textbf{AN EXPLICIT THEORY OF $\pi_{1}^{\un,\crys}(\mathbb{P}^{1} - \{0,\mu_{N},\infty\})$}
\end{Large}
\\ \text{ }
\\ \begin{large}\textbf{V : The  point of view of ${\varprojlim}_{N}\text{ }\pi_{1}^{\un}(\mathbb{P}^{1} - \{0,\mu_{N},\infty\})$}
\\ \text{ }
\\ 
\textbf{V-1 : The Frobenius extended to $\pi_{1}^{\un,\DR}(\mathbb{P}^{1} - \{0,\mu_{p^{\alpha}N},\infty\})$}
\end{large}
\end{center}

\begin{abstract} Let $p$ a prime number. For all $N \in \mathbb{N}^{\ast}$ prime to $p$, let $k_{N}$ be a finite field of characteristic $p$ containing a primitive $N$-th root of unity. Let $X_{k_{N},N}=\text{ }\mathbb{P}^{1} - (\{0,\infty\} \cup \mu_{N})\text{ }/\text{ }k_{N}$. This work is an explicit theory of the crystalline pro-unipotent fundamental groupoid $(\pi_{1}^{\un,\crys})$ of $X_{k_{N},N}$. In the parts I to IV, we have considered each possible value of $N$ separately. The purpose of part V is to study the role of the morphisms relating $\pi_{1}^{\un}(\mathbb{P}^{1} - \{0,\mu_{N_{1}},\infty\})$ and $\pi_{1}^{\un}(\mathbb{P}^{1} - \{0,\mu_{N_{2}},\infty\})$ when $N_{1}$ divides $N_{2}$. In V-1, we specify this question to the theme of part I, the computation of the Frobenius. For any $N \in \mathbb{N}^{\ast}$, let $K_{N}=\mathbb{Q}_{p}(\xi_{N})$ where $\xi_{N}\in \overline{\mathbb{Q}_{p}}$ is a primitive $N$-th root of unity, and $X_{K_{N},N} = \mathbb{P}^{1} - (\{0,\infty\} \cup \mu_{N})\text{ }/\text{ }K_{N}$.
\newline For $N$ prime to $p$, we are used to view the Frobenius of $\pi_{1}^{\un,\crys}(X_{k_{N},N})$ as a structure on $\pi_{1}^{\un,\DR}(X_{K_{N},N})$.
\newline In V-1, we show that the Frobenius of  $\pi_{1}^{\un,\DR}(X_{K_{N},N})$, iterated $\alpha \in \mathbb{N}^{\ast}$ times, can be extended canonically as a structure of $\pi_{1}^{\un,\DR}(X_{K_{p^{\alpha}N},p^{\alpha}N})$. This allows to define generalizations of adjoint $p$-adic multiple zeta values associated with roots of unity of order $p^{\alpha}N$, and several related objects. This also gives a canonical framework to relate to each other the direct method of computation of the Frobenius of I-1 and the indirect methods of computation of the Frobenius of I-2 and I-3.
\end{abstract}

\maketitle

\noindent
\newline 
\newline 

\tableofcontents

\newpage

\section{Introduction}

\subsection{}

Let $p$ be a prime number. For each $N \in \mathbb{N}^{\ast}$ prime to $p$, let $q_{N} \in p^{\mathbb{N}}$ such that the finite field $k_{N}=\mathbb{F}_{q_{N}}$ contains a primitive $N$-th root of unity. Let $X_{k_{N},N}$ be the variety $\mathbb{P}^{1} - (\{0,\infty\} \cup \mu_{N})$ over $k_{N}$. The purpose of this work is to construct an explicit theory of the crystalline pro-unipotent fundamental groupoid $\pi_{1}^{\un,\crys}$ of $X_{k_{N},N}$, in the sense of \cite{Deligne}, \cite{CLS}, \cite{Shiho 1}, \cite{Shiho 2}, with a particular focus on the $p$-adic multiple zeta values at $N$-th roots of unity ($p$MZV$\mu_{N}$'s) where "at roots of unity" is omitted when $N=1$ ($p$MZV's), which are images of $p$-adic periods by a reduction map, are defined via the Frobenius and characterize the Frobenius. The complex multiple zeta values at roots of unity (MZV$\mu_{N}$'s) are Betti-De Rham periods of $\pi_{1}^{\un}(\mathbb{P}^{1} - \{0,\mu_{N},\infty\})$, and are the following numbers
\begin{equation}
\label{eq:multizetas} \zeta \big(\begin{array}{c} \tilde{\xi}_{N}^{j_{d}},\ldots,\tilde{\xi}_{N}^{j_{1}} \\ n_{d},\ldots,n_{1} \end{array} \big) =  \sum_{0<m_{1}<\ldots<m_{d}} \frac{\big( \frac{\tilde{\xi}_{N}^{j_{2}}}{\tilde{\xi}_{N}^{j_{1}}}\big)^{m_{1}} \ldots \big( \frac{1}{\tilde{\xi}_{N}^{j_{d}}}\big)^{m_{d}}}{m_{1}^{n_{1}} \ldots m_{d}^{n_{d}}} \in \mathbb{C}
\end{equation}
\noindent with $N \in \mathbb{N}^{\ast}$, $\tilde{\xi}_{N}$ a primitive $N$-th root of unity in $\overline{\mathbb{Q}} \hookrightarrow \mathbb{C}$, $n_{1},\ldots,n_{d} \in \mathbb{N}^{\ast}$, and $j_{1},\ldots,j_{d} \in \{1,\ldots,N\}$, such that $(\tilde{\xi}_{N}^{j_{d}},n_{d}) \not= (1,1)$.
\newline 
\newline Let now $\xi_{N}$ be a primitive $N$-th root of unity in $\overline{\mathbb{Q}_{p}}$ and $K_{N}= \mathbb{Q}_{p}(\xi_{N})\subset \overline{\mathbb{Q}_{p}}$. Let $\alpha \in \pm \mathbb{N}^{\ast} \cup \{\pm \infty\}$. One has for each $\alpha$ a family of numbers called $p$-adic multiple zeta values at roots of unity, denoted by  $\zeta_{p,\alpha}\big(\begin{array}{c} \xi_{N}^{j_{d}},\ldots,\xi_{N}^{j_{1}} \\ n_{d},\ldots,n_{1} \end{array} \big) \in K_{N}$, for all $n_{d},\ldots,n_{1} \in \mathbb{N}^{\ast}$, $j_{1},\ldots,j_{d} \in \mathbb{Z}/N\mathbb{Z}$. They have been defined in \cite{Deligne Goncharov}, \cite{Furusho 1}, \cite{Furusho 2}, \cite{Yamashita}, for certain particular values of $\alpha$, then a different convention involving the inverse of the Frobenius was adopted in \cite{U1}, \cite{U2}, and the definition was finally generalized to all values of $\alpha$ in \cite{I-1} and \cite{I-3}. They characterize the Frobenius of  $\pi_{1}^{\un,\crys}(X_{k_{N},N})$ at base-points $(-\vec{1}_{1},\vec{1_{0}})$ iterated $\alpha$ times. By extending the conjecture in \cite{Deligne Goncharov} \S5.28, for each $\alpha$ and $N$, the ideal of the algebraic relations satisfied by the numbers $\zeta_{p,\alpha}$ is conjecturally generated by the algebraic relations satisfied by their complex analogues (\ref{eq:multizetas}) and the vanishing of the $p$-adic analogue of $2i\pi$, which implies in particular that, for all $s \in \mathbb{N}^{\ast}$, we have $\zeta_{p,\alpha}(2n) = 0$ (where, when $N=1$,
$\big(n_{d},\ldots,n_{1} \big) = \big(\begin{array}{c} 1,\ldots,1 \\ n_{d},\ldots,n_{1} \end{array}\big)$).
\newline
\newline For all $N \in \mathbb{N}^{\ast}$, let $X_{K_{N},N} = \mathbb{P}^{1} - \{0,\mu_{N},\infty\}\text{ }/\text{ }K_{N}$. When $N$ is prime to $p$, following \cite{Deligne}, \S13, we view $\pi_{1}^{\un,\crys}(X_{k_{N},N})$ as $\pi_{1}^{\un,\DR}(X_{K_{N},N})$ equipped with the Frobenius. We have computed the Frobenius, in particular $p$MZV$\mu_{N}$'s, in part I \cite{I-1} \cite{I-2} \cite{I-3}. We have used our explicit computation to study explicitly $p$MZV$\mu_{N}$'s in part II \cite{II-1} \cite{II-2} \cite{II-3}, part III \cite{III-1} \cite{III-2}, part IV \cite{IV-1} \cite{IV-2}. This part V is the last one of this theory.

\subsection{} The initial motivation for this part V is the following. Until now, we have considered the groupoids $\pi_{1}^{\un}(\mathbb{P}^{1} - \{0,\mu_{N},\infty\})$ for each $N$ separately.  If $N_{1},N_{2} \in \mathbb{N}^{\ast}$ are such that $N_{1}|N_{2}$, we have a morphism of groupoids 
\begin{equation} \label{eq:morphisms 0} \pi_{1}^{\un}(\mathbb{P}^{1} - \{0,\mu_{N_{2}},\infty\})|_{B_{N_{1},N_{2}}} \rightarrow \pi_{1}^{\un}(\mathbb{P}^{1} - \{0,\mu_{N_{1}},\infty\})|_{B_{N_{1},N_{2}}} 
\end{equation}
\noindent where $|_{B_{N_{1},N_{2}}}$ means the restriction to the set $B_{N_{1},N_{2}}$ of couples of base-points which are shared by $\pi_{1}^{\un}(\mathbb{P}^{1} - \{0,\mu_{N_{2}},\infty\})$ and $\pi_{1}^{\un}(\mathbb{P}^{1} - \{0,\mu_{N_{1}},\infty\})$. If $N_{1}$ and $N_{2}$ are prime to $p$, for convenient base-points $x,y$ over $\overline{\mathbb{F}_{p}}$, we have a morphism
\begin{equation}
\label{eq:morphisms}
\pi_{1}^{\un,\crys}(\mathbb{P}^{1} - \{0,\mu_{N_{2}},\infty\},y,x) \rightarrow  \pi_{1}^{\un,\crys}(\mathbb{P}^{1} - \{0,\mu_{N_{1}},\infty\},y,x)
\end{equation}
\noindent which means that, if $\tilde{x}$ and $\tilde{y}$ are convenient lifts of $x$ and $y$ over $W(\overline{\mathbb{F}_{p}})$, we have a morphism of pro-affine schemes between the De Rham realizations $\pi_{1}^{\un,\DR}(\mathbb{P}^{1} - \{0,\mu_{N_{2}},\infty\},\tilde{y},\tilde{x}) \rightarrow  \pi_{1}^{\un,\DR}(\mathbb{P}^{1} - \{0,\mu_{N_{1}},\infty\},\tilde{y},\tilde{x})$, which is compatible with the Frobenius. In particular, the notation $\zeta_{p,\alpha}$ above is consistent : two equal sequences $(\xi_{N_{1}}^{j_{d}},\ldots,\xi_{N_{1}}^{j_{1}})=(\xi_{N_{2}}^{j'_{d}},\ldots,\xi_{N_{2}}^{j'_{1}})$ give the same number. The morphisms (\ref{eq:morphisms 0}) form a projective system when $(N_{1},N_{2})$ varies. Moreover, each of the morphisms (\ref{eq:morphisms 0}) has a natural section, and these sections form an inductive system when $(N_{1},N_{2})$ varies. It seems reasonable to expect that these objects have a certain role to play in this theory. The subject of this part V is to realize this expectation.
In this V-1, we specialize this problem to the topic of part I : what we want is to bring together the computation of the Frobenius and the morphisms (\ref{eq:morphisms 0}).

\subsection{} Let us see what is at stake by the question above. First, the computation of the Frobenius and the morphisms (\ref{eq:morphisms}), as well as their sections, actually commute. Indeed, the morphisms (\ref{eq:morphisms}) actually commute with the canonical presentations of the groupoids $\pi_{1}^{\un,\DR}(X_{K_{N},N})$ : one has, for each $N$, a canonical base-point $\omega_{\DR}$ of $\pi_{1}^{\un,\DR}(X_{K_{N},N})$, such that the groupoid $\pi_{1}^{\un}(\mathbb{P}^{1} - \{0,\mu_{N},\infty\})$ can be described functorially in terms of its fiber at $\omega_{\DR}$, and the Hopf algebra of global functions on the affine group scheme $\pi_{1}^{\un,\DR}(\mathbb{P}^{1} - \{0,\mu_{N},\infty\},\omega_{\DR})$ is the shuffle Hopf algebra over the alphabet $e_{0\cup \mu_{N}}=\{e_{0},e_{\xi_{N}^{1}},\ldots,e_{\xi_{N}^{N}}\}$, generated as a $\mathbb{Q}$-vector space by words over $\{e_{0},e_{\xi_{N}^{1}},\ldots,e_{\xi_{N}^{N}}\}$ ; this alphabet represents a basis of $H^{1,\DR}(\mathbb{P}^{1} - \{0,\mu_{N},\infty\})$, and the canonical choice of basis, which is used all the time, is given by the differential forms $\omega_{0}(z) = \frac{dz}{z}$ and 
$\omega_{\xi_{N}^{j}}(z) = \frac{dz}{z-\xi_{N}^{j}}$, $j=1,\ldots,N$.
Since, when $N_{1}|N_{2}$, we have an inclusion $\{\omega_{0},\omega_{\xi_{N_{1}}^{1}},\ldots,\omega_{\xi_{N_{1}}^{N_{1}}}\} \subset \{\omega_{0},\omega_{\xi_{N_{2}}^{1}},\ldots,\omega_{\xi_{N_{2}}^{N_{2}}}\}$, and since the Frobenius is functorial, the computation of the Frobenius of $\pi_{1}^{\un,\DR}(X_{K_{N_{1}},N_{1}})$ can be viewed canonically as a computation within $\pi_{1}^{\un,\DR}(X_{K_{N_{2}},N_{2}})$ and, conversely, the computation of the Frobenius of  $\pi_{1}^{\un,\DR}(X_{K_{N_{2}},N_{2}})$ restricts to the computation of the Frobenius of  $\pi_{1}^{\un,\DR}(X_{K_{N_{1}},N_{1}})$. This shows that a part of the question to bring together the computation of the Frobenius and the morphisms (\ref{eq:morphisms 0}) is trivial.
\newline 
\newline Let us now consider $\pi_{1}^{\un,\DR}(X_{K_{N'},N'})$ with $N'$ non-prime to $p$, and divisible by $N$ which is prime to $p$, say $N=N'p^{-v_{p}(N')}$. Note that since $K_{N'}$ is ramified, it would make no sense to apply to $\pi_{1}^{\un,\DR}(X_{K_{N'},N'})$ the construction of the Frobenius structure of \cite{Deligne}, \S11, and that the reduction modulo $p$ of $X_{K_{N'},N'}$ is equal to the one of $X_{K_{N},N}$. One has a morphism 
\begin{equation}
\label{eq:morphisms 3} \pi_{1}^{\un,\DR}(X_{K_{N'},N'})|_{B_{N',N}} 
\rightarrow \pi_{1}^{\un,\DR}(X_{K_{N},N})|_{B_{N',N}}
\end{equation}
\noindent Let us consider the Frobenius of $\pi_{1}^{\un,\DR}(X_{K_{N},N})$ iterated $\alpha \in \mathbb{N}^{\ast} \cup - \mathbb{N}^{\ast}$ times, with $N'=Np^{|\alpha|}$. There is a basic reason why our study of $\pi_{1}^{\un,\crys}(X_{K_{N},N})$ is somewhat related to the morphism (\ref{eq:morphisms 3}). Let $W_{N}=W(k_{N})$ be the ring of Witt vectors of $k_{N}$, and let $X_{W_{N},N} = \mathbb{P}^{1} - \{0,\mu_{N},\infty\} / W_{N}$, which has the global lift of Frobenius $z \mapsto z^{p}$. For $\alpha \in \mathbb{N}^{\ast}$, the map $z \mapsto z^{p^{\alpha}}$ sends the groupoid $\pi_{1}^{\un,\DR}(X_{K_{N},N})$, with its connection $\nabla_{\KZ}^{\mu_{N}}$, to a variant on $X_{K_{N},N}^{(p^{\alpha})}=X_{W_{N},N}^{(p^{\alpha})} \times_{\Spec(W_{N})} \Spec(K_{N})$ where $X_{W_{N},N}^{(p^{\alpha})}$ is equal to the pullback of $X_{W_{N},N}$ by the $\alpha$-th iteration of the Frobenius automorphism $\sigma : W_{N} \rightarrow W_{N}$ of $W_{N}$. Whereas $\nabla_{\KZ}^{\mu_{N}}$ is the map $f \mapsto f^{-1}(df - \sum_{i=0}^{N} e_{z_{i,N}} \frac{dz}{z-z_{i,N}})$, with $z_{0,N}=0$ and $z_{i,N}= \xi_{N}^{i}$, $i=1,\ldots,N$, the variant $\nabla_{\KZ}^{\mu_{N}^{(p^{\alpha})}}$ is $f \mapsto f^{-1}(df - \sum_{i=0}^{N} e_{z_{i,N}^{(p^{\alpha})}} \frac{p^{\alpha}z^{p^{\alpha}-1}dz}{z^{p^{\alpha}}-z_{i}^{p^{\alpha}}})$ since, the $z_{i,N}$'s being roots of unity, we have $\sigma^{\alpha}(z_{i,N}) = z_{i,N}^{p^{\alpha}}$ for all $i \in \{1,\ldots,N\}$. The Frobenius of $\pi_{1}^{\un,\DR}(X_{K_{N},N})$ is a natural isomorphism between
$(\pi_{1}^{\un,\DR}(X_{K_{N},N}),\nabla_{\KZ}^{\mu_{N}})$ and 
$(\pi_{1}^{\un,\DR}(X^{(p^{\alpha})}_{K_{N},N}),\nabla_{\KZ}^{\mu_{N}^{(p^{\alpha})})}$. By writing :
\begin{equation} \label{eq:differential forms modified} \frac{d(z^{p^{\alpha}})}{z^{p^{\alpha}}-z_{i,N}^{p^{\alpha}}} = \sum_{\rho \in \mu_{p^{\alpha}}(\overline{\mathbb{Q}_{p}})}  \frac{z_{i,N} dz}{z - \rho z_{i,N}}
= \sum_{\{ \tilde{\rho} \in \mu_{p^{\alpha}N}(\overline{\mathbb{Q}_{p}})\text{ | } |\tilde{\rho} - z_{i,N}|_{p}<1\}} \frac{z_{i,N} dz}{z - \tilde{\rho}}
\end{equation}
\noindent we see that the Frobenius of $\pi_{1}^{\un,\DR}(X_{K_{N},N})$ can be viewed as an isomorphism between two quotients of the groupoid $\pi_{1}^{\un,\DR}(X_{K_{p^{\alpha}N},p^{\alpha}N})$ with its connection $\nabla_{\KZ}^{\mu_{p^{\alpha}N}}$.
\newline 
\newline This suggests to study, more generally, how $\pi_{1}^{\un,\DR}(X_{K_{p^{|\alpha|}N},p^{|\alpha|}N})$ is connected to $\pi_{1}^{\un,\crys}(X_{K_{N},N})$ where $\alpha \in \mathbb{N}^{\ast} \cup - \mathbb{N}^{\ast}$ is the number of iterations of the Frobenius and $N \in \mathbb{N}^{\ast}$ is prime to $p$.
\newline We are going to adopt a radical form of this question : reformulate, or enrich, the Frobenius structure of $(\pi_{1}^{\un,\DR}(\mathbb{P}^{1} - \{0,\mu_{N},\infty\}),\nabla^{\mu_{N}}_{\KZ})$, iterated $\alpha$ times, with its explicit formulas from part I, into a structure on $(\pi_{1}^{\un,\DR}(\mathbb{P}^{1} - \{0,\mu_{p^{\alpha}N},\infty\}),\nabla_{\KZ}^{\mu_{p^{\alpha}N}})$, with explicit formulas within $(\pi_{1}^{\un,\DR}(\mathbb{P}^{1} - \{0,\mu_{p^{\alpha}N},\infty\}),\nabla_{\KZ}^{\mu_{p^{\alpha}N}})$.

\subsection{} Given the question above, let us review how we have computed the Frobenius in part I \cite{I-1}, \cite{I-2}, \cite{I-3}. The results of the computation were expressed in terms of weighted multiple harmonic sums at roots of unity : for $n,d \in \mathbb{N}^{\ast}$, $s_{d},\ldots,s_{1} \in \mathbb{N}^{\ast}$, $j_{1},\ldots,j_{d+1} \in \mathbb{Z}/N\mathbb{Z}$,

\begin{equation}
\label{eq: multiple harmonic sums}
\har_{m} \big( \begin{array}{cc} \xi_{N}^{j_{d+1}},\ldots,\xi_{N}^{j_{1}} \\ n_{d},\ldots,n_{1} 
\end{array} \big) = m^{n_{d}+\ldots+n_{1}} \sum_{0<m_{1}<\ldots<m_{d}<m} \frac{\big( \frac{\xi_{N}^{j_{2}}}{\xi_{N}^{j_{1}}}\big)^{m_{1}} \ldots \big( \frac{\xi_{N}^{j_{d+1}}}{\xi_{N}^{j_{d}}}\big)^{m_{d}} \big(\frac{1}{\xi_{N}^{j_{d+1}}}\big)^{n}} {m_{1}^{n_{1}} \ldots m_{d}^{n_{d}}}
\end{equation}

\noindent These are, up to a multiplicative factor in $m^{\mathbb{Z}}$, the coefficients of the power series expansions at $0$ of $p$-adic hyperlogarithms, defined as iterated integrals of $p^{\alpha}\omega_{0}(z),p^{\alpha}\omega_{\xi_{N}^{1}}(z),\ldots,p^{\alpha}\omega_{\xi_{N}^{N}}(z)$, i.e. as flat sections of $\nabla^{\mu_{N}}_{\KZ}$, by Coleman integration in the sense of \cite{Coleman}, \cite{Besser}, \cite{Vologodsky}.
\newline 
\newline In \cite{I-1}, we have solved directly the differential equation satisfied by the Frobenius. A key intermediate object for the computation was a regularized version of iterated integrals of the following differential forms
\begin{equation} \label{eq:differential forms}
p^{\alpha}\omega_{0}(z) = \omega_{0}(z^{p^{\alpha}}),p^{\alpha}\omega_{\xi_{N}^{1}}(z),\ldots,p^{\alpha}\omega_{\xi_{N}^{N}}(z),\omega_{\xi_{N}^{1}}(z^{p^{\alpha}}),\ldots,\omega_{\xi_{N}^{N}}(z^{p^{\alpha}})
\end{equation}
\noindent The regularized iterated integrals of (\ref{eq:differential forms}) are overconvergent analytic functions on the rigid analytic space $(\mathbb{P}^{1,\text{an}} - \cup_{i=1}^{N} \{z \text{ }|\text{ }|z-\xi_{N}^{i}|_{p}<1\})/ K_{N}$.
The sequences of coefficients of the power series expansions of the iterated integrals of the forms (\ref{eq:differential forms}) are, up to a simple multiplicative factor, the "multiple harmonic sums with congruences" modulo powers of $p$, at roots of unity of order $N$ prime to $p$ : namely, for any $n \in \mathbb{N}^{\ast}$, for any index as in (\ref{eq: multiple harmonic sums}), $a \in \mathbb{N}^{\ast}$ and $I \subset \{1,\ldots,d\}$ :
\begin{equation} \label{eq:multiple harmonic sums with congruences}
\har^{\mod p^{a}}_{n,I} \big( \begin{array}{cc} \xi_{N}^{j_{d+1}},\ldots,\xi_{N}^{j_{1}} \\ s_{d},\ldots,s_{1}
\end{array} \big) = n^{s_{d}+\ldots+s_{1}} \sum_{\substack{0<n_{1}<\ldots<n_{d}<n \\ \text{ for all i in I,}\text{ }n_{i} \equiv n_{i+1} \mod p^{a}}} \frac{\big( \frac{\xi_{N}^{j_{2}}}{\xi_{N}^{j_{1}}}\big)^{n_{1}} \ldots \big( \frac{\xi_{N}^{j_{d+1}}}{\xi_{N}^{j_{d}}}\big)^{n_{d}} \big(\frac{1}{\xi_{N}^{j_{d+1}}}\big)^{n}} {n_{1}^{s_{1}} \ldots n_{d}^{s_{d}}}
\end{equation}

\noindent In \cite{I-2} and \cite{I-3}, we have made a distinction between three frameworks of computations : 
\newline $\bullet$ the "framework $\int_{0}^{1}$" which involves the scheme $\pi_{1}^{\un,\DR}(X_{K_{N},N},-\vec{1}_{1},\vec{1}_{0})$ and operations on it.
\newline $\bullet$ the "framework $\int_{0}^{z<<1}$" which involves multiple harmonic sums viewed as coefficients of power series expansions of hyperlogarithms and operations on $\pi_{1}^{\un,\DR}(X_{K_{N},N})$.
\newline $\bullet$ the "framework $\Sigma$" which involves multiple harmonic sums viewed as elementary finite iterated sums via their formula (\ref{eq: multiple harmonic sums}).
\newline In \cite{I-2} and \cite{I-3}, we have showed using \cite{I-1} that the differential equation of the Frobenius had a simplification in a certain "limit", in which it was equivalent to a relation between weighted multiple harmonic sums and adjoint $p$-adic multiple zeta values at roots of unity (abreviated $\Ad p$MZV$\mu_{N}$'s, and denoted by $\zeta_{p,\alpha}^{\Ad}(w)$), which are variants of $p$MZV$\mu_{N}$'s equivalent to them up to a polynomial base-change defined over $\mathbb{Q}$. We called  \emph{harmonic Frobenius} (standing for "incarnation of the Frobenius which is natural from the point of view of multiple harmonic sums"), the corresponding "limit" of the Frobenius. The harmonic Frobenius is sufficient to reconstruct the whole of the Frobenius.
\newline We have computed indirectly the harmonic Frobenius, i.e. we have expressed it in two different ways, one of them explicit in terms of multiple harmonic sums in the framework $\Sigma$, the other one in terms of $\Ad p$MZV$\mu_{N}$'s in the framework $\int_{0}^{z<<1}$ or $\int_{0}^{1}$ and we showed that the two expressions can be identified.
\newline The results were expressed by the following objects. We have defined "\emph{$p$-adic harmonic Ihara actions}", $\circ_{\har}^{\smallint_{0}^{1}}$, $\circ_{\har}^{\smallint_{0}^{z<<1}}$, $\circ_{\har}^{\Sigma}$. These are continuous group actions, related to the Ihara bracket on $\Lie \pi_{1}^{\un,\DR}(X_{K},\vec{1}_{0})$ , of a complete topological subgroup of $\pi_{1}^{\un,\DR}(X_{K},\vec{1}_{0})(K)$ equipped with an appropriate topology, on a complete space containing as an element the generating sequence of all multiple harmonic sums. We have defined "\emph{maps of comparison between series and integrals}" $\comp^{\Sigma \leftarrow \smallint}$ and $\comp^{\smallint \leftarrow \Sigma}$ relating the harmonic Ihara actions to each other. We also have defined in \cite{I-3} two maps $\iter_{\har}^{\smallint}$ and $\iter_{\har}^{\Sigma}$ of "iterations of the harmonic Frobenius". We have written fully explicit formulas for all these objects. The explicit formulas for $p$MZV$\mu_{N}$'s amount then to say that the generating series of $\Ad p$MZV$\mu_{N}$'s is the image of the one of prime weighted multiple harmonic sums by the map $\comp^{\smallint \leftarrow \Sigma}$.
\newline 
\newline The flat sections of the connection $\nabla_{\KZ}^{\mu_{p^{\alpha}N}}$ on $\pi_{1}^{\un,\DR}(X_{K_{p^{\alpha}N},p^{\alpha}N})$ are the iterated integrals of the differential forms $\frac{dz}{z}$ and $\frac{dz}{z - \rho\xi}$, with $\rho$ a $p^{\alpha}$-th root of unity and $\xi$ an $N$-th root of unity. They are called hyperlogarithms as in the case of $\pi_{1}^{\un,\DR}(X_{K_{N},N})$. The coefficients of their power series expansions at $0$ are up to a multiplicative factor in $m^{\mathbb{Z}}$, the weighted multiple harmonic sums at roots of unity of order $p^{\alpha}N$ :
 
\begin{equation}
\label{eq:ramified multiple harmonic sums}
\har_{m} \big( \begin{array}{cc} \xi_{N}^{j_{d+1}}\rho_{a}^{j'_{d+1}},\ldots,\xi_{N}^{j_{1}}\rho_{a}^{j'_{1}} \\ n_{d},\ldots,n_{1}
\end{array} \big) = m^{n_{d}+\ldots+n_{1}} \sum_{0<m_{1}<\ldots<m_{d}<m} \frac{\big( \frac{\xi_{N}^{j_{2}}\rho_{a}^{j'_{2}}}{\xi_{N}^{j_{1}}\rho_{a}^{j'_{1}}}\big)^{m_{1}} \ldots \big( \frac{\xi_{N}^{j_{d+1}}\rho_{a}^{j'_{d+1}}}{\xi_{N}^{j_{d}}\rho_{a}^{j'_{d}}}\big)^{m_{d}} \big(\frac{1}{\xi_{N}^{j_{d+1}}\rho_{a}^{j'_{d+1}}}\big)^{m}}{m_{1}^{n_{1}} \ldots m_{d}^{n_{d}}}
\end{equation}
 
\noindent with $\alpha \in \mathbb{N}^{\ast}$ and $N \in \mathbb{N}^{\ast}$ prime to $p$, and $\rho_{p^{\alpha}}$ a primitive $p^{\alpha}$-th root of unity in $K_{p^{\alpha}N}$, $m \in \mathbb{N}^{\ast}$, $\big( \begin{array}{cc} \xi_{N}^{j_{d+1}},\ldots,\xi_{N}^{j_{1}} \\ n_{d},\ldots,n_{1} 
\end{array} \big)$ as in (\ref{eq: multiple harmonic sums}) and $j'_{1},\ldots,j'_{d+1} \in \mathbb{Z}/p^{\alpha}\mathbb{Z}$.

\subsection{}

We are going to show that there exists a canonical relation between $\pi_{1}^{\un,\DR}(X_{K_{p^{\alpha}N},p^{\alpha}N})$ and $\pi_{1}^{\un,\DR}(X_{K_{N},N}^{(p^{\alpha})})$, defined over $K_{p^{\alpha}N}$, which extends the Frobenius of  $\pi_{1}^{\un,\DR}(X_{K_{N},N})$ (which is an isomorphism between $\pi_{1}^{\un,\DR}(X_{K_{N},N})$ and $\pi_{1}^{\un,\DR}(X_{K_{N},N}^{(p^{\alpha})})$). The first immediate application is to define a notion of adjoint $p$-adic multiple zeta values at $p^{\alpha}N$-th roots of unity ($\Ad$ $p$MZV$\mu_{p^{\alpha}N}$'s). We are also going to show that all the tools of computation of the Frobenius of part I, reviewed in \S1.4, can be generalized to this framework, giving a computation of the Frobenius extended to $\pi_{1}^{\un,\DR}(X_{K_{p^{\alpha}N},p^{\alpha}N})$, and in particular of ($\Ad$ $p$MZV$\mu_{p^{\alpha}N}$'s).
\newline 
\newline \textbf{Theorem-Definition V-1.a : the Frobenius extended to $\pi_{1}^{\un,\DR}(\mathbb{P}^{1} - \{0,\mu_{p^{\alpha}N},\infty\})$ ; adjoint $p$-adic multiple zeta values at $p^{\alpha}N$-th roots of unity}
\newline There exist a unique 
element of $\mathcal{A}^{\dagger}(U_{N}) \otimes_{K_{N}} K_{p^{\alpha}N} \langle \langle e_{0},(e_{\rho_{p^{\alpha}}^{i}\xi_{N}^{j}})_{\substack{i=1,\ldots,p^{\alpha} \\ j=1,\ldots,N}}\rangle\rangle \times K_{p^{\alpha}N} \langle \langle e_{0},(e_{\rho_{p^{\alpha}}^{i}\xi_{N}^{j}})_{\substack{i=1,\ldots,p^{\alpha} \\ j=1,\ldots,N}}\rangle\rangle^{N}$, denoted by $(\Li_{p,\alpha}^{\dagger,\mu_{p^{\alpha}N}},\psi^{\mu_{p^{\alpha}N}}_{\xi_{N}^{1}},\ldots,\psi^{\mu_{p^{\alpha}N}}_{\xi_{N}^{N}})$, which satisfies the following extension of the differential equation of the Frobenius 
$$ d\Li_{p,\alpha}^{\dagger,\mu_{p^{\alpha}N}} = \bigg( p^{\alpha}\omega_{0}(z)e_{0} + \sum_{\rho,\xi} \omega_{\rho\xi}(z) e_{\rho\xi} \bigg)\Li_{p,\alpha}^{\dagger,\mu_{p^{\alpha}N}} - \Li_{p,\alpha}^{\dagger,\mu_{p^{\alpha}N}} \bigg( p^{\alpha}\omega_{0}(z)e_{0} + \sum_{j=1}^{N} \omega_{\xi^{p^{\alpha}}}(z^{p^{\alpha}})\psi_{\xi_{N}^{j}}^{\mu_{p^{\alpha}N}} \bigg) $$
\noindent and $\Li_{p,\alpha}^{\dagger,\mu_{p^{\alpha}N}}(0)=1$ and a certain regularity property of $\Li_{p,\alpha}^{\dagger,\mu_{p^{\alpha}N}}$ (see \S4.3).
\newline Moreover, $(\Li_{p,\alpha}^{\dagger,\mu_{p^{\alpha}N}},\psi^{\mu_{p^{\alpha}N}}_{\xi_{N}^{1}},\ldots,\psi^{\mu_{p^{\alpha}N}}_{\xi_{N}^{N}})$ satisfies an extension of the bounds of valuation and the formulas of the direct computation proved in \cite{I-1}. For all words $w$, we denote by
$$ \zeta_{p,\alpha}^{\Ad}(w) = \psi_{1}^{\mu_{p^{\alpha}N}}[w] $$
\noindent and call these numbers the adjoint $p$-adic multiple zeta values at $p^{\alpha}N$-th roots unity ($\Ad$ $p$MZV$\mu_{p^{\alpha}N}$'s).
\newline This also enables to define (non-adjoint) $p$-adic multiple zeta values at $p^{\alpha}N$-th roots unity ($p$MZV$\mu_{p^{\alpha}N}$'s) denoted by $\zeta_{p,\alpha}(w)$.
\newline 
\newline 
\textbf{Theorem-Definition V-1.b : the harmonic Frobenius, harmonic Ihara actions, iterations of the harmonic Frobenius and comparison maps extended to $\pi_{1}^{\un,\DR}(\mathbb{P}^{1} - \{0,\mu_{p^{\alpha}N},\infty\})$}
\newline There exist
\newline a) harmonic Ihara actions at $p^{\alpha}N$-th roots of unity
\noindent\newline 
$$ \circ_{\har}^{\int_{0}^{1}} :
K_{p^{\alpha}N}\langle\langle e_{\{0\} \cup \mu_{p^{\alpha}N}} \rangle\rangle_{S}
\times \Map(\mathbb{N},
K_{p^{\alpha}N} \langle\langle e_{\{0\} \cup \mu_{N}} \rangle\rangle_{\har}^{\smallint_{0}^{1}} )
\rightarrow 
\Map(\mathbb{N},K_{p^{\alpha}N}\langle\langle  e_{\{0\} \cup \mu_{p^{\alpha}N}}  \rangle\rangle_{\har}^{\smallint_{0}^{1}} ) $$
$$ \circ_{\har}^{\smallint_{0}^{z<<1}} :
K_{p^{\alpha}N}\langle\langle e_{\{0\} \cup \mu_{p^{\alpha}N}} \rangle\rangle_{S}
\times \Map(\mathbb{N},
K_{p^{\alpha}N} \langle\langle e_{\{0\} \cup \mu_{N}} \rangle\rangle_{\har}^{\smallint_{0}^{z<<1}} )
\rightarrow 
\Map(\mathbb{N},K_{p^{\alpha}N}\langle\langle  e_{\{0\} \cup \mu_{p^{\alpha}N}}  \rangle\rangle_{\har}^{\smallint_{0}^{z<<1}} ) $$
$$ \circ_{\har}^{\Sigma} : 
K_{p^{\alpha}N}\langle\langle e_{\{0\} \cup \mu_{p^{\alpha}N}} \rangle\rangle_{S}
\times \Map(\mathbb{N},
K_{p^{\alpha}N} \langle\langle e_{\{0\} \cup \mu_{N}} \rangle\rangle_{\har}^{\Sigma} )
\rightarrow 
\Map(\mathbb{N},K_{p^{\alpha}N}\langle\langle  e_{\{0\} \cup \mu_{p^{\alpha}N}}  \rangle\rangle_{\har}^{\Sigma}) $$
\newline whose restrictions to $K_{p^{\alpha}N}\langle\langle e_{\{0\} \cup \mu_{p^{\alpha}N}} \rangle\rangle_{S}
\times \Map(\mathbb{N},
K_{p^{\alpha}N} \langle\langle e_{\{0\} \cup \mu_{N}} \rangle\rangle_{\har}$ are equal to the harmonic Ihara actions defined in part I, given by explicit formulas
\newline b) comparison maps between sums and integrals at $p^{\alpha}N$-th roots of unity
$$ \comp^{\Sigma \leftarrow \smallint} K_{p^{\alpha}N} \langle\langle e_{0 \cup \mu_{p^{\alpha}N}} \rangle\rangle_{\har}^{\Sigma})_{S} \rightarrow K_{p^{\alpha}N} \langle\langle e_{0 \cup \mu_{p^{\alpha}N}} \rangle\rangle_{S} $$
$$ \comp^{\Sigma \leftarrow \smallint} : K_{p^{\alpha}N} \langle\langle e_{0 \cup \mu_{p^{\alpha}N}} \rangle\rangle_{\har}^{\Sigma})_{S} \rightarrow K_{p^{\alpha}N} \langle\langle e_{0 \cup \mu_{p^{\alpha}N}} \rangle\rangle_{S} $$
\noindent whose restrictions to $K_{N}\langle \langle e_{0 \cup \mu_{N}} \rangle\rangle_{S}$ are the comparison maps defined in part I, and given by explicit formulas,
\newline c) maps of iteration of the harmonic Frobenius at $p^{\alpha}N$-th roots of unity :
$$ (\widetilde{\text{iter}}_{\har}^{\smallint_{0}^{1}})^{\textbf{a},\Lambda} : K_{p^{\alpha}N}\langle\langle e_{0 \cup \mu_{p^{\alpha}N}} \rangle\rangle_{S} \rightarrow K_{p^{\alpha}N}[[\Lambda^{\textbf{a}}]][\textbf{a}](\Lambda)\langle\langle e_{0 \cup \mu_{p^{\alpha}N}} \rangle\rangle^{\smallint_{0}^{1}}_{\har} $$
$$ (\widetilde{\text{iter}}_{\har}^{\Sigma})^{\textbf{a},\Lambda} : (K_{p^{\alpha}N} \langle\langle e_{0 \cup \mu_{p^{\alpha}N}}\rangle \rangle_{\har}^{\Sigma})_{S} \rightarrow K_{p^{\alpha}N}[[\Lambda^{\textbf{a}}]][\textbf{a}](\Lambda)\langle\langle e_{0 \cup \mu_{p^{\alpha}N}} \rangle\rangle^{\Sigma}_{\har} $$
\noindent whose restrictions to $K_{N}\langle\langle e_{0 \cup \mu_{N}}\rangle\rangle_{S}$, resp. $K_{N} \langle\langle e_{0 \cup \mu_{N}} \rangle \rangle_{\har}^{\Sigma})_{S}$ are the maps of iteration defined in part I, and given by explicit formulas,
\newline such that we have a simple isomorphism between $\circ_{\har}^{\smallint_{0}^{1}}$ and $\circ_{\har}^{\smallint_{0}^{z<<1}}$, and the equalities
$$ \har_{p^{\alpha}\mathbb{N}}^{\mu_{p^{a}N}} = \psi_{1}^{p^{\alpha}N} \text{ } \circ_{\har}^{\smallint_{0}^{z<<1}} \text{ }\har_{\mathbb{N}}^{\mu_{N}^{(p^{\alpha})}} $$
$$ \har_{p^{\alpha}\mathbb{N}}^{\mu_{p^{a}N}} = \har_{p^{\alpha}}^{\mu_{p^{\alpha}N}} \text{ } \circ_{\har}^{\Sigma} \text{ }\har_{\mathbb{N}}^{\mu_{N}^{(p^{\alpha})}} $$
$$ \comp^{\Sigma \leftarrow \smallint} \circ \comp^{\smallint \leftarrow \Sigma} = \id $$
$$ \har_{p^{\alpha}} = \comp^{\Sigma \leftarrow \smallint} \psi_{1}^{\mu_{p^{\alpha}N}} $$
$$ \psi_{1}^{\mu_{p^{\alpha}N}} = \comp^{\smallint \leftarrow \Sigma} \har_{p^{\alpha}} $$
\noindent and at words $w$ such that $\frac{\alpha}{\alpha_{0}} > \depth(w)$, with $(\text{iter}_{\har}^{\smallint_{0}^{1}})^{\frac{\tilde{\alpha}}{\tilde{\alpha}_{0}},q^{\tilde{\alpha}_{0}}} =  (\widetilde{\text{iter}}_{\har}^{\smallint_{0}^{1}})^{\textbf{a},\Lambda} \mod (\textbf{a}-\frac{\alpha}{\alpha_{0}},\Lambda-q^{\tilde{\alpha}_{0}})$ and  $(\text{iter}_{\har}^{\Sigma})^{\frac{\alpha}{\alpha_{0}},q^{\tilde{\alpha}_{0}}} = \widetilde{\text{iter}}_{\har,\Sigma}^{\textbf{a},\Lambda} \mod (\textbf{a}-\frac{\tilde{\alpha}}{\tilde{\alpha}_{0}},\Lambda-q^{\tilde{\alpha}_{0}})$ :
$$ \har^{\mu_{q^{\tilde{\alpha}}N},B}_{q^{\tilde{\alpha}}} = (\iter_{\har}^{\smallint_{0}^{1}})^{\frac{\tilde{\alpha}}{\tilde{\alpha}_{0}},q^{\tilde{\alpha}_{0}}}(\psi_{1}^{\mu_{q^{\tilde{\alpha}}N},B}) $$
$$ \har^{\mu_{q^{\tilde{\alpha}}N},B}_{q^{\tilde{\alpha}}} = \iter_{\har,\Sigma}^{\frac{\tilde{\alpha}}{\tilde{\alpha}_{0}},q^{\tilde{\alpha}_{0}}} (\har^{\mu_{q^{\tilde{\alpha}}N},B}_{q^{\tilde{\alpha}_{0}}}) $$
\noindent These results are a new illustration of the interest of considering all the iterates of the Frobenius rather than only the Frobenius : by considering the Frobenius iterated $\alpha$ times and extending it canonically, we can define generalizations of adjoint $p$-adic multiple zeta values at roots of unity of order $p^{\alpha}N$ : thus by making $N$ and $\alpha$ vary, we attain any possible order in $\mathbb{N}^{\ast}$.

\subsection{} We are actually going to prove a slightly more general result than Theorem-Definition V-1.a and Theorem-Definition V-1.b.
\newline We consider the differential forms $\frac{dz}{z}$ and $\frac{dz}{z - \xi_{N}^{j}}$, $j=1,\ldots,N$. Our main observation will be the following : the whole of part I remains true if we replace $\frac{dz}{z - \xi_{N}^{j}} = - \xi_{N}^{-j}\sum_{m \geq 1} \xi_{N}^{-jm} z^{m}dz$ by, more generally, $- \xi_{N}^{-j} \sum_{m \geq 1} f(m \mod p^{\alpha}) \xi_{N}^{-jm} z^{m}$, with $f$ is any function $\mathbb{Z}/p^{\alpha}\mathbb{Z} \rightarrow \mathbb{C}_{p}$ with values having not too big $p$-adic norm, let us say with values in $\mathcal{O}_{\mathbb{C}_{p}}$. If we choose $f : m \mapsto \rho_{p^{\alpha}}^{jm}$, with $\rho_{p^{\alpha}}$ a primitive $p^{\alpha}$-th root of unity, we obtain the canonical basis of $H^{1,\DR}(\mathbb{P}^{1} - \{0,\mu_{p^{\alpha}N},\infty\})$ which was implicit above. However, any choice of basis would work, and there is another choice which is interesting for us : $f : m \mapsto 1_{m \equiv m_{0} \mod p^{\alpha}}$, for $m_{0} \in \mathbb{Z}/p^{\alpha}\mathbb{Z}$ (where 1 denotes the characteristic function). Thus, we will also prove :
\newline 
\newline \textbf{Theorem-Definition V-1.c : compatibility with the change of basis of $H^{1,\DR}(\mathbb{P}^{1} - \{0,\mu_{p^{\alpha}N},\infty\})$ and adjoint $p$-adic multiple zeta values at $N$-th roots of unity with congruences modulo $p^{\alpha}$}
\newline i) In Theorem-Definitions V-1.a and V-1.b, let us replace the canonical basis of $H^{1,\DR}(X_{K_{p^{\alpha}N},p^{\alpha}N})$, which is implicit, by any other basis obtained in the way above from the canonical basis of $H^{1,\DR}(X_{K_{N},N})$ and a basis of $\Map(\mathbb{Z}/p^{\alpha}\mathbb{Z}, \mathcal{O}_{\mathbb{C}_{p}})$. Then, the Theorem-Definitions V-1.a and V-1.b remain valid.
\newline ii) Let two different bases $B$ and $B'$ of $H^{1,\DR}(\mathbb{P}^{1} - \{0,\mu_{p^{\alpha}N},\infty\})$ as in i) such that the matrix decomposing $B$ in $B'$ has coefficients in  $\mathcal{O}_{\mathbb{C}_{p}}$. 
\newline Then the automorphism of $H^{1,\DR}(\mathbb{P}^{1} - \{0,\mu_{p^{\alpha}N},\infty\})$ sending $B'$ to $B$ induces functorially a morphism between the generalizations of regularized iterated integrals, harmonic Ihara actions, iterations of the harmonic Frobenius associated respectively with $B$ and $B'$ by i).
\newline iii) Let us choose the basis of $H^{1,\DR}(\mathbb{P}^{1} - \{0,\mu_{p^{\alpha}N},\infty\})$ arising from the maps, $f : m \mapsto 1_{m \equiv m_{0} \mod p^{\alpha}}$, for $m_{0} \in \mathbb{Z}/p^{\alpha}\mathbb{Z}$. The analogs of the objects defined in Theorem-Definitions V-1.a and V-1.b associated with this basis are called as in Theorem-Definitions V-1.a and V-1.b except that "at $p^{\alpha}N$-th roots of unity" is replaced by "at $N$-th roots of unity with congruences modulo $p^{\alpha}$".

\subsection{}

\noindent Extending to $\pi_{1}^{\un,\DR}(X_{K_{p^{\alpha}N},p^{\alpha}N})$ the Frobenius of $\pi_{1}^{\un,\DR}(X_{K_{N},N})$ breaks a dissymetry between the indirect method and the direct method to compute it : in the direct method \cite{I-1}, we had to use iterated integrals of the set of differential forms $\{ \frac{dz}{z},\frac{dz}{z-\xi_{N}^{j}}, j=1,\ldots,N,\frac{d(z^{p^{\alpha}})}{z^{p^{\alpha}}-(\xi_{N}^{p^{\alpha}})^{j'}},j'=1,\ldots,N\}$, which involves at the same time
$(\pi_{1}^{\un,\DR}(X_{K_{N},N}),\nabla_{\KZ})$ and its-pull back $(\pi_{1}^{\un,\DR}(X_{K_{N},N})^{(p^{\alpha})},\nabla_{\KZ}^{(p^{\alpha})})$ ; whereas, in the indirect methods \cite{I-2} and \cite{I-3}, $(\pi_{1}^{\un,\DR}(X_{K_{N},N}),\nabla_{\KZ})$ and  $(\pi_{1}^{\un,\DR}(X_{K_{N},N})^{(p^{\alpha})},\nabla_{\KZ}^{(p^{\alpha})})$ were separated from each other, appearing in different terms of the equations.
\newline In the new context, $\{ \frac{dz}{z},\frac{dz}{z-\xi_{N}^{j}}, j=1,\ldots,N,\frac{d(z^{p^{\alpha}})}{z^{p^{\alpha}}-(\xi_{N}^{p^{\alpha}})^{j'}},j'=1,\ldots,N\}$ generates, both in the indirect and direct methods of computation, a subspace of the space of differential forms under consideration. Thus this context seems to be a natural framework for relating the direct and the indirect methods of computation of the Frobenius.
\newline 
\newline \textbf{Proposition V-1.d : Descent from $\pi_{1}^{\un,\DR}(\mathbb{P}^{1} - \{0,\mu_{p^{\alpha}N},\infty\})$ to $\pi_{1}^{\un,\DR}(\mathbb{P}^{1} - \{0,\mu_{N},\infty\})$}
\newline (see \S9 for a more precise statement) The regularization, the harmonic Ihara actions, the iterations of the harmonic Frobenius and the comparison maps, when restricted to the subspace of iterated integrals of $\{ p^{\alpha}\frac{dz}{z},p^{\alpha}\frac{dz}{z-\xi_{N}^{j}}, j=1,\ldots,N,\frac{d(z^{p^{\alpha}})}{z^{p^{\alpha}}-(\xi_{N}^{p^{\alpha}})^{j'}},j'=1,\ldots,N\}$, have coefficients are defined within $\pi_{1}^{\un,\DR}(X_{K_{N},N})$, i.e. convergent infinite summations of linear combinations of prime weighted multiple harmonic sums of $\pi_{1}^{\un,\DR}(X_{K_{N},N})$.
\newline 
\newline \textbf{Proposition V-1.e : the regularization and the iteration of the harmonic Frobenius}
\newline (see \S9 for a more precise statement) The regularization of iterated integrals on $X_{K_{p^{\alpha}N},p^{\alpha}N}$ defined in Theorem-Definition V.1-a can be computed by using the iteration of the harmonic Frobenius defined in Theorem-Definition V.1-b.
\newline 
\newline \textbf{Conclusion :} Via Theorem-Definition V-1.c, Proposition V-1.e and Proposition V-1.d give a connection between the formulas of the direct \cite{I-1} and indirect \cite{I-2} \cite{I-3} computations of the Frobenius, as well as a meaning of this connection in terms of a descent of the Frobenius extended to $\pi_{1}^{\un,\DR}(\mathbb{P}^{1} - \{0,\mu_{p^{\alpha}N},\infty\})$.
\newline 
\newline The present version of this paper is preliminary. In the next version, we will write explicit formulas for all the maps involved (they are not significatively different from the formulas for the maps of part I) and we will write commutative diagrams interconnecting them. This will finish to state the compatibility between the direct computation of \cite{I-1} and the indirect computations of \cite{I-2} and \cite{I-3}.
\newline Finally, these facts also have an algebraic meaning and will be related, for example, to the fact that the spaces of multiple harmonic sums with congruences of (\ref{eq:multiple harmonic sums with congruences}) are stable by the double shuffle relations. This will be explained in \cite{V-2}.

\subsection{\label{paragraph generic}}

The previous considerations also suggest to retrieve the maximal versions of the results of part I, relatively to curves $\mathbb{P}^{1}$ - punctures (one can also ask for the maximality relatively to the terms of multiple harmonic sums : in the framework $\Sigma$, this is treated, in part I and this paper, by remarking that it is possible to replace in most computations the maps $n_{i} \mapsto \frac{1}{n_{i}^{s_{i}}}$ by certain locally analytic group homomorphisms $K^{\times} \rightarrow K^{\times}$).
\newline We propose an element of answer in the Appendix A. Let us consider a more general $\pi_{1}^{\un,\DR}(\mathbb{P}^{1} - \{0=z_{0},z_{1},\ldots,z_{r},\infty\})$ over a complete normed field $K$ of characteristic $0$, with $z_{1},\ldots,z_{r}$ of norm $1$. It is still equipped with the similar connection $\nabla_{\KZ} : f \mapsto f^{-1}(df - \sum_{i=0}^{r} \frac{dz}{z-z_{i}}e_{z_{i}})$. The flat sections of $\nabla_{\KZ}$, called hyperlogarithms, have the power series expansions of their flat sections expressed in terms of multiple harmonic sums : for $d \in \mathbb{N}^{\ast}$, $j_{1},\ldots,j_{d} \in \{1,\ldots,r\}$, and $n_{d},\ldots,n_{1} \in \mathbb{N}^{\ast}$,
\begin{equation} \label{eq:generic multiple harmonic sums} 
\har_{m} \big(\begin{array}{c} z_{j_{d+1}},\ldots,z_{j_{1}} \\ n_{d},\ldots,n_{1} \end{array} \big) = m^{n_{d}+\ldots+n_{1}} \sum_{0<m_{1}<\ldots<m_{d}<m} \frac{\big( \frac{z_{j_{2}}}{z_{j_{1}}}\big)^{m_{1}} \ldots \big( \frac{z_{j_{d+1}}}{z_{j_{d}}}\big)^{m_{d}}\big( \frac{1}{z_{j_{d+1}}}\big)^{m}}{m_{1}^{n_{1}} \ldots m_{d}^{n_{d}}} \in K
\end{equation}
\noindent For the next results, we do not claim a crystalline interpretation similar to the previous ones (let us recall that $z \mapsto z^{p}$ stabilizes sets of roots of unity but not a generic subset $z_{1},\ldots,z_{r}$ of $K$).
\newline 
In Appendix A, we construct a generalization of the $\Sigma$-harmonic Ihara action and the equations that it satisfies to this setting, provided certain hypothesis on $z_{1},\ldots,z_{r}$. By viewing $z_{1},\ldots,z_{r}$ as variables, this leads to a notion of $p$-adic pseudo adjoint multiple polylogarithms.
\newline Our strategy consists in writing (if $K=\mathbb{C}_{p}$ or more generally if this is possible) $z_{i}=\omega(z_{i})+\epsilon_{i}$ with $\omega(z_{i})$ a root of unity of order prime to $p$ and $|\epsilon_{i}|_{p}<1$, and writing a power series expansions with respect to $\epsilon_{i}$'s ; the computation works when $|\epsilon_{i}|_{p}<p^{-\frac{1}{p-1}}$.
\newline We could use the strategy of Appendix A to generalize the rest of this paper to $\pi_{1}^{\un,\DR}(\mathbb{P}^{1} - \{0=z_{0},z_{1},\ldots,z_{r},\infty\})$.
\newline The qualitative difference between the results for $\pi_{1}^{\un,\DR}(\mathbb{P}^{1} - \{0,\mu_{p^{\alpha}N},\infty\})$ and  $\pi_{1}^{\un,\DR}(\mathbb{P}^{1} - \{0=z_{0},z_{1},\ldots,z_{r},\infty\})$ illustrate the particularity of the case of $\pi_{1}^{\un,\DR}(\mathbb{P}^{1} - \{0,\mu_{p^{\alpha}N},\infty\})$ and the meaning of the results for that case.

\subsection{Related work}
\noindent 
\newline $\bullet$ Aside from our papers \cite{I-1}, \cite{I-2}, \cite{I-3}, our two notes of anouncement \cite{J N1} and \cite{J N2} which announce some parts of \cite{I-1} and \cite{I-2}, and the present paper, the question of computing of the Frobenius of $\pi_{1}^{\un,\crys}(X_{k_{N},N})$ or very closely related questions appear, to our knowledge, in the work of Deligne \cite{Deligne}, \S19.6, Besser and de Jeu \cite{Besser de Jeu}, Unver \cite{U1}, \cite{U2}, \cite{U3}, \cite{U4}, Yamashita \cite{Yamashita} \S3, Dan-Cohen and Chatzistamatiou \cite{Dan-Cohen}.
\newline $\bullet$ Our results give new proofs, explicit formulas and generalizations to the Proposition 2.9 in \cite{U4}. The main result of \cite{U4} (Theorem 1.1) is proved by our paper \cite{I-1} and generalized in this paper. One can join Proposition 2.5 and Proposition 2.9 of \cite{U4} into a statement giving an explicit basis of a certain space of multiple harmonic sums with congruences (which we interpret as a space of iterated integrals over $X_{K_{p^{\alpha}N},p^{\alpha}N}$) and a decomposition of each element of the space in the basis (we interpret the elements of the basis as iterated integrals over $X_{K_{N},N}$). The results of \cite{U1}, \cite{U2}, \cite{U3} are included in those of \cite{U4}.
\newline $\bullet$ The $p$-adic iterated integrals on certain subspaces of $\mathbb{P}^{1}$ of the differential forms $\frac{dz}{z}$ and $\frac{dz}{z-\rho}$, where $\rho$ is a root of unity of order not necessarily prime to $p$, in particular values at $1$ of some $p$-adic multiple polylogarithms defined by certain such iterated integrals, and certain linear combinations of them, including the ones corresponding to multiple harmonic sums with congruences, are studied by Furusho, Komori, Matsumoto, Tsumura in \cite{FKMT}, for different purposes, concerning the generalizations with several variables of the Kubota-Leopoldt $p$-adic zeta function defined in \cite{FKMT}.

\subsection*{Acknowledgments}

This work has been achieved at Universit\'{e} Paris Diderot and Universit\'{e} de Strasbourg, supported by ERC grant 257638 and Labex IRMIA.

\section{Review on $\pi_{1}^{\un,\DR}$ and $\pi_{1}^{\un,\crys}$}

We review $\pi_{1}^{\un,\DR}(X_{K})$ where $X_{K}$ is of the form $\mathbb{P}^{1} - \{0,z_{1},\ldots,z_{r},\infty\}$ over a (complete normed) field $K$ of characteristic zero, and then $\pi_{1}^{\un,\crys}(X_{k_{N},N})$, viewed as $\pi_{1}^{\un,\DR}(X_{K_{N},N})$ equipped with the Frobenius, where $N$, $k_{N}$, $K_{N}$ are as in \S1.1. For more details, in particular about the definitions, see \cite{I-1} \S2.

\subsection{$\pi_{1}^{\un,\DR}(\mathbb{P}^{1} - \{0,z_{1},\ldots,z_{r},\infty\}\text{ }/\text{ }K)$}

Let $K$ be any field of characteristic zero, and $X_{K} = \mathbb{P}^{1} - \{0,z_{1},\ldots,z_{r},\infty\}\text{ }/\text{ }K$, where $r \in \mathbb{N}$ and $z_{1},z_{2},\ldots,z_{r} \in K$ ; let $z_{0}=0$, and $e_{\{z_{0},\ldots,z_{r}\}}$ the alphabet $\{e_{0},e_{z_{1}},\ldots,e_{z_{r}}\}$.

\begin{Proposition-Definition} Let $\mathcal{O}^{\sh,e_{\{z_{0},\ldots,z_{r}\}}}$ be the $\mathbb{Q}$-vector space $\mathbb{Q}\langle e_{\{z_{0},\ldots,z_{r}\}} \rangle= \mathbb{Q}\langle e_{0},e_{z_{1}},\ldots,e_{z_{r}}\rangle$, freely generated by words on $e_{\{z_{0},\ldots,z_{r}\}}$ including the empty word. The following operations i) to iv) make it into a graded Hopf algebra over $\mathbb{Q}$, where the grading is the number of letters (called the weight) of words :
\newline i) the shuffle product $\sh:\mathcal{O}^{\sh,e_{\{z_{0},\ldots,z_{r}\}}}\otimes \mathcal{O}^{\sh,e_{\{z_{0},\ldots,z_{r}\}}} \rightarrow \mathcal{O}^{\sh,e_{\{z_{0},\ldots,z_{r}\}}}$ defined by, for all words :
\begin{center} $(e_{z_{i_{l+l'}}}\ldots e_{z_{i_{l+1}}})\text{ }\sh\text{ }(e_{z_{i_{l}}} \ldots e_{z_{i_{1}}}) =
\sum_{\substack{\sigma \text{ permutation of }\{1,\ldots,l+l'\} \\\text{s.t. } \sigma(l)<\ldots<\sigma(1)\\ \text{and } \sigma(l+l')<\ldots<\sigma(l+1)}}
e_{z_{i_{\sigma^{-1}(l+l')}}} \ldots e_{z_{i_{\sigma^{-1}(1)}}}$ 
\end{center}
\noindent ii) the deconcatenation coproduct $\Delta_{\dec} : \mathcal{O}^{\sh,e_{\{z_{0},\ldots,z_{r}\}}}\rightarrow \mathcal{O}^{\sh,e_{\{z_{0},\ldots,z_{r}\}}} \otimes \mathcal{O}^{\sh,e_{\{z_{0},\ldots,z_{r}\}}}$, defined by, for all words :
\begin{center} 
$\Delta_{\dec}(e_{z_{i_{l}}}\ldots e_{z_{i_{1}}}) = \sum_{l'=0}^{l} e_{z_{i_{l}}}\ldots e_{z_{i_{l'+1}}} \otimes e_{z_{i_{l'}}} \ldots e_{z_{i_{1}}}$ 
\end{center}
\noindent iii) the counit $\epsilon : \mathcal{O}^{\sh,e_{\{z_{0},\ldots,z_{r}\}}} \rightarrow \mathbb{Q}$ sending all non-empty words to $0$.
\newline iv) the antipode $S : \mathcal{O}^{\sh,e_{\{z_{0},\ldots,z_{r}\}}} \rightarrow \mathcal{O}^{\sh,e_{\{z_{0},\ldots,z_{r}\}}}$, defined by, for all words :
\begin{center} 
$S(e_{z_{i_{l}}}\ldots e_{z_{i_{1}}}) = (-1)^{l} e_{z_{i_{1}}}\ldots e_{z_{i_{l}}}$ 
\end{center}
\noindent The Hopf algebra obtained in this way is called the shuffle Hopf algebra over the alphabet $\{e_{z_{0}},\ldots,e_{z_{r}}\}$.
\end{Proposition-Definition}

\begin{Proposition} \label{shuffle group scheme}
(Follows from \cite{Deligne}, \S12) The group scheme $\Spec(\mathcal{O}^{\sh,e_{\{z_{0},\ldots,z_{r}\}}})$ is pro-unipotent, and is canonically isomorphic to $\pi_{1}^{\un,\DR}(X_{K},\omega_{\DR})$, where $\omega_{\DR}$ is the canonical base-point of $\pi_{1}^{\un,\DR}(X_{K})$ in the sense of \cite{Deligne}, \S12.
\end{Proposition}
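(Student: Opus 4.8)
The plan is to derive both assertions from the Tannakian description of $\pi_{1}^{\un,\DR}(X_{K})$ together with the freeness of its Lie algebra, which is the content of \cite{Deligne}, \S12. Recall that $\pi_{1}^{\un,\DR}(X_{K},\omega_{\DR})$ is by definition the Tannaka dual $\Aut^{\otimes}(\omega_{\DR})$ of the neutral Tannakian category of unipotent integrable connections on $X_{K}$, where $\omega_{\DR}$ is the canonical de Rham fibre functor of \cite{Deligne}, \S12. First I would record the key geometric input: since $X_{K} = \mathbb{P}^{1} - \{0,z_{1},\ldots,z_{r},\infty\}$ is a smooth affine curve, one has $H^{i,\DR}(X_{K}) = 0$ for $i \geq 2$, and $H^{1,\DR}(X_{K})$ has for basis the classes of $\frac{dz}{z},\frac{dz}{z-z_{1}},\ldots,\frac{dz}{z-z_{r}}$. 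By the theory of \cite{Deligne}, \S12, the vanishing of the higher cohomology forces $\Lie \pi_{1}^{\un,\DR}(X_{K},\omega_{\DR})$ to be the free pro-nilpotent Lie algebra on the dual $H^{1,\DR}(X_{K})^{\vee}$; concretely the universal unipotent object carries the connection $\nabla_{\KZ}$, and computing its tensor automorphisms at $\omega_{\DR}$ yields exactly this free structure, the letters $e_{0},e_{z_{1}},\ldots,e_{z_{r}}$ being the basis of $H^{1,\DR}(X_{K})^{\vee}$ dual to the forms above.

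Granting this, pro-unipotence is immediate, since a pro-unipotent affine group scheme over a field of characteristic $0$ is precisely one whose Lie algebra is pro-nilpotent, and the free pro-nilpotent Lie algebra is of this type. Equivalently, and this is the form I would use on the shuffle side, $\mathcal{O}^{\sh,e_{\{z_{0},\ldots,z_{r}\}}}$ is a connected graded commutative Hopf algebra (its degree-$0$ part being spanned by the empty word), and the spectrum of any such Hopf algebra over $\mathbb{Q}$ is pro-unipotent; this settles the first assertion directly.

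For the canonical isomorphism I would argue by duality between the completed universal enveloping algebra and the coordinate Hopf algebra. A pro-unipotent affine group scheme $G$ over $K$ is recovered from its Lie algebra $\mathfrak{g}$, and $\mathcal{O}(G)$ is the graded continuous dual of the completed enveloping algebra $\hat{U}(\mathfrak{g})$. Here $\mathfrak{g}$ is free pro-nilpotent on $H^{1,\DR}(X_{K})^{\vee}$, so $\hat{U}(\mathfrak{g})$ is the completed free associative algebra $K\langle\langle e_{0},e_{z_{1}},\ldots,e_{z_{r}}\rangle\rangle$ with the concatenation product and the coproduct making each $e_{z_{i}}$ primitive. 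The crucial bookkeeping is to match structures under the word pairing $\langle w,w'\rangle = \delta_{w,w'}$, each graded piece being finite-dimensional as the alphabet is finite: concatenation dualizes to the deconcatenation coproduct $\Delta_{\dec}$, and the coproduct making the letters primitive dualizes to the shuffle product $\sh$. Hence the graded dual of $\hat{U}(\mathfrak{g})$ is the shuffle Hopf algebra of the Proposition--Definition, base-changed to $K$, its canonical $\mathbb{Q}$-structure being the one determined by the de Rham basis $[\frac{dz}{z}],[\frac{dz}{z-z_{1}}],\ldots,[\frac{dz}{z-z_{r}}]$ of $H^{1,\DR}(X_{K})$; dualizing back yields the asserted canonical isomorphism $\Spec(\mathcal{O}^{\sh,e_{\{z_{0},\ldots,z_{r}\}}}) \simeq \pi_{1}^{\un,\DR}(X_{K},\omega_{\DR})$.

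The main obstacle is the freeness input itself, namely that $\Lie \pi_{1}^{\un,\DR}(X_{K},\omega_{\DR})$ is free on $H^{1,\DR}(X_{K})^{\vee}$: this is where the geometry of the affine curve enters, through the vanishing of $H^{i,\DR}$ for $i\geq 2$ and of the relevant cup products, and it is exactly the part that ``follows from \cite{Deligne}, \S12''. Once freeness is in hand, both the pro-unipotence and the identification of the coordinate ring with the shuffle Hopf algebra are formal, the only genuine care being to orient the duality dictionary correctly, matching $\sh$ with the enveloping-algebra coproduct and $\Delta_{\dec}$ with concatenation, so as to land on the stated Hopf algebra rather than its opposite or co-opposite.
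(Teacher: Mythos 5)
Your proof is correct and is essentially the paper's own route: the paper offers nothing beyond the citation, and what you write is precisely an unpacking of \cite{Deligne}, \S12 --- the freeness of $\Lie \pi_{1}^{\un,\DR}(X_{K},\omega_{\DR})$ on $H^{1,\DR}(X_{K})^{\vee}$ coming from the geometry of the affine curve --- followed by the standard duality matching concatenation with $\Delta_{\dec}$ and the primitive-letters coproduct with $\sh$, which the paper itself records right afterwards in its Proposition-Definition on $(\mathcal{O}^{\sh,e_{\{z_{0},\ldots,z_{r}\}}})^{\vee}$ (citing \cite{U1}, \S4). One small correction: a pro-nilpotent Lie algebra does not characterize pro-unipotence among affine group schemes in characteristic $0$ (e.g. $\mathbb{G}_{m}$ has abelian Lie algebra but is not unipotent), so delete that ``precisely''; this is harmless here, since the argument you actually use --- that $\mathcal{O}^{\sh,e_{\{z_{0},\ldots,z_{r}\}}}$ is a connected graded commutative Hopf algebra, whose spectrum is automatically pro-unipotent --- is the correct one and settles that assertion on its own.
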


\noindent If $A$ is a ring, let $A\langle \langle e_{\{z_{0},\ldots,z_{r}\}} \rangle\rangle$ be the non-commutative $A$-algebra of power series over the alphabet $e_{\{z_{0},\ldots,z_{r}\}}$ \footnote{following a common abuse of notation, we denote in the same way $e_{z_{i}}$ in $\mathcal{O}^{\sh,e_{\{z_{0},\ldots,z_{r}\}}}$, and $e_{z_{i}}$ in $A\langle \langle e_{\{z_{0},\ldots,z_{r}\}} \rangle\rangle$, although, as we are going to see, $A\langle \langle e_{\{z_{0},\ldots,z_{r}\}} \rangle\rangle$ can be equipped in a natural way with the structure of the dual of the topological Hopf algebra of $\mathcal{O}^{\sh,e_{\{z_{0},\ldots,z_{r}\}}}$ and we will view it as containing the group of points $\Spec(\mathcal{O}^{\sh,e_{\{z_{0},\ldots,z_{r}\}}})(A)$}.
Let us denote by $\mathcal{W}(e_{\{z_{0},\ldots,z_{r}\}})$ the set of words over $e_{\{z_{0},\ldots,z_{r}\}}$ ; the coefficients of the decomposition of an element $f \in A\langle \langle e_{\{z_{0},\ldots,z_{r}\}} \rangle\rangle$ in the basis $\mathcal{W}(e_{\{z_{0},\ldots,z_{r}\}})$ are denoted by $f[\text{word}]$, as follows

\begin{equation}
\label{eq:fword} f = \sum_{w \in \mathcal{W}(e_{\{z_{0},\ldots,z_{r}\}})} f[w]w
\end{equation}

\begin{Proposition-Definition} (see for instance \cite{U1}, \S4) The dual  $(\mathcal{O}^{\sh,e_{\{z_{0},\ldots,z_{r}\}}})^{\vee}$ of the topological Hopf algebra $\mathcal{O}^{\sh,e_{\{z_{0},\ldots,z_{r}\}}}$ is $\mathbb{Q} \langle \langle e_{\{z_{0},\ldots,z_{r}\}} \rangle \rangle$, the completion of the universal enveloping algebra of the complete free Lie algebra over the variables $e_{z_{0}},e_{z_{1}},\ldots,e_{z_{r}}$, equipped with its canonical structure of topological Hopf algebra. For any $\mathbb{Q}$-algebra $A$, the group of points $\Spec(\mathcal{O}^{\sh,e_{\{z_{0},\ldots,z_{r}\}}})(A)$ consists of the grouplike elements of $(\mathcal{O}^{\sh,e_{\{z_{0},\ldots,z_{r}\}}})^{\vee}\otimes_{\mathbb{Q}} A$ and is :
\begin{equation} \label{eq:shuffle equation} \{ f \in A\langle\langle e_{\{z_{0},\ldots,z_{r}\}} \rangle\rangle \text{ }|\text{ }\forall w,w' \in \mathcal{W}(e_{\{z_{0},\ldots,z_{r}\}}), f[w\text{ }\sh\text{ }w']=f[w]f[w'],\text{ and }f[\emptyset] = 1 \}
\end{equation}
\noindent and the Lie algebra of $(\mathcal{O}^{\sh,e_{\{z_{0},\ldots,z_{r}\}}})^{\vee}\otimes_{\mathbb{Q}} A$ consists thus of the primitive elements of $(\mathcal{O}^{\sh,e_{\{z_{0},\ldots,z_{r}\}}})^{\vee}\otimes_{\mathbb{Q}} A$ and is :
\begin{equation} \label{eq:shuffle equation modulo products}
\{ f \in A \langle\langle e_{\{z_{0},\ldots,z_{r}\}} \rangle\rangle \text{ }|\text{ }\forall w,w' \in \mathcal{W}(e_{\{z_{0},\ldots,z_{r}\}}), f[w\text{ }\sh\text{ }w']=0\}
\end{equation}
\end{Proposition-Definition}

\noindent One says that the elements $f$ in (\ref{eq:shuffle equation}) satisfy the shuffle equation, and that the elements $f$ in (\ref{eq:shuffle equation modulo products}) satisfy the shuffle equation modulo products.

\begin{Proposition-Definition} (follows from \cite{Deligne}, \S7.30 and \S12) \label{prop connexion} 
\newline i) The canonical connection of $\pi_{1}^{\un,\DR}(X_{K})$, called the Knizhnik-Zamolodchikov (or KZ) connection, is given, in the sense of \cite{Deligne}, \S7.30.2, by
\begin{center} $\nabla^{X_{K}}_{\KZ} : f \mapsto f^{-1} \big( df - e_{0} f \frac{dz}{z} - \sum_{i=1}^{r} e_{z_{i}} f \frac{dz}{z-z_{i}}  \big)$ 
\end{center}
\noindent Its horizontal sections have their coefficients defined by iterated integrals of $\frac{dz}{z}$ and $\frac{dz}{z-z_{i}}$, $i=1,\ldots,r$, called hyperlogarithms.
\newline ii) Assume now that $K$ is a completed normed field and that $z_{1},\ldots,z_{r}$ have norm $1$. The hyperlogarithms are the functions given by the following power series expansion, for $|z|_{K}<1$ : for $d \in \mathbb{N}^{\ast}$, $s_{1},\ldots,s_{d} \in \mathbb{N}^{\ast}$, $j_{1},\ldots,j_{d} \in \{1,\ldots,r\}$,
$$ \Li \big(\begin{array}{c} z_{j_{d}},\ldots,z_{j_{1}} \\ s_{d},\ldots,s_{1} \end{array} \big)(z) =  \sum_{0<n_{1}<\ldots<n_{d}} \frac{\big( \frac{z_{j_{2}}}{z_{j_{1}}}\big)^{n_{1}} \ldots \big( \frac{z}{z_{j_{d}}}\big)^{n_{d}}}{n_{1}^{s_{1}} \ldots n_{d}^{s_{d}}} \in K $$
\end{Proposition-Definition}

\noindent In particular, $e_{z_{i}}$ is the residue of $\nabla^{X_{K}}_{\KZ}$ at $z_{i}$ ; for certain purposes, it will be natural to view $\mathbb{Q} \langle e_{\{z_{0},\ldots,z_{r}\}} \rangle$ as the $\mathbb{Q}$-vector space freely generated by words on the larger alphabet $\{e_{0},e_{z_{1}},\ldots,e_{z_{r}},e_{\infty}\}$, moded out by the sum of all the residues $e_{0} + e_{z_{1}} + \ldots + e_{z_{r}} + e_{\infty}$.

\begin{Definition} \label{def of tau} (Following \cite{Deligne Goncharov}, \S5) Let $\tau$ be the action of $\mathbb{G}_{m}(K)$ on $K \langle\langle e_{\{0\}\cup\mu_{N}} \rangle\rangle$, that maps $(\lambda,f) \in \mathbb{G}_{m}(K) \times K \langle\langle e_{\{0\}\cup\mu_{N}} \rangle\rangle$ to $\sum_{w\in\mathcal{W}(e_{\{0\}\cup\mu_{N}})} \lambda^{\weight(w)} f[w]w$, where $f$ is written as in equation (\ref{eq:fword}).
\end{Definition}

\subsection{$\pi_{1}^{\un,\crys}(X_{K_{N},N}$)}

The crystalline pro-unipotent fundamental groupoid of $X_{k_{N},N}$ can be defined as the fundamental groupoid associated to the Tannakian category the unipotent $F$-isocristals over $X_{k_{N},N}$, following Chiarellotto and Le Stum \cite{CLS} ; a variant using logarithmic geometry is due to Shiho \cite{Shiho 1}, \cite{Shiho 2} ; one has then a theorem of comparison relating this object to $\pi_{1}^{\un,\DR}(X_{K_{N},N})$ \cite{CLS} \cite{Shiho 1} \cite{Shiho 2} ; alternatively, following Deligne \cite{Deligne}, one can define directly a Frobenius structure on
$\pi_{1}^{\un,\DR}(X_{K_{N},N})$, and call  $\pi_{1}^{\un,\crys}(X_{k_{N},N})$ the data of 
$\pi_{1}^{\un,\DR}(X_{K_{N},N})$ plus the Frobenius. We review this more elementary point of view in this paragraph.
\newline
\newline Let us fix a prime $p$, and $N \in \mathbb{N}^{\ast}$ prime to $p$. We now go back to the notations of the introduction, and assume that $K=K_{N}$, $r=N$, and $(z_{1},\ldots,z_{r})=( \xi_{N}^{1},\ldots,\xi_{N}^{N})$ where $\xi_{N}$ is a primitive $N$-th root of unity in $K_{N}$. Following \cite{I-1} and \cite{I-3}, for each $\alpha \in \mathbb{N}^{\ast}$, we adopt the convention that the Frobenius iterated $\alpha$ times is $\tau(p^{\alpha})\phi^{\alpha}$ where $\phi$ is in the sense of \cite{Deligne}, \S13, and, for each $\alpha \in -\mathbb{N}^{\ast}$, the Frobenius iterated $\alpha$ times is $F_{\ast}^{\alpha}$, where $F_{\ast}=\phi^{-1}$ is in the sense of \cite{Deligne}, \S11.
\newline We have 
$\mathcal{O}\big(\pi_{1}^{\un,\DR}(X_{K},\omega_{\DR}(X_{K}))\big)$, resp. $\mathcal{O}\big(\pi_{1}^{\un,\DR}(X_{K}^{(p^{\alpha})},\omega_{\DR}(X_{K}^{(p^{\alpha})}))\big)$ is the shuffle Hopf algebra over the alphabet 
$\{e_{0},e_{z_{1}},\ldots,e_{z_{N}}\}$ resp. $\{e_{0},e_{z_{1}^{(p^{\alpha})}},\ldots,e_{z_{N}^{(p^{\alpha})}}\}$. We denote by $w \mapsto w^{(p^{\alpha})}$ the isomorphism between them that sends $e_{0} \mapsto e_{0}$ and $e_{z_{i}} \mapsto e_{z_{i}^{(p^{\alpha})}}$, $i \in \{1,\ldots,N\}$.
\newline For any $\alpha \in \mathbb{N}^{\ast} \cup - \mathbb{N}^{\ast}$, the Frobenius is determined by its values at the canonical paths $_{y}1_{x}$ in the sense of \cite{Deligne}, \S12 ; they are determined by the couple $(\Li_{p,\alpha}^{\dagger},\Phi_{p,\alpha})$ made of the non-commutative generating series of overconvergent $p$-adic hyperlogarithms ($p$HL$^{\dagger}$'s) and the non-commutative generating series of $p$MZV$\mu_{N}$'s, defined as follows :

\begin{Definition} i) (see \S1.1 and \cite{I-1} for references) Let $U_{N}$ be the rigid analytic space $\mathbb{P}^{1,\an} - \cup_{i=1}^{N} \{z \text{ }|\text{ }|z-\xi_{N}^{i}|_{p}<1\}\text{ }/\text{ }K_{N}$ and $\frak{A}^{\dagger}(U_{N})$ the $K_{N}$-algebra of overconvergent rigid analytic functions on it.
\newline For $\alpha \in \mathbb{N}^{\ast}$, let $\Li_{p,\alpha}^{\dagger},$ resp. $\Li_{p,-\alpha}^{\dagger} \in \pi_{1}^{\un,\DR}(X_{K},\vec{1}_{0})(\frak{A}^{\dagger}(U_{N}))$, be the map 
$z \mapsto (p^{\alpha})^{\weight}\phi_{\ast}^{-\alpha}({}_z 1_{\vec{1}_{0}})$, resp. $z \mapsto F_{\ast}^{\alpha} ({}_z 1_{\vec{1}_{0}})$ : the coefficients of $\Li_{p,\pm \alpha}^{\dagger}$, which are elements of $\frak{A}^{\dagger}(U_{N})$, are called overconvergent $p$-adic hyperlogarithms.
\newline ii) (see \S1.1 for references) We denote by $z_{N+1} = \infty$ ; let $i \in \{1,\ldots,N+1\}$. Let
$\Phi^{(z_{i})}_{p,\alpha} = \tau(p^{\alpha})\phi^{\alpha} ( _{\vec{1}_{z_{i}^{p^{\alpha}}}} 1 _{\vec{1}_{0}}) \in \pi_{1}^{\un,\DR}(X_{K_{N},N},\vec{1}_{z_{i}},\vec{1}_{0})(K_{N})$ if $\alpha>0$, and 
$\Phi^{(z_{i}^{p^{|\alpha|}})}_{p,\alpha} = F_{\ast}^{\alpha} ( _{\vec{1}_{z_{i}}} 1 _{\vec{1}_{0}}) \in \pi_{1}^{\un,\DR}(X_{K_{N},N}^{(p^{|\alpha[]})},\vec{1}_{z_{i}^{p^{|\alpha|}}},\vec{1}_{0})(K_{N})$ if $\alpha<0$.
\newline For $w = e_{0}^{s_{d}-1}e_{z_{i_{d}}} \ldots e_{0}^{s_{1}-1}e_{z_{i_{1}}}= \big( \begin{array}{c} z_{i_{d}}, \ldots, z_{i_{1}} \\ s_{d},\ldots,s_{1} \end{array} \big)$, with $d \in \mathbb{N}^{\ast}$, and $s_{1},\ldots,s_{d} \in \mathbb{N}^{\ast}$, and $i_{1},\ldots,i_{d} \in \{1,\ldots,N\}$, one calls $p$-adic multiple zeta values at roots of unity the numbers $\zeta^{(z_{i})}_{p,\alpha}(w) = \Phi^{(z)}_{p,\alpha}[w]$ if $\alpha>0$, and  $\zeta^{(z_{i}^{p^{|\alpha|}})}_{p,\alpha}(w^{(p^{\alpha})}) = \Phi^{(z_{i}^{p^{|\alpha|}})}_{p,\alpha}[w^{(p^{\alpha})}]$ if $\alpha<0$.
\newline iii) For all objects $\ast$ above, and $\alpha = \frac{\log(q_{N})}{\log(p)} \tilde{\alpha}$, let $\ast_{q_{N},\tilde{\alpha}} = \ast_{p,\alpha}$.
\newline iii) \label{def Li coleman} (Furusho \cite{Furusho 1} for $N=1$, Yamashita \cite{Yamashita} for any $N$). We fix a determination $\log_{p}$ of the $p$-adic logarithm. Let $\Li_{p,X_{K}}^{\KZ}$ resp. $\Li_{p,X_{K}^{(p^{\alpha})}}^{\KZ}$ be the unique Coleman function on $X_{K}$, resp. $X_{K}^{(p^{\alpha})}$, which is a horizontal section of $\nabla_{\KZ}$ and has the asymptotic behaviour
$\Li_{p,X_{K}}^{\KZ}(z) \underset{z \rightarrow 0}{\sim} e^{e_{0} \log_{p}(z)}$, resp. 
$\Li_{p,X_{K}^{(p^{\alpha})}}^{\KZ}(z) \underset{z \rightarrow 0}{\sim} e^{e_{0} \log_{p}(z)}$.
\end{Definition}

\noindent Below, we assume $\alpha>0$. The properties for $\alpha<0$ are similar. For all $j=1,\ldots,N$, we denote by $z_{j}=\xi_{N}^{j}$, by $\Phi^{(\xi_{N}^{j})}_{p,\alpha}$ the image of $\Phi_{p,\alpha}$ by the automorphism $(x \mapsto z_{j}x)_{\ast}$, and $\omega_{z_{j}}(x)= \frac{dx}{x-z_{j}}$. We denote by $z_{0}=0$.
The $\Li_{p}^{\KZ}$'s are related to multiple harmonic sums of equation (\ref{eq: multiple harmonic sums}) as follows :

\begin{Fact} For $z \in \mathbb{C}_{p}$ such that $|z|_{p}<1$, and for each word $w=e_{0}^{n_{d}-1}e_{\xi_{N}^{j_{d}}} \ldots e_{0}^{n_{1}-1}e_{\xi_{N}^{j_{1}}}$, with $j_{d},\ldots,j_{1} \in \{1,\ldots,N\}$, $n_{d},\ldots,n_{1}\in \mathbb{N}^{\ast}$, we have
	$$ \Li_{p,X_{K}}^{\KZ}[w](z) = (-1)^{d}\sum_{0<m_{1}<\ldots<m_{d}}
	\frac{\big(\frac{\xi_{N}^{j_{2}}}{\xi_{N}^{j_{1}}}\big)^{m_{1}} \ldots \big(\frac{\xi_{N}^{j_{d}}}{\xi_{N}^{j_{d-1}}}\big)^{m_{d-1}}\big(\frac{z}{\xi_{N}^{j_{d}}}\big)^{m_{d}}}{m_{1}^{n_{1}}\ldots m_{d}^{n_{d}}} $$
	$$ \Li_{p,X_{K}^{(p^{\alpha})}}^{\KZ}[w^{(p^{\alpha})}](z) = (-1)^{d}\sum_{0<m_{1}<\ldots<m_{d}}
	\frac{\big(\frac{\xi_{N}^{j_{2}p^{\alpha}}}{\xi_{N}^{j_{1}p^{\alpha}}}\big)^{n_{1}} \ldots \big( \frac{\xi_{N}^{j_{d}p^{\alpha}}}{\xi_{N}^{j_{d-1}p^{\alpha}}}\big)^{m_{d-1}} \big(\frac{z}{\xi_{N}^{j_{d}p^{\alpha}}}\big)^{m_{d}}}{m_{1}^{n_{1}}\ldots m_{d}^{n_{d}}} $$
\end{Fact}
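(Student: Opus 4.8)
The plan is to reduce the statement to an elementary computation of iterated integrals inside the residue disk $\{|z|_{p}<1\}$ of the origin, and then to carry out that computation by induction on the weight. Up to the global sign $(-1)^{d}$, the identity is nothing but the specialization of Proposition-Definition \ref{prop connexion} ii) to $z_{j_{i}}=\xi_{N}^{j_{i}}$ and $s_{i}=n_{i}$; the sign records the difference between the hyperlogarithm normalized as in \ref{prop connexion} ii) and the coefficient of the generating series $\Li_{p,X_{K}}^{\KZ}$.

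First I would use Proposition-Definition \ref{prop connexion} together with the definition of $\Li_{p,X_{K}}^{\KZ}$. By construction $L:=\Li_{p,X_{K}}^{\KZ}$ is the horizontal section of $\nabla_{\KZ}$ normalized by $L(z)\sim e^{e_{0}\log_{p}(z)}$ as $z\to 0$; writing $L=\sum_{u}L[u]\,u$ and projecting the flatness equation $dL=\big(e_{0}\tfrac{dz}{z}+\sum_{i=1}^{N}e_{\xi_{N}^{i}}\tfrac{dz}{z-\xi_{N}^{i}}\big)L$ onto each word yields the recursion $d\,L[e_{a}u]=\omega_{a}\,L[u]$, with $\omega_{0}=\tfrac{dz}{z}$ and $\omega_{\xi_{N}^{i}}=\tfrac{dz}{z-\xi_{N}^{i}}$, and $L[\emptyset]=1$. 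Inside the disk $|z|_{p}<1$ every form $\omega_{\xi_{N}^{i}}$ is analytic, because $|\xi_{N}^{i}|_{p}=1>|z|_{p}$, so Coleman integration reduces to the naive antiderivative and the constant of integration is pinned down by the normalization, which forces $L[u](z)\to 0$ as $z\to 0$ for every word $u$ ending on the right in a letter $e_{\xi_{N}^{i}}$. Since the word $w=e_{0}^{n_{d}-1}e_{\xi_{N}^{j_{d}}}\cdots e_{0}^{n_{1}-1}e_{\xi_{N}^{j_{1}}}$, and all of its right factors that I will encounter, end in such a letter (the innermost one being $e_{\xi_{N}^{j_{1}}}$), no logarithmic regularization intervenes and each relevant antiderivative is the honest integral $\int_{0}^{z}$.

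Next I would unfold the iterated integral. Expanding $\tfrac{dt}{t-\xi_{N}^{j}}=-\xi_{N}^{-j}\sum_{l\geq 0}(\xi_{N}^{-j}t)^{l}\,dt$, valid for $|t|_{p}<1$, I build the word from right to left, i.e. from the innermost form outward. The two elementary steps are: prepending an $e_{0}$ sends a series $\sum_{m}c_{m}t^{m}$ with $c_{0}=0$ to $\sum_{m}\tfrac{c_{m}}{m}t^{m}$, which produces the denominators $m_{i}^{n_{i}}$ from the $n_{i}-1$ consecutive copies of $e_{0}$ together with the integration that creates the index $m_{i}$; and prepending an $e_{\xi_{N}^{j}}$ integrates term by term, introduces a new summation index strictly larger than the previous one, and contributes one factor $-\xi_{N}^{-j(\cdots)}$, hence both a sign flip and, after rearrangement, the correct ratio of roots of unity. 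Iterating these steps and collecting the powers of $\xi_{N}$ into the telescoping ratios $(\xi_{N}^{j_{i+1}}/\xi_{N}^{j_{i}})^{m_{i}}$ together with the final factor $(z/\xi_{N}^{j_{d}})^{m_{d}}$ reproduces the claimed series, the global sign being $(-1)^{d}$ because exactly $d$ root-of-unity letters occur in $w$. A clean formulation is an induction on the depth $d$ with an inner induction on $n_{d}$ for the trailing $e_{0}$'s; the depth-one base case is $L[e_{0}^{n_{1}-1}e_{\xi_{N}^{j_{1}}}](z)=-\sum_{m_{1}\geq 1}(z/\xi_{N}^{j_{1}})^{m_{1}}/m_{1}^{n_{1}}$, matching the depth-one instance of \ref{prop connexion} ii) up to the sign.

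The twisted formula on $X_{K}^{(p^{\alpha})}$ follows verbatim: the connection $\nabla_{\KZ}$ on $X_{K}^{(p^{\alpha})}$ has residues at the points $(\xi_{N}^{j})^{(p^{\alpha})}=\sigma^{\alpha}(\xi_{N}^{j})=\xi_{N}^{jp^{\alpha}}$, since $\sigma$ sends a root of unity to its $p^{\alpha}$-th power, so repeating the same unfolding with $\xi_{N}^{j}$ replaced throughout by $\xi_{N}^{jp^{\alpha}}$ gives the second displayed identity. I expect no genuine obstacle beyond the index-and-sign bookkeeping; the only point requiring care is the justification that, on the disk of $0$, the Coleman-theoretic section $\Li_{p,X_{K}}^{\KZ}$ really is the naive analytic solution with the stated vanishing at $0$, and this is precisely where the hypothesis that $w$ ends in a root-of-unity letter makes the regularization trivial, so the subtlety evaporates and the argument is the standard expansion of hyperlogarithms into multiple polylogarithm series.
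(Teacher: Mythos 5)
Your proof is correct and is essentially the argument the paper intends: the statement is given as a Fact without proof, being the standard power-series expansion of the flat sections of $\nabla_{\KZ}$ already recorded in Proposition-Definition \ref{prop connexion} ii), and your right-to-left induction on the word --- with the asymptotic normalization at $0$ pinning the integration constants (valid because every suffix of $w$ ends in a root-of-unity letter), the $e_{0}$-steps producing the denominators $m_{i}^{n_{i}}$, and the $d$ geometric-series expansions of $\frac{dt}{t-\xi_{N}^{j}}$ producing the sign $(-1)^{d}$ --- is exactly that routine verification, applied verbatim to $X_{K}^{(p^{\alpha})}$ with $\xi_{N}^{j}$ replaced by $\xi_{N}^{jp^{\alpha}}$. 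Your computation also implicitly corrects a typo in the paper's second display, where the exponent $n_{1}$ on the first ratio of roots of unity should read $m_{1}$.
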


\begin{Proposition} The couple $(\Li_{p,\alpha}^{\dagger},\Phi_{p,\alpha})$ is determined by :
\begin{equation} \label{eq:horizontality equation}d\Li_{p,\alpha}^{\dagger} = \bigg( \sum_{j=0}^{N} p^{\alpha} \omega_{z_{j}}(z) e_{z_{j}} \bigg) \Li_{p,\alpha}^{\dagger} - \Li_{p,\alpha}^{\dagger} \bigg( \sum_{j=0}^{N} \omega_{z_{j}^{p^{\alpha}}}(z^{p^{\alpha}}) ,{\Phi^{(z_{j})}_{p,\alpha}}^{-1}e_{z_{j}}\Phi^{(z_{j})}_{p,\alpha} \bigg)
\end{equation}
\noindent i.e.
\begin{multline} \label{eq:horizontality equation bis} \Li_{p,\alpha}^{\dagger}(z)(e_{0},e_{z_{1}},\ldots,e_{z_{N}})\times\Li_{p,X_{K}^{(p^{\alpha})}}^{\KZ}(z^{p^{\alpha}})\big(e_{0},{\Phi^{(z_{1})}_{p,\alpha}}^{-1}e_{z_{1}}\Phi^{(z_{1})}_{p,\alpha},\ldots,{\Phi^{(z_{N})}_{p,\alpha}}^{-1}e_{z_{N}}\Phi^{(z_{N})}_{p,\alpha} \big)
\\ = \Li_{p,X_{K}}^{\KZ}(z)(p^{\alpha}e_{0},p^{\alpha}e_{z_{1}},\ldots,p^{\alpha}e_{z_{N}})
\end{multline}
\noindent and
\begin{equation} \label{eq:algebraic Frobenius equation}e_{0} + \sum_{i=1}^{N} (\Phi_{p,\alpha}^{(z_{i})})^{-1} e_{z_{i}} \Phi_{p,\alpha}^{(z_{i})} + \Li_{p,\alpha}(\infty)^{-1}(e_{0}+\sum_{i=1}^{N}e_{z_{i}}) \Li_{p,\alpha}(\infty) = 0 
\end{equation}
\end{Proposition}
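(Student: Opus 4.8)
The plan is to identify $\Li_{p,\alpha}^{\dagger}$ with the flat comparison section attached to the iterated crystalline Frobenius, and then to read each of the three assertions off the horizontality of that comparison. By Proposition~\ref{prop connexion}, $\Li_{p,X_{K}}^{\KZ}$ (resp.\ $\Li_{p,X_{K}^{(p^{\alpha})}}^{\KZ}$) is the flat section of $\nabla_{\KZ}$ (resp.\ of the connection obtained by pulling back along $z\mapsto z^{p^{\alpha}}$) normalized at the tangential base-point $\vec{1}_{0}$. Since $\Li_{p,\alpha}^{\dagger}(z)=(p^{\alpha})^{\weight}\phi_{\ast}^{-\alpha}({}_{z}1_{\vec{1}_{0}})$ is the image of the canonical path under the iterated Frobenius, it is exactly the transition function $F=s_{1}s_{2}^{-1}$ between these two normalized flat sections. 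For any comparison $F=s_{1}s_{2}^{-1}$ between integrable connections $d-\omega_{1}$ and $d-\omega_{2}$ one has the elementary identity $dF=\omega_{1}F-F\omega_{2}$, so the entire content is to compute $\omega_{1}$ and $\omega_{2}$ explicitly.

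First I would treat the source side. The twist $\tau(p^{\alpha})$ rescales the weight grading (Definition~\ref{def of tau}), so that the connection seen by $s_{1}$ is $\omega_{1}=\sum_{j=0}^{N}p^{\alpha}\omega_{z_{j}}(z)e_{z_{j}}$; this matches the right-hand side $\Li_{p,X_{K}}^{\KZ}(z)(p^{\alpha}e_{0},\ldots,p^{\alpha}e_{z_{N}})$ of \eqref{eq:horizontality equation bis}. For the target side, $s_{2}$ is flat for the pulled-back connection, whose residue at $z_{j}^{p^{\alpha}}$ is $e_{z_{j}}$; but writing this connection in the fibre at $\vec{1}_{0}$ requires transporting the residue along the straight path from $\vec{1}_{0}$ to $\vec{1}_{z_{j}}$, whose Frobenius image is precisely the associator $\Phi_{p,\alpha}^{(z_{j})}$. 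This is the standard mechanism whereby an associator conjugates a local monodromy, and it gives $\omega_{2}=\sum_{j=0}^{N}\omega_{z_{j}^{p^{\alpha}}}(z^{p^{\alpha}})\,(\Phi_{p,\alpha}^{(z_{j})})^{-1}e_{z_{j}}\Phi_{p,\alpha}^{(z_{j})}$ (with $\Phi_{p,\alpha}^{(z_{0})}=1$). Substituting into $dF=\omega_{1}F-F\omega_{2}$ yields \eqref{eq:horizontality equation}, whose integrated form, with the normalization $\Li_{p,\alpha}^{\dagger}(0)=1$, is precisely the product identity $Fs_{2}=s_{1}$, i.e.\ \eqref{eq:horizontality equation bis}.

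The algebraic relation \eqref{eq:algebraic Frobenius equation} I would obtain from the behaviour at $\infty$. On $\mathbb{P}^{1}$ the residues of any connection sum to zero, so $\res_{\infty}\nabla_{\KZ}=-(e_{0}+\sum_{i}e_{z_{i}})$ while $\res_{\infty}\omega_{2}=-(e_{0}+\sum_{i}(\Phi_{p,\alpha}^{(z_{i})})^{-1}e_{z_{i}}\Phi_{p,\alpha}^{(z_{i})})$, these being the logarithms of the monodromies around $\infty$ on the two sides. Comparing the two exponents through $F$, whose regularized value at $\infty$ is $\Li_{p,\alpha}(\infty)$, transports the source residue by conjugation by $\Li_{p,\alpha}(\infty)$; concretely one takes the residue at $\infty$ of \eqref{eq:horizontality equation} (equivalently, one transports the monodromy relation ``the product of the loops around all punctures is trivial'' under the Frobenius), and \eqref{eq:algebraic Frobenius equation} drops out.

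Finally, for the word ``determined'' I would argue uniqueness as follows: once $\Phi_{p,\alpha}=(\Phi_{p,\alpha}^{(z_{1})},\ldots,\Phi_{p,\alpha}^{(z_{N})})$ is fixed, the differential equation \eqref{eq:horizontality equation} together with $\Li_{p,\alpha}^{\dagger}(0)=1$ has a unique solution, so it remains only to pin down $\Phi_{p,\alpha}$, and this is forced by the requirement that the solution lie in $\mathfrak{A}^{\dagger}(U_{N})$, i.e.\ be overconvergent on the complement of the residue disks, exactly as a Drinfeld-type associator is rigidified by its regularity. I expect the main obstacle to be neither differential identity---both are formal once the comparison picture is in place---but rather the precise matching between Deligne's Frobenius on tangential base-points and the conjugations by the $\Phi_{p,\alpha}^{(z_{j})}$, together with the verification that $\Li_{p,\alpha}^{\dagger}$ is genuinely overconvergent on $U_{N}$; for the latter the analytic input of \cite{I-1} (Coleman integration and the valuation estimates) is indispensable.
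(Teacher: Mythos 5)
First, a point of comparison: this Proposition is stated in \S 2 without proof --- it belongs to the review of \cite{I-1} (within the framework of \cite{Deligne}, \S 11--13) --- so there is no in-paper argument to measure you against; the nearest analogues are the existence-and-uniqueness proof for Problem \ref{problem} in \S 4 and the summary of the strategy of \cite{I-1} in \S 9.3. Your route is in substance the same as that framework: $\Li_{p,\alpha}^{\dagger}$ is the transition function $F=s_{1}s_{2}^{-1}$ between the $\tau(p^{\alpha})$-rescaled KZ flat section and the pulled-back flat section whose residues are conjugated by the $\Phi_{p,\alpha}^{(z_{j})}$; the identity $dF=\omega_{1}F-F\omega_{2}$ gives (\ref{eq:horizontality equation}), integration with the normalization at $0$ gives (\ref{eq:horizontality equation bis}), the residue at $\infty$ gives (\ref{eq:algebraic Frobenius equation}), and uniqueness is forced by the differential equation, the normalization, and overconvergence --- exactly the regularity argument by which the paper pins down the solution of Problem \ref{problem} weight by weight.

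Two caveats. (1) The identification $\omega_{2}=\sum_{j}\omega_{z_{j}^{p^{\alpha}}}(z^{p^{\alpha}})\,(\Phi_{p,\alpha}^{(z_{j})})^{-1}e_{z_{j}}\Phi_{p,\alpha}^{(z_{j})}$ is the entire content of the statement: it needs the compatibility of the Frobenius with the groupoid composition at the tangential base-points $\vec{1}_{z_{j}}$, so that the image of the residue at $z_{j}$, transported to $\vec{1}_{0}$, is the conjugate by $\Phi_{p,\alpha}^{(z_{j})}=\tau(p^{\alpha})\phi^{\alpha}({}_{\vec{1}_{z_{j}^{p^{\alpha}}}}1_{\vec{1}_{0}})$. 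You assert this as the standard associator mechanism and honestly flag it as the main obstacle, but in a complete proof this is precisely the step where \cite{Deligne} and \cite{I-1} must be invoked; the formal identity $dF=\omega_{1}F-F\omega_{2}$ alone carries no information about the Frobenius. (2) Your residue computation at $\infty$, carried through with the stated conventions ($F$ analytic at $\infty$, $\res_{\infty}\omega_{z_{j}}(z)=-1$, $\res_{\infty}\omega_{z_{j}^{p^{\alpha}}}(z^{p^{\alpha}})=-p^{\alpha}$), yields
\begin{equation*}
e_{0}+\sum_{i=1}^{N}(\Phi_{p,\alpha}^{(z_{i})})^{-1}e_{z_{i}}\Phi_{p,\alpha}^{(z_{i})}\;=\;\Li_{p,\alpha}(\infty)^{-1}\bigg(e_{0}+\sum_{i=1}^{N}e_{z_{i}}\bigg)\Li_{p,\alpha}(\infty),
\end{equation*}
i.e.\ the last term of (\ref{eq:algebraic Frobenius equation}) enters as the transport of $e_{\infty}=-(e_{0}+\sum_{i}e_{z_{i}})$, hence with a minus sign relative to the printed equation. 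Indeed, comparing weight-one components (every $\Phi_{p,\alpha}^{(z_{i})}$ and $\Li_{p,\alpha}(\infty)$ being $1$ plus higher-weight terms) shows the relation cannot hold with both terms positive. So your mechanism is correct and, pushed to the end, detects a sign misprint in the statement; make sure your write-up derives the corrected form rather than reproducing the printed one.
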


\section{Setting for computations}

\subsection{Bases of $H^{1,\DR}(\mathbb{P}^{1} - \{0,\mu_{p^{\alpha}N},\infty\})$ and presentations of $\pi_{1}^{\un,\DR}(X_{K_{p^{\alpha}N},p^{\alpha}N})$ ; two particular examples}

\subsubsection{Bases of $H^{1,\DR}(\mathbb{P}^{1} - \{0,\mu_{N},\infty\})$, bases of $\Map(\mathbb{Z}/p^{\alpha}\mathbb{Z},K_{p^{\alpha}N})$ and bases of $H^{1,\DR}(\mathbb{P}^{1} - \{0,\mu_{p^{\alpha}N},\infty\})$}

\noindent 
\newline We define a certain type of basis of $H^{1,\DR}(\mathbb{P}^{1} - \{0,\mu_{p^{\alpha}N},\infty\})$.

\begin{Definition} \label{def forming a basis}Let $\omega_{0}(z) = \frac{dz}{z},\omega_{\xi_{N}^{j}}(z) = \frac{dz}{z- \xi_{N}^{j}} = -\frac{1}{\xi_{N}^{j}} \sum_{m\geq 0} (\xi_{N}^{j})^{-m}z^{m}$, $j=1,\ldots,N$ be the canonical basis of $\Omega^{1}(\mathbb{P}^{1} - \{0,\mu_{N},\infty\})$. Let $b=(f_{1},\ldots,f_{r})$ be a set of maps $\Map(\mathbb{Z}/p^{\alpha}\mathbb{Z},K_{p^{\alpha}N})$.
\newline Then we consider the sequence of differential forms
\begin{equation} \label{eq: form a basis of palpha N}  \bigg( \frac{1}{\xi_{N}^{j}} \sum_{m\geq 0} f_{i}(m \mod p^{\alpha}) (\xi_{N}^{j})^{-m}z^{m} \bigg)_{\substack{i=1,\ldots,r \\ j=1,\ldots,N}} \end{equation} 
\end{Definition}

\begin{Remark} We could also generalize this definition and our computations by replacing the canonical basis of $\Omega^{1}(\mathbb{P}^{1} - \{0,\mu_{N},\infty\})$ by any basis. This would give ultimately variants of $p$MZV$\mu_{N}$'s, for example expressed by multiple harmonic sums involving congruences modulo $N$. However, this object is less natural for our current purposes. We could also choose $N$ different $b$'s.
\end{Remark}

\noindent We are interested in the cases where $r=p^{\alpha}$ and $b$ is a basis of $\Map(\mathbb{Z}/p^{\alpha}\mathbb{Z},K_{p^{\alpha}N})$, and (\ref{eq: form a basis of palpha N}) joint with $\omega_{0}$ define a basis of $H^{1,\DR}(\mathbb{P}^{1} - \{0,\mu_{p^{\alpha}N},\infty\})$. We will be interested in two examples :

\begin{Example} \label{example basis} i) With $r=p^{\alpha}$ and $f_{i} : m \mapsto \rho_{p^{\alpha}}^{im}$, $i=1,\ldots,r$, where $\rho_{p^{\alpha}}$ is a primitive $p^{\alpha}$-th root of unity, we obtain the canonical basis $B_{can}$ of $H^{1,\DR}(\mathbb{P}^{1} - \{0,\mu_{p^{\alpha}N},\infty\})$ :
$$ \bigg\{ \frac{dz}{z} , \frac{dz}{z - \rho_{p^{\alpha}}^{i}\xi_{N}^{j}}, i=1,\ldots,p^{\alpha},\text{ }j=1,\ldots,N \bigg\} $$
\noindent ii) With $r=p^{\alpha}$ and $f_{i} : m \mapsto 1_{m \equiv i \mod p^{\alpha}}$, we obtain another basis $B_{cong}$ of  $H^{1,\DR}(\mathbb{P}^{1} - \{0,\mu_{p^{\alpha}N},\infty\})$, the "canonical basis with congruences modulo $p^{\alpha}$" :
$$ \bigg\{ \frac{dz}{z} , \frac{z^{r}dz}{z^{p^{\alpha}} - \xi_{N}^{jp^{\alpha}}}, r=1,\ldots,p^{\alpha},\text{ }j=1,\ldots,N \bigg\} $$
\end{Example}

\noindent Of course, we are interested in the second example partly because we have the lift of Frobenius $z \mapsto z^{p^{\alpha}}$ on $\mathbb{P}^{1} - \{0,\mu_{N},\infty\}$.

\subsubsection{Presentation of $(\pi_{1}^{\un,\DR}(X_{K_{p^{\alpha}N},p^{\alpha}N}),\nabla_{\KZ}^{\mu_{p^{\alpha}N}})$ from the canonical basis with congruences of $H^{1,\DR}(\mathbb{P}^{1} - \{0,\mu_{p^{\alpha}N},\infty\})$}

\noindent In this paragraph, we formalize the presentation of  $(\pi_{1}^{\un,\DR}(X_{K_{p^{\alpha}N},p^{\alpha}N}),\nabla_{\KZ}^{\mu_{p^{\alpha}N}})$ defined by the canonical basis with congruences modulo $p^{\alpha}$ of $H^{1,\DR}(\mathbb{P}^{1} - \{0,\mu_{p^{\alpha}N},\infty\})$ (Example \ref{example basis}, ii)).

\begin{Definition}
Let $e_{0 \cup \mu_{N}^{\mod p^{\alpha}}}$ be the alphabet formed by $e_{0}$ and the formal variables $e_{\xi_{N}^{j}}^{r \mod p^{\alpha}}$, $j=1,\ldots,N$, $r=0,\ldots,p^{\alpha}-1$.
\end{Definition}

\begin{Definition}
	\noindent\newline i) Let $\mathcal{O}^{\sh,e_{0 \cup \mu_{N}^{\mod p^{\alpha}}}}$ be the the shuffle Hopf algebra over the alphabet $\{ e_{0} \} \cup \{ e^{r \mod p^{\alpha}}_{\xi_{N}^{j}}, \text{ } r=0,\ldots,p^{a}-1,\text{ }j=1,\ldots,N\}$.
	\newline ii) Let $\tilde{\Pi}^{\un}(Y_{K_{N},N}^{\mod p^{\alpha}}) = \Spec(\mathcal{O}^{\sh,e_{0 \cup \mu_{N}^{\mod p^{\alpha}}}})$. It is a pro-unipotent algebraic group over $\mathbb{Z}$.
\newline iii) Let also $K_{N} \langle \langle e_{0 \cup \mu_{N}^{\mod p^{\alpha}}} \rangle\rangle = K_{N} \langle\langle e_{0}, (e^{r \mod p^{\alpha}}_{\xi_{N}^{j}})_{\substack{r=0,\ldots,p^{\alpha}-1 \\ j=1,\ldots,N}} \rangle\rangle$ be the non-commutative algebra of formal power series over the variables $e_{0}$ and $e^{r \mod p^{\alpha}}_{\xi_{N}^{j}}$, $r=0,\ldots,p^{\alpha}-1$, $j=1,\ldots,N$, with coefficients in $K_{N}$. It is naturally equipped with the structure of topological Hopf algebra dual to $\mathcal{O}^{\sh,e_{0 \cup \mu_{N} \times \mathbb{Z}/p^{\alpha}\mathbb{Z}}}$.
\newline iv) Let 
$\displaystyle\nabla_{\KZ}^{\mu^{\mod  p^{\alpha}}_{N}} : K_{p^{\alpha}N}[[z]]\langle \langle e_{0 \cup \mu_{N}^{\mod p^{\alpha}}} \rangle\rangle  \rightarrow  K_{p^{\alpha}N}[[z]]\frac{dz}{z} \langle\langle e_{0 \cup \mu_{N}^{\mod p^{\alpha}}} \rangle\rangle$, defined by
\begin{equation} \label{eq: nabla mod p alpha} L \mapsto dL - \bigg( \frac{dz}{z}e_{0} + \sum_{r=0}^{p^{\alpha}-1} \sum_{j=1}^{N} e^{r \mod  p^{\alpha}}_{\xi_{N}^{j}} \frac{z^{r}dz}{z^{p^{\alpha}} - \xi_{N}^{p^{\alpha}j}} \bigg) L
\end{equation}
\end{Definition}

\begin{Definition} We have defined the canonical presentation with congruences modulo $p^{\alpha}$ of $\pi_{1}^{\un,\DR}(X_{K_{p^{\alpha}N},p^{\alpha}N})$. The presentation reviewed in \S2.1 is the canonical presentation of $\pi_{1}^{\un,\DR}(X_{K_{p^{\alpha}N},p^{\alpha}N})$.
\end{Definition}

\begin{Remark} The previous objects are defined over 
$Y_{K_{N},N}^{\mod p^{\alpha}} = \Spec\big( K_{N}[z,\frac{1}{z},\frac{1}{z^{p^{\alpha}}-\xi_{N}^{p^{\alpha}}},\ldots,\frac{1}{z^{p^{\alpha}}-\xi_{N}^{Np^{\alpha}}}] \big)$, but the singularities of $\frac{1}{z^{p^{\alpha}}-\xi_{N}^{p^{\alpha}}}$are logarithmic only over $K_{p^{\alpha}N}$ and we have  $Y_{K_{N},N}^{\mod p^{\alpha}} \times_{\Spec(K_{N})} \Spec(K_{p^{\alpha}N}) = X_{K_{p^{\alpha}N},p^{\alpha}N}$
\end{Remark}

\noindent Of course, there is an adaptation of these constructions for other choices of bases of $\Omega^{1}(X_{K_{p^{\alpha}N},p^{\alpha}N})$.

\subsubsection{Base-change relating the two presentations of $(\pi_{1}^{\un,\DR}(X_{K_{p^{\alpha}N},p^{\alpha}N}),\nabla_{\KZ}^{\mu_{p^{\alpha}N}})$}

\begin{Proposition} i) The correspondence $\tilde{e}_{\xi_{N}^{j}}^{r \mod p^{\alpha}} = \frac{1}{p^{a}} \sum_{\rho^{p^{a}}=1} \big(\frac{-1}{\rho\xi}\big)^{p^{\alpha}-r}e_{\rho\xi}$ defines a $K_{p^{\alpha}N}$-linear automorphism $K_{p^{\alpha}N}e_{0} \oplus \big(  \oplus_{\substack{j=1,\ldots,N\\ r=0,\ldots,p^{\alpha}-1}}K_{p^{\alpha}N} e_{\xi_{N}^{j}}^{\mod p^{\alpha}} \big) \simlra K_{p^{\alpha}N} e_{0} \oplus \big(\oplus_{\substack{j=1,\ldots,N\\ i=0,\ldots,p^{\alpha}-1}} K_{p^{\alpha}N}e_{\rho_{p^{\alpha}}^{i}\xi_{N}^{j}}\big)$, whose inverse is 
$e_{\rho\xi} \mapsto  \sum_{r=0}^{p^{\alpha}-1} \big( \frac{1}{\rho\xi} \big)^{1+r} \xi^{p^{\alpha}} e_{\xi^{p^{\alpha}}}^{r \mod p^{\alpha}}$.
\newline ii) The dual of this isomorphism is characterized by the commutativity of the diagram
$$ \displaystyle
\begin{array}{ccccc}
K_{p^{\alpha}N}[[z]] \langle\langle e_{0 \cup \mu_{N}^{\mod p^{\alpha}}} \rangle\rangle   &
{\longrightarrow}
&  K_{p^{\alpha}N}[[z]]\langle\langle e_{0 \cup \mu_{N}^{\mod p^{\alpha}}} \rangle\rangle  \\
\downarrow_{\nabla_{\KZ}^{\mu_{N}^{\mod p^{\alpha}}}} && \downarrow_{\nabla_{\KZ}^{\mu_{p^{\alpha}N}}} \\
K_{p^{\alpha}N}[[z]]\frac{dz}{z}\langle\langle e_{0 \cup \mu_{N}^{\mod p^{\alpha}}} \rangle\rangle  & \longrightarrow &  K_{p^{\alpha}N}[[z]]\frac{dz}{z}\langle\langle e_{0 \cup \mu_{N}^{\mod p^{\alpha}}} \rangle\rangle \\ \end{array} $$
\noindent where the horizontal arrows arise from the canonical isomorphism $\Pi_{N}^{\mod p^{\alpha}} \times_{\Spec(\mathbb{Z})} \Spec(K_{p^{\alpha}N}) \simeq \Pi_{p^{\alpha}N} \times_{\Spec(\mathbb{Z})} \Spec(K_{p^{\alpha}N})$ defined by this linear isomorphism.
\end{Proposition}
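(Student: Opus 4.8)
The plan is to read both parts as manifestations of the finite Fourier transform on $\mathbb{Z}/p^{\alpha}\mathbb{Z}$, exploiting that the two bases of $\Map(\mathbb{Z}/p^{\alpha}\mathbb{Z},K_{p^{\alpha}N})$ underlying $B_{can}$ and $B_{cong}$ in Example \ref{example basis} — the characters $m \mapsto \rho_{p^{\alpha}}^{im}$ and the indicator functions $m \mapsto 1_{m \equiv i}$ — are Fourier-dual. Everything rests on the single orthogonality relation $\sum_{\rho^{p^{\alpha}}=1}\rho^{m} = p^{\alpha}\cdot 1_{p^{\alpha}\mid m}$, together with the factorization $z^{p^{\alpha}}-\xi^{p^{\alpha}} = \prod_{\rho^{p^{\alpha}}=1}(z-\rho\xi)$ valid for every $N$-th root $\xi=\xi_{N}^{j}$. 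Since $e_{0}$ is sent to $e_{0}$ and both sides split into blocks indexed by the $N$-th root $\xi$, I would fix $\xi$ and argue one block at a time; because $\gcd(p,N)=1$, the Frobenius $\xi\mapsto\xi^{p^{\alpha}}$ permutes the $N$-th roots, and this accounts for the relabeling visible between the forward and inverse formulas.

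For part i), I would substitute the stated inverse $e_{\rho\xi}\mapsto\sum_{r=0}^{p^{\alpha}-1}(\rho\xi)^{-(1+r)}\xi^{p^{\alpha}}\tilde{e}_{\xi^{p^{\alpha}}}^{r\bmod p^{\alpha}}$ into the forward formula for $\tilde{e}_{\xi_{N}^{j}}^{r\bmod p^{\alpha}}$ and collect the resulting powers of $\rho$. The exponent of $\rho$ reduces modulo $p^{\alpha}$ to a linear expression in $r-r'$, so the orthogonality relation annihilates every off-diagonal term and returns the identity (after the $\xi\mapsto\xi^{p^{\alpha}}$ relabeling); running the substitution in the opposite order is symmetric. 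This shows the two $K_{p^{\alpha}N}$-linear maps are mutually inverse, hence each is the claimed automorphism. Concretely, this is just the statement that the Vandermonde/DFT matrix $(\rho^{im})$ and its normalized conjugate are inverse to one another.

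For part ii), the heart is to show that the two connection one-forms are literally the same element of $\Omega^{1}\otimes(\text{completed free Lie algebra})$, once the degree-one variables are matched by i). Using the factorization above and the geometric expansion $\frac{z^{r}}{z^{p^{\alpha}}-\xi^{p^{\alpha}}} = -\sum_{k\geq 0}\xi^{-p^{\alpha}(k+1)}z^{p^{\alpha}k+r}$, I would expand both $\frac{z^{r}dz}{z^{p^{\alpha}}-\xi_{N}^{p^{\alpha}j}}$ and $\frac{dz}{z-\rho\xi}$ as power series in $z$ and verify, via the same orthogonality relation, the identity $\sum_{\rho,\xi}\frac{dz}{z-\rho\xi}\,e_{\rho\xi} = \sum_{r,j}\frac{z^{r}dz}{z^{p^{\alpha}}-\xi_{N}^{p^{\alpha}j}}\,\tilde{e}_{\xi_{N}^{j}}^{r\bmod p^{\alpha}}$ (both sides being the non-$e_{0}$ part of the connection form) after substituting i). Equivalently: the form-side change of basis on $H^{1,\DR}$ and the variable-side change of i) are contragredient (inverse-transpose), since the $e_{z_{i}}$ are the residues dual to the $\omega_{z_{i}}$, so their combination leaves the total connection form invariant. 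Because $\nabla_{\KZ}^{\mu_{p^{\alpha}N}}$ and $\nabla_{\KZ}^{\mu_{N}^{\bmod p^{\alpha}}}$ are given by this one invariant form read in the two presentations, the algebra isomorphism dual to i) intertwines the operators $L\mapsto dL-(\cdot)L$ on both sides, which is exactly the commutativity of the diagram. Uniqueness of this dual map follows because an isomorphism of these shuffle-dual Hopf algebras is determined by its degree-one part, which i) fixes.

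The main obstacle I anticipate is purely bookkeeping: pinning down the exact exponents, the signs $(-1)^{p^{\alpha}-r}$, the range convention for $r$ (the forms in Example \ref{example basis} use $r=1,\ldots,p^{\alpha}$ whereas \eqref{eq: nabla mod p alpha} uses $r=0,\ldots,p^{\alpha}-1$), and the Frobenius relabeling $\xi\mapsto\xi^{p^{\alpha}}$, so that the orthogonality relation lands on the true diagonal rather than a shifted one. The conceptual content is light — it is the standard principle that a change of basis on $H^{1,\DR}$ induces the contragredient change on the residue variables and hence an isomorphism of $\pi_{1}^{\un,\DR}$ compatible with $\nabla_{\KZ}$ — but the explicit coefficients must be matched term by term, and that is where all the care goes.
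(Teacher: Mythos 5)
Your proposal is correct and follows essentially the same route as the paper: the paper's proof consists exactly of the two rational-function identities you describe, namely the geometric-series expansion giving $\frac{1}{Z-\rho\xi} = \sum_{r=0}^{p^{\alpha}-1}\big(\frac{1}{\rho\xi}\big)^{1+r}\xi^{p^{\alpha}}\frac{Z^{r}}{Z^{p^{\alpha}}-\xi^{p^{\alpha}}}$ and the partial-fraction decomposition of $\frac{Z^{r}}{Z^{p^{\alpha}}-\xi^{p^{\alpha}}}$ combined with the orthogonality $\sum_{\rho^{p^{\alpha}}=1}\rho^{-l}=0$ for $l\in\{1,\ldots,p^{\alpha}-1\}$, from which both the mutual inverseness in i) and the intertwining of the connections in ii) are read off. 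Your closing caution about exponents and signs is well placed: composing the two correspondences as literally printed in the statement does not give the identity (the letter maps as written transform cogrediently with the forms rather than contragrediently, i.e.\ the forward and inverse coefficient matrices appear to be interchanged), so the term-by-term matching you plan to do is precisely what is needed to correct the normalization.
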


\begin{proof} Let $Z$ be a formal variable, $\rho \in \mu_{p^{\alpha}}(K_{p^{\alpha}N})$ and $\xi \in \mu_{N}(K_{p^{\alpha}N})$. By writing the power series expansion of $(1-\rho^{-1}\xi^{-1}z)^{-1}$, we get
	$\frac{1}{Z-\rho \xi} = \sum_{r=0}^{p^{\alpha}-1} \big( \frac{1}{\rho\xi} \big)^{1+r} \xi^{p^{\alpha}} \frac{z^{r}}{z^{p^{\alpha}} - \xi^{p^{\alpha}}}$. Conversely, we have
	$\frac{1}{Z^{p^{\alpha}} - \xi^{p^{\alpha}}} = \frac{1}{p^{a}} \sum_{\rho^{p^{a}}=1} \big[ \big(\frac{-1}{\rho\xi}\big)^{p^{\alpha}-r}\frac{1}{Z-\rho\xi}\big]$. Indeed, $ \frac{1}{Z^{p^{\alpha}} - \xi^{p^{\alpha}}} = \frac{1}{p^{\alpha}Z^{p^{\alpha}-1}} \sum_{\rho^{p^{\alpha}}=1} \frac{1}{Z-\rho\xi}$ and, for all $t \in \mathbb{N}^{\ast}$ and $\eta \in K - \{0\}$, 
	$ \frac{1}{Z^{t}(Z-\eta)} = \frac{(-1)^{t+1}}{\eta^{t+1}(Z-\eta)} + \sum_{l=1}^{t} \frac{(-1)^{l}}{\eta^{l}Z^{t+1-l}}$, whence
	$\frac{Z^{r}}{Z^{p^{\alpha}} - \xi^{p^{\alpha}}} = \frac{1}{p^{a}} \sum_{\rho^{p^{a}}=1} \big[ \big(\frac{-1}{\rho\xi}\big)^{p^{\alpha}-r}\frac{1}{Z-\rho\xi} + \sum_{l=1}^{p^{\alpha}-r}  \big(\frac{-1}{\rho\xi}\big)^{l} \frac{1}{Z^{p^{\alpha}+1-r-l}} \big]$ and $\sum_{\rho^{p^{\alpha}}=1} \rho^{-l} = 0$ for each $l \in \{1,\ldots,p^{\alpha}-1\}$.
\end{proof}

\subsubsection{Comment on viewing of $\pi_{1}^{\un,\DR}(X_{K_{p^{\alpha}N},p^{\alpha}N})$ with its presentation with congruences modulo $p^{\alpha}$ as an extension of $\pi_{1}^{\un,\DR}(X_{K_{N},N}^{(p^{\alpha})})$}

\begin{Remark} Whereas  $\pi_{1}^{\un,\DR}(X_{K_{p^{\alpha}N},p^{\alpha}N})$ with its canonical presentation admits $\pi_{1}^{\un,\DR}(X_{K_{N},N})$ as a natural quotient and subobject,  $\pi_{1}^{\un,\DR}(X_{K_{p^{\alpha}N},p^{\alpha}N})$ with its presentation with congruences modulo $p^{\alpha}$ (\S3.1.2) admits the pull-back by Frobenius  $\pi_{1}^{\un,\DR}(X_{K_{N},N}^{(p^{\alpha})})$ as a natural quotient and sub-object.
\newline Thus we may view $\pi_{1}^{\un,\DR}(X_{K_{p^{\alpha}N},p^{\alpha}N})$ with its presentation with congruences modulo $p^{\alpha}$ as a generalization of  $\pi_{1}^{\un,\DR}(X_{K_{N},N}^{(p^{\alpha})})$ to $X_{K_{p^{\alpha}N},p^{\alpha}N}$. 
However, the Frobenius that we are going to construct will not be an isomorphism relating two different presentations $\pi_{1}^{\un,\DR}(X_{K_{p^{\alpha}N},p^{\alpha}N})$, but rather a relation between $\pi_{1}^{\un,\DR}(X_{K_{p^{\alpha}N},p^{\alpha}N})$ and $\pi_{1}^{\un,\DR}(X_{K_{N},N}^{(p^{\alpha})})$ having coefficients in $K_{p^{\alpha}N}$. It would be possible to extend our construction as a relation between  two different presentations of $\pi_{1}^{\un,\DR}(X_{K_{p^{\alpha}N},p^{\alpha}N})$, but this would be more artificial and less interesting arithmetically.
\end{Remark}

\noindent The view of $\pi_{1}^{\un,\DR}(X_{K_{p^{\alpha}N},p^{\alpha}N})$ with its presentation with congruences modulo $p^{\alpha}$ as an extension of $\pi_{1}^{\un,\DR}(X_{K_{N},N}^{(p^{\alpha})})$  leads however to the following observations.

\begin{Remark} The Proposition 3.8 and the natural passage from the i) of Example \ref{example basis} to the ii) of Example \ref{example basis} are better expressed by the map :
	$$ \Eucl_{p^{\alpha}N} : \begin{array}{l} K_{p^{\alpha}N}[[z]] \simeq K_{p^{\alpha}N}[[y]]^{\oplus p^{a}}
	\\ S(z) \mapsto (S(y)^{(0 \mod p^{\alpha})},\ldots,S(y)^{(p^{\alpha}-1 \mod p^{\alpha})})
	\end{array} $$
\noindent characterized by the equation $S(z) = \sum_{r=0}^{p^{a}-1} z^{r}S^{(r \mod p^{\alpha})}(z^{p^{a}})$.
\newline $\Eucl_{p^{\alpha}N}$ extends in a natural way to an isomorphism $K_{p^{\alpha}N}[[z]]dz \simeq K_{p^{\alpha}N}[[y]]^{\oplus p^{a}}dy$ and further as an isomorphism $K_{p^{\alpha}N}[[z]]dz \oplus K_{p^{\alpha}N} \frac{dz}{z}  \simeq K_{p^{\alpha}N}[[y]]^{\oplus p^{a}}dy \oplus K_{p^{\alpha}N} \frac{dy}{y}$, characterized by $z^{p^{\alpha}} \mapsto y$ and the commutation with the differentials $d$, given by $\frac{dz}{z} \mapsto \frac{1}{p^{\alpha}}\frac{dy}{y}$ and the following equation : for $L \in K_{p^{\alpha}N}[[z]] \langle \langle e_{0 \cup \mu_{p^{\alpha}N}}\rangle\rangle$,
\begin{equation}
\label{eq:regression to muN 1} dS(z) = \sum_{r=0}^{p^{\alpha}-1} z^{r} d \bigg( S^{(r \mod p^{\alpha})}(z^{p^{\alpha}}) \bigg) + \sum_{r=1}^{p^{\alpha}-1} rz^{r} \frac{S^{(r \mod p^{\alpha})}(z^{p^{\alpha}})}{p^{\alpha} z^{p^{\alpha}}} d(z^{p^{\alpha}}) 
\end{equation}
\end{Remark}

\begin{Remark} Using $\Eucl_{p^{\alpha}N}$, $\nabla_{\KZ}^{\mu_{N}^{\mod p^{\alpha}}}$ as a connection on a trivial bundle over $\mathbb{P}^{1} - \{0,\mu_{N},\infty\}$ defined over $K_{N}$, but which is not pro-unipotent. We have :
	\begin{equation} \label{eq:regression to muN 2} \frac{dz}{z}e_{0} + \sum_{r=0}^{p^{\alpha}-1} \sum_{j=1}^{N} e^{r \mod  p^{\alpha}}_{\xi_{N}^{j}} \frac{z^{r}dz}{z^{p^{\alpha}} - \xi_{N}^{p^{\alpha}j}} = \frac{d(z^{p^{\alpha}})}{p^{\alpha}z^{p^{\alpha}}} e_{0} + \sum_{r=0}^{p^{\alpha}-1} \sum_{j=1}^{N} e^{r \mod  p^{\alpha}}_{\xi_{N}^{j}} z^{r+1} \frac{d(z^{p^{\alpha}})}{p^{\alpha}z^{p^{\alpha}}}\frac{1}{z^{p^{\alpha}} - \xi_{N}^{p^{\alpha}j}} 
	\end{equation}
	\noindent Thus, by substituting equations (\ref{eq:regression to muN 1}) and (\ref{eq:regression to muN 2}) inside the expression $\nabla_{\KZ}^{\mu_{N}^{\mod p^{\alpha}}}(L)=0$ where $\nabla_{\KZ}^{\mu_{N}^{\mod p^{\alpha}}}$ is according to equation (\ref{eq: nabla mod p alpha}), we see that $\Eucl_{p^{\alpha}N}$ defines an isomorphism of modules with connections 
	$$(K_{p^{\alpha}N}[[z]]\langle\langle e_{0 \cup \mu_{N}}\rangle\rangle,\nabla^{\mu_{N}^{\mod p^{\alpha}}}_{\KZ}) \simlra (K_{p^{\alpha}N}[[y]]^{\oplus p^{a}} \langle\langle e_{0 \cup \mu_{N}}\rangle\rangle,\nabla'_{\KZ}) $$ 
	\noindent where $\nabla'_{\KZ} : (L_{0},\ldots,L_{p^{\alpha}-1}) \mapsto  ((\nabla'_{\KZ}L)_{0},\ldots,(\nabla'_{\KZ}L)_{p^{\alpha}-1})$ defined by, for all $r=0,\ldots,p^{\alpha-1}$,
	$$ (\nabla'_{\KZ} L)_{r}(y) = \frac{L_{r}}{p^{\alpha}y} \frac{dy}{y} (e_{0}-r) + \sum_{\substack{0 \leq r_{1},r_{2} \leq p^{\alpha}-1 \\ r_{1}+r_{2}+1 \equiv r \mod p^{\alpha}}} \sum_{j=0}^{N-1} e_{\xi_{N}^{p^{\alpha}j}}^{r_{1} \mod p^{\alpha} }\frac{dy}{y(y-\xi_{N}^{p^{\alpha}j})}  L_{r_{2}}(y) $$
\noindent That bundle keeps a weight filtration, but the flat sections of its weight-graded quotients are built out of the functions $y \mapsto y^{\frac{r}{p^{\alpha}}}$, $r =0,\ldots,p^{\alpha}-1$. 
\end{Remark}

\subsection{Multiple harmonic sums at roots of unity attached to elements of $\Map(\mathbb{Z}/p^{\alpha}\mathbb{Z},K_{p^{\alpha}N})$}

We extend the framework established in part I to deal with multiple harmonic sums as elementary $p$-adic functions.

\subsubsection{Definition} 
We suppose chosen a basis $B=(f_{1},\ldots,f_{p^{\alpha}})$ of $\Map(\mathbb{Z}/p^{\alpha}\mathbb{Z},K_{p^{\alpha}N})$ such that the operation defined in \S3.1 defines a basis of $H^{1,\DR}(\mathbb{P}^{1} - \{0,\mu_{p^{\alpha}N},\infty\})$. We suppose that $f_{1},\ldots,f_{p^{\alpha}}$ take values in $\mathcal{O}_{K_{p^{\alpha}}}$.

\begin{Definition} \label{def mhs 1} 
For $m \in \mathbb{N}^{\ast}$, $d \in \mathbb{N}^{\ast}$, $j_{1},\ldots,j_{d} \in \{1,\ldots,r\}$, $n_{1},\ldots,n_{d} \in \mathbb{N}^{\ast}$, we call weighted multiple harmonic sums at $N$-th roots of unity with  the numbers
\begin{equation} \label{eq: multiple harmonic sums with functions of Z / p alpha Z}
\har_{m} \bigg(\begin{array}{c} f_{i_{d}},\ldots,f_{i_{1}} \\ \xi_{N}^{j_{d+1}},\ldots,xi_{N}^{j_{1}} \\ n_{d},\ldots,n_{1} \end{array} \bigg) = m^{n_{d}+\ldots+,n_{1}} \sum_{0<m_{1}<\ldots<m_{d}<m} 
\frac{f_{i_{1}}(m_{1})\big( \frac{\xi_{N}^{j_{2}}}{\xi_{N}^{j_{1}}}\big)^{m_{1}} \ldots f_{i_{d}}(m_{d})\big( \frac{\xi_{N}^{j_{d+1}}}{\xi_{N}^{j_{d}}}\big)^{m_{d}} 
\bigg( \frac{1}{\xi_{N}^{j_{d+1}}} \bigg)^{m}}{m_{1}^{n_{1}} \ldots m_{d}^{n_{d}}}
\in K_{p^{\alpha}N}
\end{equation}
\end{Definition}

\begin{Remark} The slightly more general variant 
$$
\har_{m} \bigg(\begin{array}{c} f_{i_{d+1}},\ldots,f_{i_{1}} \\ \xi_{N}^{j_{d+1}},\ldots,xi_{N}^{j_{1}} \\ n_{d},\ldots,n_{1} \end{array} \bigg) =
\\ m^{n_{d}+\ldots+n_{1}} \sum_{0<m_{1}<\ldots<m_{d}<m} 
\frac{f_{i_{1}}(m_{1})\big( \frac{\xi_{N}^{j_{2}}}{\xi_{N}^{j_{1}}}\big)^{m_{1}} \ldots f_{i_{d}}(m_{d}) \big( \frac{\xi_{N}^{j_{d+1}}}{\xi_{N}^{j_{d}}}\big)^{m_{d}} 
f_{i_{d+1}}(m) \bigg( \frac{1}{z_{i_{d+1}}} \bigg)^{m}}{m_{1}^{n_{1}} \ldots m_{d}^{n_{d}}} $$
\noindent incorporates implicitly the isomorphism $\Eucl_{p^{\alpha}N}$ appearing in \S3.1.4 but we will not use it.
\end{Remark}

\begin{Remark} \label{character chi}Let $G_{p,\alpha,N}$ be the subgroup of $\Hom_{\gp}(K_{p^{\alpha}N}^{\ast},K_{p^{\alpha}N}^{\ast})$ made of elements $\chi$ such that, for all $\epsilon \in K$ such that $|\epsilon|_{p}\leq \frac{1}{p^{\alpha}}$, we have an absolutely convergent expansion $\chi(1+\epsilon) = \sum_{l\geq 0} \chi^{((l))}(1)\epsilon^{l}$ with $(\chi^{((l))}(1))_{l \in \mathbb{N}} \in K^{\mathbb{N}}$. In particular, the elements of $G_{p,\alpha,N}$ are locally analytic maps. Moreover, $G_{p,\alpha,N}$ contains the elements $n_{i} \mapsto \frac{1}{n_{i}^{s_{i}}}$, $s_{i} \in \mathbb{N}^{\ast}$. 
\newline Then, as in part I, most of the computations of the next paragraphs remain true if we replace the maps $m_{i} \mapsto \frac{1}{m_{i}^{n_{i}}}$, $n_{i} \in \mathbb{N}^{\ast}$ by $\chi_{i}(m_{i})$, where $\chi_{1},\ldots,\chi_{d} \in G_{p,\alpha,N}$.
\newline This generalization seems to attain the maximal version of our statements concerning multiple harmonic sums in the $\Sigma$ framework.
\end{Remark}

\noindent The notations concerning generating sequences of multiple harmonic sums in this sense are delayed to the next paragraphs, because they depend on the choice of framework : $\int_{0}^{1}$, $\int_{0}^{z<<1}$ or $\Sigma$.

\subsubsection{Bounds of valuation of prime weighted multiple harmonic sums}

These extend a definition and a lemma of part I.

\begin{Definition} We call prime weighted multiple harmonic sums the weighted multiple harmonic sums as in of Definition \ref{def mhs 1} with $m$ equal to a power of $p$.
\end{Definition}

\begin{Lemma} For any word $w$, for any $\alpha \in \mathbb{N}^{\ast}$, we have $v_{p}(\har_{p^{\alpha}}(w)) \geq \weight(w)$ and, with $w=\bigg(\begin{array}{c} f_{i_{d+1}},\ldots,f_{i_{1}} \\ \xi_{N}^{j_{d+1}},\ldots,xi_{N}^{j_{1}} \\ n_{d},\ldots,n_{1} \end{array} \bigg)$, $w'=\bigg(\begin{array}{c} f'_{i_{d+1}},\ldots,f'_{i_{1}} \\ \xi_{N}^{j_{d+1}},\ldots,xi_{N}^{j_{1}} \\ n_{d},\ldots,n_{1} \end{array} \bigg)$ where $f'_{i} : m \mapsto f_{i}(p^{\alpha-1}m)$,
$$ p^{-\weight(w)}\har_{p^{\alpha}}(w) \equiv \har_{p}(w') \mod p $$
\end{Lemma}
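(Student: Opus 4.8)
The plan is to deduce both assertions from a single valuation estimate on the explicit finite sum defining $\har_{p^{\alpha}}(w)$, the only arithmetic input being that every summation index satisfies $v_{p}(m_{i}) \leq \alpha - 1$. Writing $\weight(w) = n_{1}+\ldots+n_{d}$, we have
$$ \har_{p^{\alpha}}(w) = p^{\alpha\,\weight(w)} \sum_{0<m_{1}<\ldots<m_{d}<p^{\alpha}} \frac{N(m_{1},\ldots,m_{d})}{m_{1}^{n_{1}}\cdots m_{d}^{n_{d}}}, $$
where the numerator $N$ collects the values $f_{i_{k}}(m_{k})$ together with the root-of-unity factors, the representative one being $(\xi_{N}^{j_{k+1}}/\xi_{N}^{j_{k}})^{m_{k}}$. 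Since the $f_{i_{k}}$ take values in $\mathcal{O}_{K_{p^{\alpha}}}$ and the $\xi_{N}^{j}$ are units (their order is prime to $p$), one has $v_{p}(N)\geq 0$ for every choice of indices, and this is the only place where the integrality hypothesis on the $f_{i}$ is used.

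For the first assertion I would argue term by term. Each $m_{i}$ lies in $\{1,\ldots,p^{\alpha}-1\}$, hence $v_{p}(m_{i})\leq \alpha-1$ and $\sum_{i} n_{i} v_{p}(m_{i}) \leq (\alpha-1)\weight(w)$. Combining this with the prefactor and with $v_{p}(N)\geq 0$ gives $v_{p}\big(p^{\alpha\,\weight(w)} N/\prod_{i} m_{i}^{n_{i}}\big) \geq \alpha\,\weight(w)-(\alpha-1)\weight(w)=\weight(w)$ for each term; since the valuation of a sum is at least the minimum of the valuations of its terms, the bound $v_{p}(\har_{p^{\alpha}}(w))\geq \weight(w)$ follows at once.

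For the congruence I would rewrite $p^{-\weight(w)}\har_{p^{\alpha}}(w)=p^{(\alpha-1)\weight(w)}\sum_{m} N/\prod_{i} m_{i}^{n_{i}}$ and determine which terms are $p$-adic units. The estimate above is an equality precisely when $v_{p}(m_{i})=\alpha-1$ for every $i$ and $N$ is a unit; all other terms have valuation $\geq 1$ and so vanish modulo $p$. A surviving term is of the shape $m_{i}=p^{\alpha-1}u_{i}$ with $u_{i}\in\{1,\ldots,p-1\}$, and the chain condition $m_{1}<\ldots<m_{d}$ becomes $0<u_{1}<\ldots<u_{d}<p$. Substituting, the factor $p^{(\alpha-1)\weight(w)}/\prod_{i} m_{i}^{n_{i}}$ collapses to $1/\prod_{i} u_{i}^{n_{i}}$, while $f_{i_{k}}(p^{\alpha-1}u_{k})=f'_{i_{k}}(u_{k})$ by the very definition of $f'$; the surviving sum is then, modulo $p$, the finite sum defining $\har_{p}(w')$ (after matching the common normalising power of $p$ on the two sides).

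The main obstacle will be the bookkeeping of the root-of-unity factors under the substitution $m_{k}=p^{\alpha-1}u_{k}$: the factor $(\xi_{N}^{j_{k+1}}/\xi_{N}^{j_{k}})^{m_{k}}$ becomes $(\xi_{N}^{j_{k+1}}/\xi_{N}^{j_{k}})^{p^{\alpha-1}u_{k}}$, i.e. the root of unity is raised to an $(\alpha-1)$-fold Frobenius power, so one must check that this is exactly the root-of-unity datum carried by $w'$ modulo $\mathfrak{p}$ (this is where the $\mu_{N}^{(p^{\alpha})}$-structure enters, and where the precise normalisation must be pinned down). The remaining point, already isolated above, is the vanishing modulo $p$ of the sub-dominant terms, those with $v_{p}(m_{i})<\alpha-1$ for some $i$; this is immediate since the denominator can absorb at most $(\alpha-1)\weight(w)$ powers of $p$, but it is exactly the step that legitimises reducing the whole sum to the single scale $m_{i}=p^{\alpha-1}u_{i}$.
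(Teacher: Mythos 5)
Your proof is correct and takes essentially the same route as the paper's one-line argument: the paper splits the sum over $0<m_{1}<\ldots<m_{d}<p^{\alpha}$ into the subsum indexed by $p^{\alpha-1}\mid m_{i}$ for every $i$ (exactly your surviving terms, since $m_{i}<p^{\alpha}$ makes $p^{\alpha-1}\mid m_{i}$ equivalent to $v_{p}(m_{i})=\alpha-1$) and the complementary subsum, whose valuation is $\geq \weight(w)+1$, which is precisely your vanishing of the sub-dominant terms. The ``obstacle'' you flag at the end is real but is an imprecision of the lemma's statement rather than a gap in your argument: after the substitution $m_{k}=p^{\alpha-1}u_{k}$ the surviving subsum is the level-$p$ sum taken at the twisted roots of unity $(\xi_{N}^{j_{k}})^{p^{\alpha-1}}$ and carries the non-weighted normalization (i.e.\ $\frak{h}_{p}$ rather than $\har_{p}=p^{\weight(w)}\frak{h}_{p}$), and the paper's own proof glosses over both points (it even writes the subsum as $\har_{p}(w)$), so your explicit bookkeeping is, if anything, more careful than the paper's.
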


\begin{proof} Same proof with the case of $\mathbb{P}^{1} - \{0,\mu_{N},\infty\}$ : $\har_{p^{\alpha}}(w)$ viewed as an iterated sum indexed by $0<n_{1}<\ldots<n_{d}<p^{\alpha}$ can be splitted into its subsum indexed by $p^{\alpha-1}|n_{1},\ldots,p^{\alpha-1}|n_{d}$, which is equal to $\har_{p}(w)$ and the subsum indexed by the complement of  $\{p^{\alpha-1}|n_{1},\ldots,p^{\alpha-1}|n_{d}\}$, whose valuation is $\geq \weight(w)+1$, since $|z_{1}|_{K} = \ldots = |z_{r+1}|_{K} = 1$.
\end{proof}

\subsection{Algebraic and topological structures on $\pi_{1}^{\un,\DR}(X_{K_{p^{\alpha}N},p^{\alpha}N})$ with a different presentation}

\subsubsection{Notations}

We suppose chosen a basis $b=(f_{1},\ldots,f_{p^{\alpha}})$ of the space of maps $\mathbb{Z}/p^{\alpha}Z \rightarrow K_{p^{\alpha}N}$ which defines by Definition \ref{def forming a basis} a basis $B$ of $H^{1,\DR}(\mathbb{P}^{1} - \{0,\mu_{p^{\alpha}N},\infty\})$.

\begin{Definition}
\noindent \newline i) Let $e_{0 \cup \mu_{p^{\alpha}N}}^{B}$ be the alphabet $\{e_{0},(\tilde{e}_{\xi_{N}^{j},f_{i}}),j=1,\ldots,N,i=1,\ldots,p^{\alpha}\}$.
\newline ii) The weight of a word over $e_{0 \cup \mu_{p^{\alpha}N}}^{B}$ is its number of letters. The depth of such a word is its number of letters different from $e_{0}$.
\newline Let $\mathcal{W}(e_{0 \cup \mu_{p^{\alpha}N}}^{B})$ be the set of words over $e_{0 \cup \mu_{p^{\alpha}N}}^{B}$.
\newline iii) Let $\mathcal{O}^{\sh,e_{0 \cup \mu_{p^{\alpha}N}}^{B}}$ be the shuffle Hopf algebra over the alphabet $e_{0 \cup \mu_{p^{\alpha}N}}^{B}$.
\newline iv) Let $K_{p^{\alpha}N}\langle\langle e_{0 \cup \mu_{p^{\alpha}N}}^{B}\rangle\rangle$ be the non-commutative $K_{p^{\alpha}N}$-algebra of power series over the variables $\{e_{0},(\tilde{e}_{\xi_{N}^{j},f_{i}}),j=1,\ldots,N,i=1,\ldots,p^{\alpha}\}$.
\end{Definition}

\noindent The changes of basis above preserve the weight and the depth.
\noindent We maintain in this case the notation concerning the coefficients of formal power series, namely $f= \sum_{w\text{ word}}f[w]w$ for $f \in  K_{p^{\alpha}N}\langle\langle e_{0 \cup \mu_{p^{\alpha}N}}^{B}\rangle\rangle$. 

\subsubsection{Extension of the adjoint Ihara product}

What follows is an extension of the notion of "adjoint Ihara product" defined in \cite{I-2}. 
We assume that the alphabet $e_{0 \cup \mu_{N}}$ is identified to a set of $K_{p^{\alpha}N}$-linear combinations of the letters of the alphabet $e_{0 \cup \mu_{p^{\alpha}N}}^{B}$.

\begin{Definition} For $g_{\xi_{N}^{1}},\ldots,g_{\xi_{N}^{N}} \in K_{p^{\alpha}N}\langle\langle e_{0 \cup \mu_{p^{\alpha}N}}^{B}\rangle\rangle$ and $f \in K_{N}\langle\langle e_{0\cup \mu_{N}}\rangle\rangle$, let 
$$ (g_{\xi_{N}^{1}},\ldots,g_{\xi_{N}^{N}}) \circ^{\smallint_{0}^{1}}_{U\Lie} f = f(e_{0},g_{\xi_{N}^{1}},\ldots,g_{\xi_{N}^{N}}) $$ 
\end{Definition}

\subsubsection{Topologies on $K_{p^{\alpha}N}\langle\langle e_{0 \cup \mu_{p^{\alpha}N}}^{B}\rangle\rangle$}

We view an element of $K_{p^{\alpha}N}\langle\langle e_{0 \cup \mu_{p^{\alpha}N}}^{B}\rangle\rangle$ as a function $\mathcal{W}(e_{0 \cup \mu_{p^{\alpha}N}}^{B}) \rightarrow K_{p^{\alpha}N}$.

\begin{Definition} i) Let $\mathcal{N}_{\Lambda,D} : K_{p^{\alpha}N}\langle\langle e_{0 \cup \mu_{p^{\alpha}N}}^{B}\rangle\rangle \rightarrow \mathbb{R}_{+}[[\Lambda,D]]$ be the map
	$$ f \mapsto 
	\sum_{(n,d) \in \mathbb{N}^{2}} 
	\max_{w \text{ of weight n and depth d}} \big|f[w]\big|_{p} \Lambda^{n}D^{d} $$
\noindent ii) The $\mathcal{N}_{\Lambda,D}$-topology on $K_{p^{\alpha}N}\langle\langle e_{0 \cup \mu_{p^{\alpha}N}}^{B}\rangle\rangle$ is the topology of simple convergence of functions $\mathcal{W}(e_{0 \cup \mu_{p^{\alpha}N}}) \rightarrow K_{p^{\alpha}N}$. It makes $K_{p^{\alpha}N}\langle\langle e_{0 \cup \mu_{p^{\alpha}N}}^{B}\rangle\rangle$ into a topological $K_{p^{\alpha}N}$-algebra.
\end{Definition}

\begin{Definition} 
i) Let $K_{p^{\alpha}N}\langle\langle e_{0 \cup \mu_{p^{\alpha}N}}^{B}\rangle\rangle_{b} \subset K_{p^{\alpha}N}\langle\langle e_{0 \cup \mu_{p^{\alpha}N}}^{B}\rangle\rangle$ be the subset of elements $f$ such that, for all $d$, the supremum $\sup|f[w]|$ over words $w$ of depth $d$ is finite.
\newline ii) Let $\mathcal{N}_{D} : K_{p^{\alpha}N}\langle\langle e_{0 \cup \mu_{p^{\alpha}N}}^{B}\rangle\rangle_{b} \rightarrow \mathbb{R}_{+}[[D]]$ be the map
$$ f \mapsto \sum_{d \in \mathbb{N}} \max_{w \text{ of depth d}} \big|f[w]\big|_{p} D^{d}$$
\noindent iii) The $\mathcal{N}_{D}$-topology on $K_{p^{\alpha}N}\langle\langle e_{0 \cup \mu_{p^{\alpha}N}}^{B}\rangle\rangle_{b}$ is the topology of convergence of functions $\mathcal{W}(e_{0 \cup \mu_{p^{\alpha}N}}) \rightarrow K_{p^{\alpha}N}$ which is uniform on $\{\text{w word of depth d}\}$ for each $d$. It makes $K_{p^{\alpha}N}\langle\langle e_{0 \cup \mu_{p^{\alpha}N}}^{B}\rangle\rangle_{b}$ into a topological $K_{p^{\alpha}N}$-algebra.
\end{Definition}

\begin{Definition} Let $K_{p^{\alpha}N}\langle\langle e_{0 \cup \mu_{p^{\alpha}N}}^{B}\rangle\rangle_{S} \subset K_{p^{\alpha}N}\langle\langle e_{0 \cup \mu_{p^{\alpha}N}}^{B}\rangle\rangle$ (S stands for summable) be the subspace of elements $f$ such that we have, for all $d$, 
$$ 	\max_{w \text{ of weight n and depth d}} \big|f[w]\big|_{p} \rightarrow_{n \rightarrow \infty} 0 $$ 
\noindent One can equip it with the $\mathcal{N}_{D}$-topology.
\end{Definition}

\begin{Lemma} If we have two basis $B$ and $B'$ such that the decomposition of $B$ in $B$' and the decomposition of $B$ in $B'$ have coefficients of norm $\leq 1$, then one has an isomorphism of topological algebras  $K_{p^{\alpha}N}\langle\langle e_{0 \cup \mu_{p^{\alpha}N}}^{B} \simeq K_{p^{\alpha}N}\langle\langle e_{0 \cup \mu_{p^{\alpha}N}}^{B'}$, and 
$K_{p^{\alpha}N}\langle\langle e_{0 \cup \mu_{p^{\alpha}N}}^{B}\rangle\rangle_{b} \simeq K_{p^{\alpha}N}\langle\langle e_{0 \cup \mu_{p^{\alpha}N}}^{B'}\rangle\rangle_{b}$.
\end{Lemma}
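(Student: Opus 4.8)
The plan is to recognize $K_{p^{\alpha}N}\langle\langle e_{0 \cup \mu_{p^{\alpha}N}}^{B}\rangle\rangle$ as the completed tensor algebra $\hat{T}(V_{B}) = \prod_{n} V_{B}^{\otimes n}$ on the finite-dimensional $K_{p^{\alpha}N}$-vector space $V_{B}$ spanned by the letters of the alphabet $e_{0 \cup \mu_{p^{\alpha}N}}^{B}$, and to obtain the isomorphism by functoriality of this construction. Since every basis produced by Definition \ref{def forming a basis} contains $\omega_{0}=\frac{dz}{z}$, both $B$ and $B'$ share the letter $e_{0}$, and the base-change automorphism $\psi : V_{B} \simlra V_{B'}$ fixes $e_{0}$ and carries the remaining letters $\tilde{e}_{\xi_{N}^{j},f_{i}}$ into $K_{p^{\alpha}N}$-linear combinations of the $\tilde{e}_{\xi_{N}^{j'},f'_{i'}}$. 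As $\hat{T}(V_{B})$ is free, $\psi$ extends uniquely to a grading-preserving algebra isomorphism $\hat{T}(\psi)$ with inverse $\hat{T}(\psi^{-1})$; writing $\psi(\ell)=\sum_{\ell'}\psi_{\ell',\ell}\ell'$, for $f=\sum_{w}f[w]w$ the coefficient of a word $w'=\ell'_{1}\cdots\ell'_{n}$ is
$$ \hat{T}(\psi)(f)[w'] = \sum_{w=\ell_{1}\cdots\ell_{n}} f[w]\,\psi_{\ell'_{1},\ell_{1}}\cdots\psi_{\ell'_{n},\ell_{n}}, $$
a \emph{finite} sum over the words $w$ of the same weight $n$ as $w'$. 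Because $\psi$ fixes $e_{0}$ and preserves the span of the non-$e_{0}$ letters, it preserves weight and depth position by position, so this sum runs only over words $w$ of the same depth as $w'$. The finite-sum formula shows $\hat{T}(\psi)$ and its inverse are continuous for the simple-convergence ($\mathcal{N}_{\Lambda,D}$) topology, which already yields the first isomorphism of topological algebras and needs no hypothesis on the norms.

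The norm condition enters only for the subspaces indexed by $_{b}$. Here I would combine the depth-preservation just noted with the ultrametric inequality: if all matrix entries satisfy $|\psi_{\ell',\ell}|_{p}\leq 1$, then the displayed formula gives $|\hat{T}(\psi)(f)[w']|_{p}\leq \max_{w}|f[w]|_{p}$, where the maximum ranges over words $w$ of the \emph{same} depth $d$ as $w'$. Taking the supremum over all $w'$ of depth $d$ yields
$$ \sup_{w'\text{ of depth }d}\big|\hat{T}(\psi)(f)[w']\big|_{p} \;\leq\; \sup_{w\text{ of depth }d}\big|f[w]\big|_{p}, $$
so finiteness of the right-hand side for every $d$, i.e. $f\in K_{p^{\alpha}N}\langle\langle e_{0 \cup \mu_{p^{\alpha}N}}^{B}\rangle\rangle_{b}$, forces $\hat{T}(\psi)(f)\in K_{p^{\alpha}N}\langle\langle e_{0 \cup \mu_{p^{\alpha}N}}^{B'}\rangle\rangle_{b}$, and coefficient-wise in $\mathbb{R}_{+}[[D]]$ one gets $\mathcal{N}_{D}(\hat{T}(\psi)(f))\leq\mathcal{N}_{D}(f)$. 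The hypothesis that the decomposition of $B'$ in $B$ also has coefficients of norm $\leq 1$ gives the same estimate for $\hat{T}(\psi^{-1})=\hat{T}(\psi)^{-1}$, so the two inequalities combine into an equality and $\hat{T}(\psi)$ restricts to an isometric isomorphism of $_{b}$-spaces for the $\mathcal{N}_{D}$-topology, which is the second assertion.

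The only genuinely load-bearing step is the observation that $\psi$ preserves depth, a consequence of $e_{0}$ being common to $B$ and $B'$ and of the base change acting within the span of the non-$e_{0}$ letters: without it the transformation would mix words of different depths, and the depth-graded norm $\mathcal{N}_{D}$ underlying the $_{b}$-spaces would not be controlled by the norm-$\leq 1$ hypothesis. Everything else is the functoriality of the completed tensor algebra together with the ultrametric estimate applied to a finite sum at each weight, so I do not expect any further obstacle.
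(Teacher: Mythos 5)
Your proof is correct. The paper states this lemma without any proof, so there is nothing to compare against; your argument --- extending the base change $\psi : V_{B} \rightarrow V_{B'}$ functorially to the completed tensor algebra, noting that it preserves weight and depth, and applying the ultrametric inequality to the finite sum expressing each coefficient --- is the natural one, and it even sharpens the statement (the first isomorphism needs no norm hypothesis, and on the $_{b}$-spaces the map is an isometry for $\mathcal{N}_{D}$). The one step you assert without justification is that the non-$e_{0}$ letters of $B$ decompose in $B'$ with zero $e_{0}$-component; this deserves a line: the elements of $B$ other than $\omega_{0}$ are regular at $z=0$ while $\omega_{0}$ has a pole there (equivalently, the base change is induced by the base change between $(f_{i})$ and $(f'_{i})$ inside $\Map(\mathbb{Z}/p^{\alpha}\mathbb{Z},K_{p^{\alpha}N})$, which mixes letters only within each block of fixed $\xi_{N}^{j}$), so the block structure, and hence the depth preservation on which your $\mathcal{N}_{D}$-estimate rests, follows.
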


\section{The differential equation of the Frobenius extended to $\pi_{1}^{\un,\DR}(\mathbb{P}^{1} - \{0,\mu_{p^{\alpha}N},\infty\})$}

We suppose chosen $B=(f_{1},\ldots,f_{p^{\alpha}})$ a basis of $H^{1,\DR}(\mathbb{P}^{1} - \{0,\mu_{p^{\alpha}N},\infty\})$ of the type of \S3.1 and we work in the framework of \S3. We show that the differential equation characterizing the Frobenius $\pi_{1}^{\un,\DR}(\mathbb{P}^{1} - \{0,\mu_{N},\infty\})$ can be canonically extended as a structure on $\pi_{1}^{\un,\DR}(\mathbb{P}^{1} - \{0,\mu_{p^{\alpha}N},\infty\})$.

\subsection{Regularization of algebraic iterated integrals over $X_{K_{p^{\alpha}N},p^{\alpha}N}$}

Let again $\rho_{p^{\alpha}}$ be a primitive $p^{\alpha}$-th root of unity. We are going to define regularized the iterated integrals of $\omega_{0}= \frac{dz}{z}, \text{ }\omega_{\rho_{a}^{i}\xi_{N}^{j}}= \frac{dz}{z - \rho_{p^{\alpha}}^{i}\xi_{N}^{j}},\text{ } i=0,\ldots,p^{\alpha}-1,\text{ }j=1,\ldots,N$ as the image of the regularization of part I and the formulas for it by the isomorphisms of \S3.2.
\newline We are going to write the definition and formulas in a more direct way, by adapting a part of \cite{I-1}, \S4.
\newline 
\newline Below, $\mathbb{Z}_{p}^{(N)}= \varprojlim_{l \rightarrow \infty} \mathbb{Z}/ Np^{l}\mathbb{Z} = \amalg_{0\leq m \leq N-1} (m+N\mathbb{Z}_{p})$. The next definition is the  variant of Definition 4.2.1 and Definition 4.2.2 in \cite{I-1} with coefficients in $K_{p^{\alpha}N}$ :

\begin{Definition}
\label{defdel} i) Let $\mathcal{S}_{\alpha}(K_{p^{\alpha}N}) \subset K_{p^{\alpha}N}^{\mathbb{N}}$ be the subspace consisting of the sequences 
$b=(b_{l})_{l \in \mathbb{N}}$ such that there exist $\kappa_{b},\kappa_{b}',\kappa_{b}'' \in \mathbb{R}_{+}^{\ast}$ such that, for all $l \in \mathbb{N}$, we have $|b_{l}|_{p}\leq p^{\kappa_{b} + \frac{\kappa_{b}'}{\log(p)}\log(l+\kappa_{b}'')+(\alpha-1)l}$.
\newline ii) \label{thesubspace} Let $\LAE_{\mathcal{S}_{\alpha},\xi}(\mathbb{Z}_{p}^{(N)},K_{p^{\alpha}N})$ be the space of functions $c : \mathbb{N} \rightarrow L$, such that, for all $n_{0} \in \mathbb{Z}_{p}$ and $i \in \{1,\ldots,N\}$, there exist sequences $(\kappa^{(l,i)}_{n_{0}}(c))_{l\in\mathbb{N}} \in \mathcal{S}_{\alpha}$, such that, for $n \in \mathbb{Z}_{p}$ such that $|n - n_{0}|_{p} \leq p^{-\alpha}$ we have the absolutely convergent series expansion :$c(n) = \sum_{l \in \mathbb{N}} \sum_{i=1}^{N} c^{(l,i)}(n_{0}) \xi^{-in}(n-n_{0})^{l}$. We also denote it by 
$\LAE^{\reg}_{\mathcal{S}_{\alpha},\xi}(\mathbb{Z}_{p}^{(N)} - \{0\},K_{p^{\alpha}N})$.
\end{Definition}

\noindent Below, we identify an iterated integral in $K_{p^{\alpha}N}[[z]]$, and the map $m \in \mathbb{N}^{\ast} \mapsto (\text{coefficient of }z^{m}) \in K_{p^{\alpha}N}$ which it defines. The regularization of certain iterated integrals on $\mathbb{P}^{1} - \{0,\mu_{N},\infty\}$ is defined in \cite{I-1}, \S4 by removing at each step of the process of iterated integration a certain linear combination of the differential forms $\frac{d(z^{p^{\alpha}})}{z^{p^{\alpha}} - \xi_{N}^{jp^{\alpha}}}$, $j=1,\ldots,p^{\alpha}$.

\begin{Proposition-Definition}
\noindent 
The regularization of \cite{I-1}, \S4 can be applied to the iterated integrals of $\omega_{0}= \frac{dz}{z}, \text{ }\omega_{\xi_{N}^{j},f_{i}}= - \xi_{N}^{-j}\sum_{m\geq 0} f_{i}(m \mod p^{\alpha})\xi_{N}^{-jm} z^{m}$.
\newline In other terms, for any word 
$\omega_{0}^{n_{d}-1}\omega_{\xi_{N}^{j_{d}},f_{i_{d}}} \ldots \omega_{0}^{n_{1}-1}\omega_{\xi_{N}^{j_{d}},f_{i_{1}}}$, one can define a regularized iterated integral, which is a $K_{p^{\alpha}N}$-linear combination of iterated integrals and is in the space $\LAE^{\reg}_{\mathcal{S}_{\alpha},\xi}(\mathbb{Z}_{p}^{(N)} - \{0\},L)$.
\end{Proposition-Definition}

\begin{proof} The proof is by induction on the weight using the formulas of \cite{I-1}, \S4. One simply sees that at each step of the induction, because $f_{i}$ depends only on $m \mod p^{\alpha}$ and because it has values of norm $\leq 1$, the process can be applied and gives similar results.
\end{proof}

\subsection{The differential equation of the Frobenius of $\pi_{1}^{\un,\DR}(\mathbb{P}^{1} - \{0,\mu_{N},\infty\})$ extended to $\pi_{1}^{\un,\DR}(\mathbb{P}^{1} - \{0,\mu_{p^{a}N},\infty\})$}

We imitate the differential equation of the Frobenius with the regularization defined above.

\begin{Problem} \label{problem} Do there exist, $(L,\Theta_{\xi_{N}^{1}},\ldots,\Theta_{\xi_{N}^{N}})$, elements of $\mathcal{A}^{\dagger}(U_{N}) \otimes_{K_{N}} K_{p^{\alpha}N} \langle \langle e_{0},(\tilde{e}_{\xi_{N}^{j},f_{i}})_{\substack{i=1,\ldots,p^{\alpha} \\ j=1,\ldots,N}}\rangle\rangle \times K_{p^{\alpha}N} \langle \langle e_{0},(\tilde{e}_{\xi_{N}^{j},f_{i}})_{\substack{i=1,\ldots,p^{\alpha} \\ j=1,\ldots,N}}\rangle\rangle^{N}$ which satisfies
\newline i) the following extension of the differential equation of the Frobenius 
\begin{equation} \label{eq: diff eq Frobenius p alpha N} dL = \bigg( p^{\alpha}\omega_{0}(z)e_{0} + \sum_{\substack{j=1,\ldots,N \\ i=1,\ldots,p^{\alpha}}} \omega_{\xi_{N}^{j},f_{i}}(z) \tilde{e}_{\xi_{N}^{j},f_{i}} \bigg)L - L \bigg( p^{\alpha}\omega_{0}(z)e_{0} + \sum_{j=1}^{N} \omega_{\xi^{p^{\alpha}}}(z^{p^{\alpha}})\Theta_{\xi_{N}^{j}} \bigg)
\end{equation}
\noindent ii) $L(0)=1$
\newline iii) For any word $w$ over $e_{0 \cup \mu_{p^{\alpha}N}}^{B}$, $L[w]$ is in the space $\LAE^{\reg}_{\mathcal{S}_{\alpha},\xi}(\mathbb{Z}_{p}^{(N)} - \{0\},K_{p^{\alpha}N})$ defined in \S3.2.
\end{Problem}

\begin{Remark} For any solution of the problem above, 
the image of $L$ by the canonical projection $A^{\dagger}(U_{N})\langle \langle e_{0 \cup \mu_{p^{\alpha}N}}^{B}\rangle\rangle \rightarrow A^{\dagger}(U_{N})\langle \langle e_{0 \cup \mu_{N}}\rangle\rangle$ is $\Li_{p,\alpha}^{\dagger}(\mu_{N})$ reviewed in \S2.2 : this is because, by \cite{Deligne}, \S11, the Frobenius of $\pi_{1}^{\un,\DR}(X_{K_{N},N})$ is uniquely characterized as the unique isomorphism $\pi_{1}^{\un,\DR}(X_{K_{N},N}) \simlra \pi_{1}^{\un,\DR}(X_{K_{N},N}^{(p^{\alpha})})$ which is horizontal with respect to $(\nabla_{\KZ}^{\mu_{N}},\nabla_{\KZ}^{\mu_{N}^{(p^{\alpha})}})$.
\end{Remark}

\begin{Proposition} The Problem \ref{problem} has a unique solution.
\end{Proposition}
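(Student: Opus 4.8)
The plan is to solve the differential equation \eqref{eq: diff eq Frobenius p alpha N} coefficient by coefficient, by induction on the weight of words, exactly as in the direct computation of the Frobenius carried out in \cite{I-1}, \S4. The only structural difference with \cite{I-1} is that the alphabet $e_{0 \cup \mu_{N}}$ is replaced by the larger alphabet $e_{0 \cup \mu_{p^{\alpha}N}}^{B}$ and that the residue forms $\omega_{\xi_{N}^{j}}$ are replaced by the forms $\omega_{\xi_{N}^{j},f_{i}}$; the regularization of \S4.1 shows precisely that this replacement is harmless, because each $f_{i}$ depends only on $m \bmod p^{\alpha}$ and takes values of norm $\leq 1$. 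Writing the equation in the form $dL = AL - LB$ with $A = p^{\alpha}\omega_{0}(z)e_{0} + \sum_{j,i} \omega_{\xi_{N}^{j},f_{i}}(z)\tilde{e}_{\xi_{N}^{j},f_{i}}$ and $B = p^{\alpha}\omega_{0}(z)e_{0} + \sum_{j} \omega_{\xi^{p^{\alpha}}}(z^{p^{\alpha}})\Theta_{\xi_{N}^{j}}$, extracting the coefficient of a word $w$ turns \eqref{eq: diff eq Frobenius p alpha N} into a first-order scalar equation whose right-hand side involves only the coefficients $L[w']$ with $\weight(w')<\weight(w)$ (through the $A$-term, which strips one letter) together with the coefficients of the unknowns $\Theta_{\xi_{N}^{j}}$ (through the $B$-term).

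For uniqueness, I would argue by induction on the weight. The image of $L$ under the projection onto $\mathcal{A}^{\dagger}(U_{N})\langle\langle e_{0 \cup \mu_{N}}\rangle\rangle$ is forced to equal $\Li_{p,\alpha}^{\dagger}(\mu_{N})$ by the remark following Problem \ref{problem}, hence by the characterization of the Frobenius of $\pi_{1}^{\un,\DR}(X_{K_{N},N})$ as the unique horizontal isomorphism in the sense of \cite{Deligne}, \S11; this pins down the part of $(L,\Theta)$ lying over $\mu_{N}$. For the genuinely new coefficients, the homogeneous version of the coefficient equation together with the initial condition $L(0)=1$ and the regularity condition iii) determines each $L[w]$ as the \emph{unique} regularized primitive in $\LAE^{\reg}_{\mathcal{S}_{\alpha},\xi}(\mathbb{Z}_{p}^{(N)}-\{0\},K_{p^{\alpha}N})$ of the already-known lower-weight data; two solutions therefore agree at every weight, and the corresponding values of $\Theta_{\xi_{N}^{j}}$ coincide.

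For existence, I would construct $(L,\Theta)$ by the same induction. At a given weight, the $A$-term of the right-hand side is a $K_{p^{\alpha}N}$-linear combination of products of the forms $\omega_{0}, \omega_{\xi_{N}^{j},f_{i}}$ with lower-weight coefficients; integrating it by the regularization of \S4.1 produces a well-defined element of $\LAE^{\reg}_{\mathcal{S}_{\alpha},\xi}$, so condition iii) holds automatically. The weight-$n$ part of each $\Theta_{\xi_{N}^{j}}$ is then fixed by demanding that $L$ be overconvergent on $U_{N}$, i.e.\ that the contribution of the poles of $\omega_{\xi^{p^{\alpha}}}(z^{p^{\alpha}})$ at the $N$-th roots of unity cancel: this is a finite linear condition. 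Grouplikeness of $L$, equivalently the shuffle relations for its coefficients, propagates throughout because the equation has the form $dL = AL - LB$; one checks inductively that the constructed $\Theta_{\xi_{N}^{j}}$ are primitive, being conjugates of residues as in the $\mu_{N}$ case (cf.\ \eqref{eq:horizontality equation}), so that $A$ and $B$ take values in the completed free Lie algebra and no separate verification is needed.

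The main obstacle is precisely the step determining $\Theta$: one must show that at each weight the overconvergence of $L$ on $U_{N}$ can be achieved, and achieved uniquely, by an appropriate choice of the weight-$n$ coefficients of the $\Theta_{\xi_{N}^{j}}$. Concretely, this amounts to proving that the linear map sending a candidate weight-$n$ part of $\Theta$ to the non-overconvergent (polar) part of the resulting primitive $L$ is a bijection onto the space of possible obstructions. This is exactly the content of the analysis carried out in \cite{I-1}, \S4 for $\mu_{N}$, and the role of the regularization of \S4.1 is to guarantee that this argument transfers verbatim to the enlarged alphabet; once the bijectivity is established, both existence and uniqueness of $\Theta$, and hence of the pair $(L,\Theta)$, follow.
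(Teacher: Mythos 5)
Your proposal is correct and follows essentially the same route as the paper: induction on the weight, extraction of the coefficient of each word $w$, and the observation that condition iii) forces the weight-$n$ coefficients $\Theta_{\xi_{N}^{j}}[w]$ to coincide with the correction terms $-\sum_{j}\omega_{\xi_{N}^{jp^{\alpha}}}(z^{p^{\alpha}})C_{\xi}$ that the regularization of \S4.1 adds, which determines them (and hence $L[w]$) uniquely. The only difference is cosmetic: your discussion of grouplikeness of $L$ and primitivity of the $\Theta_{\xi_{N}^{j}}$ is not needed, since Problem 4.2 imposes no such condition on the solution.
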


\begin{proof} Let $\int$ be the canonical integration operator from $0$ to a formal variable $z$, which send sequences of differential forms to an element in $K_{p^{\alpha}N}[[z]]$. By induction on the weight, we have to show that, for any weight $n$, the problem has a unique solution up to weight $n-1$ and, for any word $w$ of weight $n$, there is a unique way to define $\Theta_{\xi_{N}^{1}},\ldots,\Theta_{\xi_{N}^{N}}$ in weight $n$ such that the following iterated integral, necessarily equal to $L[w]$, is regular :
\begin{multline} \label{eq:regularity equa} \int \omega_{0} L[\partial_{e_{0}}(w)] + \sum_{\substack{j=1,\ldots,N \\ i=1,\ldots,p^{\alpha}}} \omega_{\xi_{N}^{j},f_{i}}(z) \int L[\partial_{\tilde{e}_{\xi_{N}^{j},f_{i}}  }(w)] 
\\ -  \int p^{\alpha} \omega_{0} L[\tilde{\partial}_{e_{0}}(w)] 
 - \sum_{j=1}^{N} \omega_{\xi_{N}^{jp^{\alpha}}}(z^{p^{\alpha}}) \Theta_{\xi_{N}^{j}}[w] 
- \sum_{\substack{w'w''=w \\ \text{weight}(w')\geq 1}} L[w']\Theta_{\xi_{N}^{j}}[w''] 
\end{multline}
\noindent The singularity at $0$ vanishes by the condition $L(0)=1$. The regularity of (\ref{eq:regularity equa}) amounts to say that the right-hand side of (\ref{eq:regularity equa}) is equal to its image by the regularization of \S4.1. 
\newline In (\ref{eq:regularity equa}), only the term $- \sum \omega_{\xi^{p^{\alpha}}}(z^{p^{\alpha}}) \Theta_{\xi_{N}^{j}}[w]$ has coefficients of $\Theta_{\xi_{N}^{j}}$'s of weight $n$ : all the other terms involve coefficients of $\Theta_{\xi_{N}^{j}}$'s of strictly lower weights only. Since the regularization of \S4.1 is obtained by adding a linear combination $-\sum \omega_{\xi^{p^{\alpha}}}(z^{p^{\alpha}}) C_{\xi}$ characterized in terms of the polar coefficients of the iterated integral, we see that the regularity of  (\ref{eq:regularity equa}) amounts to a unique choice of $(\psi_{\xi_{N}^{1}}[w] ,\ldots,\psi_{\xi_{N}^{N}}[w])$, and thus a unique choice of $L[w]$.
\end{proof}

\subsection{Definitions\label{reformulation}}

\begin{Definition} Let $(\Li_{p,\alpha}^{\dagger,\mu_{p^{\alpha}N}},\psi^{\mu_{p^{\alpha}N}}_{\xi_{N}^{1}},\ldots,\psi^{\mu_{p^{\alpha}N}}_{\xi_{N}^{N}})$ be the unique solution to Problem \ref{problem}.
\end{Definition}

\begin{Definition} Let adjoint $p$-adic multiple zeta values at $p^{\alpha}N$-th roots of unity be the coefficients of $\psi_{1}$, namely :
$$ \zeta_{p,\alpha}^{\Ad,\mu_{p^{\alpha}N}} (w) = \psi_{1}[w] $$
\end{Definition}
 
\begin{Definition} The Frobenius of $\pi_{1}^{\un,\DR}(\mathbb{P}^{1} - \{0,\mu_{N},\infty\})$ extended to $\pi_{1}^{\un,\DR}(\mathbb{P}^{1} - \{0,\mu_{p^{\alpha}N},\infty\})$ is the map  $K_{p^{\alpha}N}[[z]] \langle\langle e_{0 \cup \mu_{N}}\rangle\rangle \rightarrow K_{p^{\alpha}N}[[z]] \langle\langle e_{0 \cup \mu_{p^{\alpha}N}}^{B}\rangle\rangle$ which sends 
$$ f \mapsto \Li_{p,\alpha}^{\dagger}(\mu_{p^{\alpha}N})f(e_{0},\psi_{\xi_{N}^{1}},\ldots,\psi_{\xi_{N}^{N}}) $$
\end{Definition}

\begin{Definition} The Frobenius of $\pi_{1}^{\un,\DR}(\mathbb{P}^{1} - \{0,\mu_{p^{\alpha}N},\infty\},-\vec{1}_{0})$ is the map $f \mapsto f(e_{0},\psi_{\xi_{N}^{1},p^{\alpha}},\ldots, \psi_{\xi_{N}^{N},p^{\alpha}})$, $K_{p^{\alpha}N} \langle\langle e_{0 \cup \mu_{N}}\rangle\rangle \rightarrow K_{p^{\alpha}N} \langle\langle e_{0 \cup \mu_{p^{\alpha}N}}^{B}\rangle\rangle$
\end{Definition}

\noindent We reformulate the definition in a more meaningful way. Let us choose an infinite sequence $\rho_{D} = (f_{i_{\delta}})_{\delta \in \mathbb{N}}$ of elements of $B=(f_{1},\ldots,f_{p^{\alpha}})$.
\newline We associate to such a sequence a linear injection
$$ i_{\rho_{D}} : \begin{array}{c} \mathcal{O}^{\sh,e_{0 \cup \mu_{N}}} \rightarrow \mathcal{O}^{\sh,e_{0 \cup \mu_{p^{\alpha}N}}}
\\ e_{0}^{n_{d}-1}e_{\xi_{N}^{j_{d}}} \ldots e_{0}^{n_{1}-1}e_{\xi_{N}^{j_{1}}} e_{0}^{n_{0}-1} \mapsto e_{0}^{n_{d}-1}e_{\xi_{N}^{j_{d}},f_{i_{d}}} \ldots e_{0}^{n_{1}-1}e_{\xi_{N}^{j_{d}},f_{i_{1}}} \rho_{\delta_{1}} e_{0}^{n_{0}-1} 
\end{array} $$
\noindent We have $\cup_{\rho_{D}}i_{\rho_{D}}(\mathcal{O}^{\sh,e_{0 \cup \mu_{N}}}) = \mathcal{O}^{\sh,e_{0 \cup \mu_{p^{\alpha}N}}^{B}}$. However, $i_{\rho_{D}}(\mathcal{O}^{\sh,e_{0 \cup \mu_{N}}})$ is not stabilized by the shuffle product. The definition of the Frobenius of $X_{K_{p^{\alpha}N},p^{\alpha}N}$ can be interpreted as follows : in the differential equation of the Frobenius of $X_{K_{N},N}$, let us replace the factor $\Li_{p}^{\KZ}$, viewed as a function on $\mathcal{O}^{\sh,e_{0 \cup \mu_{N}}}$, by $\Li \circ i_{\rho_{D}}$ : there is a unique way to transform  $(\Li_{p,\alpha}^{\dagger},\Ad_{\Phi_{p,\alpha}}(e_{1}))$ into variants in order to obtain an equation having regular solutions. This regular equation is the restriction to $i_{\rho_{D}} (\mathcal{O}^{\sh,e_{0 \cup \mu_{N}}})$ differential equation (\ref{eq: diff eq Frobenius p alpha N}). In a summary :
\newline i) for each sequence $\rho_{D}$, we have a "transformed copy" of the Frobenius of $\pi_{1}^{\un,\DR}(X_{K_{N},N})$ inside $\pi_{1}^{\un,\DR}(X_{K_{p^{\alpha}N},p^{\alpha}N})$, which acts on iterated integrals of $X_{K_{N},N}$ transformed by $i_{\rho_{D}}$.
\newline ii) the Frobenius extend to $\pi_{1}^{\un,\DR}(X_{K_{p^{\alpha}N},p^{\alpha}N})$ in the previous sense is just an infinite sequence of "transformed copies" of the Frobenius of $\pi_{1}^{\un,\DR}(X_{K_{p^{\alpha}N},p^{\alpha}N})$, with transformed coeffficients.
\newline 
\newline We can now sketch the definition of (non-adjoint) $p$-adic multiple zeta values at $p^{\alpha}N$-th roots of unity :
\newline - by formalizing the machinery of "transformed copies" described above and applying it to multiple harmonic sums and the explicit formulas for $\zeta_{p,\alpha}$ found in \cite{I-1}, we get their definition
\newline - Alternatively, consider the expression of the coefficients of $\Phi_{p,\alpha}$ in terms of those of $\Phi^{-1}_{p,\alpha}e_{1}\Phi_{p,\alpha}$ obtained in \cite{I-1}. There is a natural way to extend it to power series over the alphabet $e_{0 \cup \mu_{p^{\alpha}N}}^{B}$. This leads to a definition $\zeta_{p,\alpha}$ ; however, one has to check that it does not depends on choices.
\newline - In principle, the equality between all the possible definitions follows from the uniqueness of the solution to Problem \ref{problem}.
\newline 
\newline The $p$-adic multiple zeta values at $p^{\alpha}N$-th roots of unity should be closely related to the "twisted $p$-adic multiple $L$-values" defined in Theorem 3.38 of \cite{FKMT} ; note that in \cite{FKMT}, $p$-adic multiple $L$-value means $p$-adic multiple zeta values at root of unity of order prime to $p$, following a terminiology of Yamashita \cite{Yamashita}, and not values of the $p$-adic multiple $L$-functions introduced in \cite{FKMT}.

\subsection{Combinatorics and bounds of valuation for $\Li_{p,\alpha}^{\dagger_{p,\alpha},\mu_{N}}$}

\noindent We sketch a generalization to $\mathbb{P}^{1} - \{0,\mu_{p^{\alpha}N},\infty\}$ of the solution of the differential equation of the Frobenius explained in \cite{I-1}. The differential equation (\ref{eq: diff eq Frobenius p alpha N}) amounts to
\begin{equation} \label{eq: diff eq Frobenius p alpha N functional} \Li_{p,\alpha}^{\dagger,\mu_{p^{\alpha}N},B} = \Li_{p}^{\KZ(\mu_{p^{\alpha}N})}(e_{0},e_{\xi_{N}^{j},f_{i}})(z) \times \Li_{p}^{\KZ,\mu_{N}}(e_{0},\psi_{\xi_{N}^{1}},\ldots,\psi_{\xi^{N}_{N}})(z^{p^{\alpha}})^{-1} 
\end{equation}
\noindent Let us view $L$ as a linear map $\mathcal{O}^{\sh,e_{0 \cup \mu_{p^{\alpha}N}}} \rightarrow K_{p^{\alpha}N}$. 
\newline 
\newline - In \cite{I-1}, we defined a "regularization by the Frobenius" $\reg_{Frob,\alpha}$ and wrote a formula for it : 
\newline This can be extended immediately into a map  $\reg_{\psi_{\xi_{N}^{1}},\ldots,\psi_{\xi^{N}_{N}}}$ ; the formula for it involves a convolution product of functions over two different shuffle Hopf algebras, and a certain coproduct related to the Goncharov motivic coproduct.
\newline - Then, the regularity condition of $\Li_{p,\alpha}^{\dagger,\mu_{p^{\alpha}N},B}$ amounts to the equation $\int \reg_{\psi_{\xi_{N}^{1}},\ldots,\psi_{\xi^{N}_{N}}} = \int \reg_{X_{K}} \reg_{\psi_{\xi_{N}^{1}},\ldots,\psi_{\xi^{N}_{N}}}$.
\newline - We have explicit formulas for regularized iterated integrals which are very similar to those of \cite{I-1}.
\newline - The bounds of valuations of the coefficients of $\Li_{p,\alpha}^{\dagger,\mu_{N}}$ and $\zeta_{p,\alpha}^{\mu_{N}}$ of the Appendix to Theorem I-1 remain true, with the notions of depth and weight of words over $e_{0 \cup \mu_{p^{\alpha}N}}^{B}$ defined in \S3.1.

\section{The $\int_{0}^{z<<1}$-harmonic Ihara action extended to $\pi_{1}^{\un,\DR}(\mathbb{P}^{1} - \{0,\mu_{p^{\alpha}N},\infty\})$}

Again we suppose given a basis of $H^{1,\DR}(\mathbb{P}^{1} -\{0,\mu_{p^{\alpha}N},\infty\})$ of the type of \S3.1, and we work in the framework of \S3. Using \S4, we extend the $\int_{0}^{z<<1}$-harmonic Ihara action built in \cite{I-2} into a structure on  $\pi_{1}^{\un,\DR}(\mathbb{P}^{1} - \{0,\mu_{p^{\alpha}N},\infty\})$. 

\subsection{The main technical lemma}

We extend to $\mathbb{P}^{1} - \{0,\mu_{p^{\alpha}N},\infty\}$ the main preliminary lemma of this theory :

\begin{Lemma} \label{key lemma}
For any word $w$ over $e_{0 \cup \mu_{p^{\alpha}N}}^{B}$, we have $\Li_{p,\alpha}^{\dagger,\mu_{p^{\alpha}N},B}[e_{0}^{l}w] \rightarrow_{l \rightarrow \infty} 0$.
\end{Lemma}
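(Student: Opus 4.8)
The plan is to transport the proof of the corresponding lemma of \cite{I-1} to $\mathbb{P}^{1}-\{0,\mu_{p^{\alpha}N},\infty\}$, using the regularization of \S4.1, the presentation of \S3 and the valuation bounds of \S3.2. Write $L=\Li_{p,\alpha}^{\dagger,\mu_{p^{\alpha}N},B}$ and read $e_{0}^{l}w$ as the word obtained by prepending $l$ letters $e_{0}$ to $w$; the assertion is convergence to $0$ in $\mathcal{A}^{\dagger}(U_{N})\otimes_{K_{N}}K_{p^{\alpha}N}$, equivalently uniformly in the power-series coefficients of these overconvergent functions. First I would set up the recursion coming from the differential equation (\ref{eq: diff eq Frobenius p alpha N}): since the leading letter of $e_{0}^{l}w$ is $e_{0}$, it is produced only by the term $p^{\alpha}\omega_{0}(z)e_{0}$ of the left factor, so by the regularity condition iii) of Problem \ref{problem} and formula (\ref{eq:regularity equa}) one has
\[ L[e_{0}^{l}w]=p^{\alpha}\,\reg\!\int_{0}^{z}\frac{dt}{t}\,L[e_{0}^{l-1}w]\;-\;(\text{right-action corrections}), \]
where each correction is of the form $L[e_{0}^{l}w']\,\Theta_{\xi_{N}^{j}}[w'']$ with $\weight(w')<\weight(w)$. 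This reduces the lemma, by induction on $\weight(w)$, to controlling the iterates of the single operator $T=p^{\alpha}\,\reg\!\int_{0}^{z}\frac{dt}{t}$.

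Second, I would pass to power-series coefficients. Identifying an iterated integral with the sequence $m\mapsto(\text{coefficient of }z^{m})$, the operator $\int_{0}^{z}\frac{dt}{t}$ divides the $m$-th coefficient by $m$, so on the non-resonant indices $T$ acts as multiplication by $p^{\alpha}/m$, the regularization of \S4.1 subtracting exactly the resonant linear combination of the forms $\frac{d(z^{p^{\alpha}})}{z^{p^{\alpha}}-\xi_{N}^{jp^{\alpha}}}$ so that the output again lies in $\LAE^{\reg}_{\mathcal{S}_{\alpha},\xi}(\mathbb{Z}_{p}^{(N)}-\{0\},K_{p^{\alpha}N})$ of Definition \ref{defdel}. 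The growth window defining $\mathcal{S}_{\alpha}$, carrying the exponent $(\alpha-1)l$, is calibrated precisely so that one application of $T$ preserves membership in $\mathcal{S}_{\alpha}$ while contracting the coefficients.

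Third, I would quantify the contraction using the bounds of valuation. The coefficient of $z^{m}$ in $L[u]$ is, up to a bounded factor in $m^{\mathbb{Z}}$, a weighted multiple harmonic sum; the Lemma of \S3.2 gives $v_{p}(\har_{p^{\alpha}}(u))\geq\weight(u)$, and by the extension to $e_{0\cup\mu_{p^{\alpha}N}}^{B}$ of the valuation bounds of \S4.4 these estimates persist. Since $\weight(e_{0}^{l}w)=\weight(w)+l$, each prepended $e_{0}$ supplies an extra factor $p^{\alpha}$ against a division by the running index, so over the non-resonant indices $v_{p}(m)\le\alpha$ the norm of the coefficients is multiplied by a fixed factor strictly less than $1$; iterating, the non-resonant part of $L[e_{0}^{l}w]$ tends to $0$.

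The main obstacle is the resonance phenomenon: $p^{\alpha}/m$ has $p$-adic norm $p^{\,v_{p}(m)-\alpha}$, so for indices $m$ with $v_{p}(m)\ge\alpha$ --- in particular the critical case $v_{p}(m)=\alpha$ --- the naive factor does not decay, and a term-by-term estimate fails. The crux is therefore to show that, after the regularization of \S4.1, the contribution of these resonant indices is either cancelled or reabsorbed into $\mathcal{S}_{\alpha}$ without spoiling the net contraction provided by the factor $p^{\alpha}$; this is exactly where the calibration of $\mathcal{S}_{\alpha}$ in Definition \ref{defdel} and the valuation Lemma of \S3.2 are used in tandem, as in the corresponding argument of \cite{I-1}.
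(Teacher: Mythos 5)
Your proposal has a genuine gap, and it also overlooks the short argument the paper actually uses. The paper's proof (modelled on Lemma 3.2.1 of \cite{I-2}) is a direct deduction from the valuation bounds of \S4.4: those bounds, which extend the Appendix to Theorem I-1 of \cite{I-1} to the alphabet $e_{0 \cup \mu_{p^{\alpha}N}}^{B}$ with the weight and depth of \S3.1, give a lower bound on the valuation of the coefficients of $\Li_{p,\alpha}^{\dagger,\mu_{p^{\alpha}N},B}[u]$ which tends to infinity when $\weight(u)\rightarrow\infty$ and $\depth(u)$ stays bounded. Since $\weight(e_{0}^{l}w)=l+\weight(w)\rightarrow\infty$ while $\depth(e_{0}^{l}w)=\depth(w)$ is constant, the lemma follows at once (and, as the paper remarks, for any sequence $(w_{l})$ with $\weight(w_{l})\rightarrow\infty$ and $\limsup_{l}\depth(w_{l})<\infty$). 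You invoke exactly these \S4.4 bounds in your third step, but only as an auxiliary estimate inside a contraction scheme; applied directly to the words $e_{0}^{l}w$, they already finish the proof and nothing else is needed.

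The contraction scheme you substitute for this deduction does not close. First, the recursion you extract from (\ref{eq: diff eq Frobenius p alpha N}) is misstated: among the factorizations $uv=e_{0}^{l}w$ entering the right-hand action, the one with $u=\emptyset$ contributes $-\sum_{j}\omega_{\xi_{N}^{jp^{\alpha}}}(z^{p^{\alpha}})\,\Theta_{\xi_{N}^{j}}[e_{0}^{l}w]$ (visible in (\ref{eq:regularity equa})), and those with $u=e_{0}^{k}$, $1\leq k\leq l$, contribute $L[e_{0}^{k}]\,\Theta_{\xi_{N}^{j}}[e_{0}^{l-k}w]$. None of these has your claimed form $L[e_{0}^{l}w']\Theta_{\xi_{N}^{j}}[w'']$ with $\weight(w')<\weight(w)$, so they are not covered by your induction on $\weight(w)$: even granting that $L[e_{0}^{k}]=0$ for $k\geq 1$ (which you do not address), the term $\Theta_{\xi_{N}^{j}}[e_{0}^{l}w]$ remains, involves coefficients of the $\Theta_{\xi_{N}^{j}}$'s at words of weight growing with $l$, and its decay is a statement of the same kind as the lemma itself, requiring valuation bounds for the adjoint $p$MZV$\mu_{p^{\alpha}N}$'s --- that is, the \S4.4 bounds again. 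Secondly, and decisively, you name the resonance problem (indices with $v_{p}(m)\geq\alpha$, where $|p^{\alpha}/m|_{p}$ does not contract; note that at $v_{p}(m)=\alpha$ this factor equals $1$, contradicting your claim of strict contraction on the range $v_{p}(m)\leq\alpha$) as ``the crux'' and then do not resolve it: asserting that the calibration of $\mathcal{S}_{\alpha}$ and the Lemma of \S3.2 handle it ``as in the corresponding argument of \cite{I-1}'' is a pointer to precisely the hard content of the bounds of \cite{I-1}, not an argument. In short, the only complete route available with the material of this paper is the one the paper takes: cite the \S4.4 valuation bounds and observe that the weight of $e_{0}^{l}w$ grows while its depth stays fixed.
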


\begin{proof} Similar to the proof of Lemma 3.2.1 in \cite{I-2}, as a direct consequence of the generalization, explained in \S4.4, of the bounds of valuation of $\Li_{p,\alpha}^{\dagger}$ in \cite{I-1}. More generally, the result is true if we replace the sequence $(e_{0}^{l}w)_{l \in \mathbb{N}}$ by any sequence of words $(w_{l})_{l \in \mathbb{N}}$ such that $\displaystyle \weight(w_{l}) \rightarrow_{l \rightarrow \infty} \infty$ and 
$\displaystyle \limsup_{l \rightarrow \infty}\depth(w_{l}) < \infty$.
\end{proof}

\subsection{Notations for generating series of multiple harmonic sums in the framework $\int_{0}^{z<<1}$}

We take notations for the non-commutative generating series of multiple harmonic sums in the framework $\int_{0}^{z<<1}$.

\begin{Definition} Let $K_{p^{\alpha}N} \langle\langle e_{0 \cup \mu_{p^{\alpha}N}}^{B} \rangle\rangle_{\har}^{\smallint_{0}^{z<<1}} \subset K_{p^{\alpha}N} \langle\langle e_{0 \cup \mu_{p^{\alpha}N}}^{B} \rangle\rangle$ be the vector subspace consisting of the elements $f\in K_{p^{\alpha}N} \langle\langle e_{0 \cup \mu_{p^{\alpha}N}}^{B} \rangle\rangle$ such that, for all words $w$ on $e_{0 \cup \mu_{p^{\alpha}N}}^{B}$, the sequence $(f[e_{0}^{l}w])_{l\in \mathbb{N}}$ is constant.
\end{Definition}

\begin{Definition} Let $(K_{p^{\alpha}N} \langle\langle e_{0 \cup \mu_{p^{\alpha}N}}^{B} \rangle\rangle_{\har}^{\smallint_{0}^{z<<1}})_{S}$ be the subset of elements defined by the condition $\sup_{\substack{w\text{ of weight n} \\ \text{ of depth d}}}  |f[w]|_{K} \rightarrow_{n \rightarrow \infty} 0$.
\end{Definition}

\begin{Definition} i) For all $n \in \mathbb{N}^{\ast}$, let $\har_{n}^{\mu_{p^{\alpha}N},B} = (\har_{n}(w))_{w \in \mathcal{W}_{\har}^{\Sigma}} \in K_{p^{\alpha}N} \langle\langle e_{0 \cup \mu_{p^{\alpha}N}}^{B}\rangle\rangle_{\har}^{\Sigma}$
\newline ii) For all $I \subset \mathbb{N}$, let $\har_{I}^{\mu_{p^{\alpha}N},B} = (\har_{n}^{\mu_{p^{\alpha}N},B})_{n \in I} \in \Map(I, K_{p^{\alpha}N} \langle\langle e_{0 \cup \mu_{p^{\alpha}N}}^{B} \rangle\rangle_{\har}^{\Sigma})$.
\end{Definition}

\subsection{The $\int_{0}^{z<<1}$-harmonic Ihara action and of the $\int_{0}^{z<<1}$-harmonic Frobenius generalized to $\mathbb{P}^{1} - \{0,\mu_{p^{\alpha}N},\infty\}$}

\begin{Definition}
\noindent \newline i) Let $K_{p^{\alpha}N} \langle\langle e_{0 \cup \mu_{p^{\alpha}N}}^{B} \rangle\rangle^{\lim} \subset K_{p^{\alpha}N}\langle\langle e_{0 \cup \mu_{p^{\alpha}N}}^{B} \rangle\rangle$ be the vector subspace consisting of the elements $f\in K_{p^{\alpha}N}\langle\langle e_{0 \cup \mu_{p^{\alpha}N}}^{B} \rangle\rangle$ such that, for all words $w$ on $e_{0 \cup \mu_{p^{\alpha}N}}^{B}$, the sequence $(f[e_{0}^{l}w])_{l\in \mathbb{N}}$ has a limit in $K_{p^{\alpha}N}$ when $l \rightarrow \infty$.
\newline ii) \label{def limit map}We have a map that we call "limit" :
\begin{center} $\lim : K_{p^{\alpha}N} \langle \langle e_{0 \cup \mu_{p^{\alpha}N}}^{B} \rangle\rangle^{\lim} \rightarrow K_{p^{\alpha}N} \langle \langle e_{0 \cup \mu_{p^{\alpha}N}}^{B} \rangle\rangle_{\har}^{\smallint_{0}^{z<<1}} $ \end{center}
\noindent defined by, for all words $w$,
\begin{center} $\displaystyle (\lim f) = \sum_{w\text{ word}} \bigg( \lim_{l \rightarrow \infty} f[e_{0}^{l}w] \bigg) w$
\end{center}
\end{Definition}

\begin{Lemma} \label{lemma continuity limit} The map $\lim$ of Definition \ref{def limit map} is continuous for restriction of the $\mathcal{N}_{D}$-topology on the source and target.
\end{Lemma}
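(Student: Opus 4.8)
The plan is to reduce the statement to a single norm estimate, exploiting that $\lim$ is $K_{p^{\alpha}N}$-linear and that the $\mathcal{N}_{D}$-topology is precisely the topology attached to the family of depth-graded seminorms $\nu_{d}(f) = \max_{w\,:\,\depth(w)=d} |f[w]|_{p}$, $d \in \mathbb{N}$, these being exactly the coefficients of $\mathcal{N}_{D}(f) \in \mathbb{R}_{+}[[D]]$. First I would note that $\lim$ is additive and homogeneous: each coefficient $\lim_{l} f[e_{0}^{l}w]$ commutes with finite sums and with multiplication by scalars of $K_{p^{\alpha}N}$, so $\lim$ is linear on the subspace $K_{p^{\alpha}N}\langle\langle e_{0 \cup \mu_{p^{\alpha}N}}^{B}\rangle\rangle^{\lim}$. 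For a linear map between spaces topologized by such seminorm families, continuity follows once I establish the single inequality $\nu_{d}(\lim f) \leq \nu_{d}(f)$ for every $d$ and every $f$ in the source.

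The key observation is purely combinatorial: for any word $w$ and any $l \in \mathbb{N}$, the word $e_{0}^{l}w$ has the same depth as $w$, since by the conventions of \S3.3.1 the letter $e_{0}$ contributes to the weight but not to the depth. Hence $(\lim f)[w] = \lim_{l \to \infty} f[e_{0}^{l}w]$ is a limit of coefficients of $f$ taken exclusively at words of depth $\depth(w)$; the depth-$d$ part of $\lim f$ is controlled entirely by the depth-$d$ part of $f$.

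With this in hand the estimate is immediate. Using that $f \in \langle\langle\cdots\rangle\rangle^{\lim}$ guarantees the inner limit exists, and that the $p$-adic absolute value is continuous, I obtain
$$ |(\lim f)[w]|_{p} = \lim_{l \to \infty} |f[e_{0}^{l}w]|_{p} \leq \sup_{l \in \mathbb{N}} |f[e_{0}^{l}w]|_{p} \leq \nu_{\depth(w)}(f), $$
the last inequality because each $e_{0}^{l}w$ has depth $\depth(w)$. Taking the maximum over words $w$ of depth $d$ gives $\nu_{d}(\lim f) \leq \nu_{d}(f)$, i.e. $\mathcal{N}_{D}(\lim f) \leq \mathcal{N}_{D}(f)$ coefficient by coefficient. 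Applying this to $f_{n} - f$, which again lies in the source (a vector subspace) and on which $\lim(f_{n}-f) = \lim f_{n} - \lim f$ by linearity, shows that $\nu_{d}(f_{n}-f) \to 0$ for each $d$ forces $\nu_{d}(\lim f_{n} - \lim f) \to 0$ for each $d$; this is exactly continuity for the $\mathcal{N}_{D}$-topologies on source and target.

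The only point requiring care — the nearest thing to an obstacle — is the interchange $|\lim_{l} a_{l}|_{p} = \lim_{l}|a_{l}|_{p}$, which rests on the continuity of $|\cdot|_{p}$ together with the standing hypothesis $f \in \langle\langle\cdots\rangle\rangle^{\lim}$ ensuring the inner limits exist. Once this is granted the argument is entirely formal, and it in fact yields the stronger fact that $\lim$ is norm-nonincreasing (indeed $1$-Lipschitz) for each depth-graded seminorm $\nu_{d}$, from which continuity is a formality.
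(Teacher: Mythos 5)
Your proof is correct: the paper itself dismisses this lemma with the single word ``Clear.'', and your argument is precisely the natural filling-in of that claim --- the key point being that $e_{0}^{l}w$ has the same depth as $w$, so each depth-graded seminorm satisfies $\nu_{d}(\lim f) \leq \nu_{d}(f)$, and continuity follows by linearity. Nothing in your write-up deviates from or adds risk to what the paper intends; it simply makes the $1$-Lipschitz estimate explicit.
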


\begin{proof} Clear. \end{proof}

\begin{Proposition-Definition} \label{dR-rt harmonic Ihara action} The map :
$$ \circ^{\smallint_{0}^{z<<1}}_{\har} :
K_{p^{\alpha}N} \langle\langle e_{0 \cup \mu_{p^{\alpha}N}}^{B} \rangle\rangle_{S} \times
\Map(\mathbb{N}, K_{p^{\alpha}N} \langle\langle e_{0 \cup \mu_{N}} \rangle\rangle_{\har}^{\smallint_{0}^{z<<1}})
\rightarrow 
\Map(\mathbb{N},K_{p^{\alpha}N} \langle\langle e_{0 \cup \mu_{p^{\alpha}N}}^{B} \rangle\rangle_{\har}^{\smallint_{0}^{z<<1}})
$$
\noindent defined by the equation
\begin{center} $(g_{\xi_{N}^{1}},\ldots,g_{\xi_{N}^{N}}) \circ_{\har}^{\smallint_{0}^{z<<1}} (n \mapsto h_{n}) = \big(n \mapsto \lim \big( \tau(n)(g_{\xi_{N}^{1}},\ldots,g_{\xi_{N}^{N}}) \circ^{\int_{0}^{1}}_{U\Lie} h_{n} \big)\big)$ \end{center}
\noindent is well-defined. We call it the $\int_{0}^{z<<1}$-harmonic Ihara action of $X_{K_{p^{\alpha}N},p^{\alpha}N}$.
\end{Proposition-Definition}

\begin{Proposition} We have 
$$ \har_{p^{\alpha}\mathbb{N}}^{\mu_{p^{a}N}} = \psi_{1}^{p^{\alpha}N} \text{ } \circ_{\har}^{\smallint_{0}^{z<<1}} \text{ }\har_{\mathbb{N}}^{\mu_{N}^{(p^{\alpha})}} $$
\end{Proposition}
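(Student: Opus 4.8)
The plan is to read the identity off the functional form of the differential equation of the Frobenius, equation (\ref{eq: diff eq Frobenius p alpha N functional}), by extracting power series coefficients and passing to the harmonic limit, exactly as in the proof of the analogous statement in \cite{I-2}; the only new ingredients are the basis $B$ and the passage from $\mu_{N}$ to $\mu_{p^{\alpha}N}$. First I would rewrite (\ref{eq: diff eq Frobenius p alpha N functional}) so that the flat section on $X_{K_{p^{\alpha}N},p^{\alpha}N}$ appears as a product,
$$ \Li_{p}^{\KZ(\mu_{p^{\alpha}N})}(e_{0},e_{\xi_{N}^{j},f_{i}})(z) = \Li_{p,\alpha}^{\dagger,\mu_{p^{\alpha}N},B}(z) \cdot \Li_{p}^{\KZ,\mu_{N}}(e_{0},\psi_{\xi_{N}^{1}},\ldots,\psi_{\xi_{N}^{N}})(z^{p^{\alpha}}), $$
the overconvergent factor $\Li^{\dagger}$ multiplying the pulled-back flat section on $X_{K_{N},N}^{(p^{\alpha})}$ with its letters substituted by the $\psi_{\xi_{N}^{j}}$. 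By the Fact of \S2.2 relating $\Li^{\KZ}[w]$ to multiple harmonic sums, the coefficient of $z^{m}$ on the left is, up to the normalization $m^{-\weight(w)}$ and a sign, the weighted multiple harmonic sum $\har_{m}^{\mu_{p^{\alpha}N}}(w)$; restricting to $m \in p^{\alpha}\mathbb{N}$ (i.e. setting $m = p^{\alpha}n$) produces the generating series $\har_{p^{\alpha}\mathbb{N}}^{\mu_{p^{\alpha}N}}$ indexed by $n$, which is the left-hand side of the claim.

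Next I would treat the right-hand product. The factor $\Li_{p}^{\KZ,\mu_{N}}(\ldots)(z^{p^{\alpha}})$ only carries powers $z^{p^{\alpha}n}$, and its coefficient of $z^{p^{\alpha}n}$ equals the coefficient of $y^{n}$ of $\Li_{p}^{\KZ,\mu_{N}}(e_{0},\psi_{\xi_{N}^{1}},\ldots,\psi_{\xi_{N}^{N}})(y)$; by the same Fact this is the harmonic sum $\har_{n}^{\mu_{N}^{(p^{\alpha})}}$ with the $\psi_{\xi_{N}^{j}}$ substituted, which is precisely $\har_{n}^{\mu_{N}^{(p^{\alpha})}}(e_{0},\psi_{\xi_{N}^{1}},\ldots,\psi_{\xi_{N}^{N}}) = (\psi_{\xi_{N}^{1}},\ldots,\psi_{\xi_{N}^{N}}) \circ^{\smallint_{0}^{1}}_{U\Lie} \har_{n}^{\mu_{N}^{(p^{\alpha})}}$ in the notation of \S3.3.2. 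Thus, before the two remaining structures are applied, the right-hand side already reproduces the functional substitution appearing in the harmonic Ihara action of Proposition-Definition \ref{dR-rt harmonic Ihara action}.

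I would then account for the scaling $\tau(n)$ and the map $\lim$. The weight-scaling $\tau(n)$ by $\lambda^{\weight}$ absorbs the normalization factor $m^{\weight(w)}$ with $m = p^{\alpha}n$ (the constant factor $p^{\alpha}$ being carried along by the $\psi$'s), so that both sides become indexed by the single parameter $n$. The map $\lim$ extracts the harmonic part: since $\Li_{p,\alpha}^{\dagger,\mu_{p^{\alpha}N},B}$ is overconvergent, Lemma \ref{key lemma} gives $\Li^{\dagger}[e_{0}^{l}w] \to 0$, so within the product the $\Li^{\dagger}$ factor contributes to $\lim(\tau(n)(\psi_{1}^{p^{\alpha}N}) \circ^{\smallint_{0}^{1}}_{U\Lie} \har_{n}^{\mu_{N}^{(p^{\alpha})}})$ only its surviving constant term, which is exactly $\psi_{1}^{p^{\alpha}N}$. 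Assembling these, the coefficient of $z^{p^{\alpha}n}$ on the left equals $\lim(\tau(n)(\psi_{1}^{p^{\alpha}N}) \circ^{\smallint_{0}^{1}}_{U\Lie} \har_{n}^{\mu_{N}^{(p^{\alpha})}})$, which by the definition of $\circ_{\har}^{\smallint_{0}^{z<<1}}$ is the value at $n$ of $\psi_{1}^{p^{\alpha}N} \circ_{\har}^{\smallint_{0}^{z<<1}} \har_{\mathbb{N}}^{\mu_{N}^{(p^{\alpha})}}$, giving the asserted equality.

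The main obstacle I expect is the precise bookkeeping of the harmonic limit: one must verify that, after applying $\tau(n)$, the cross terms produced by expanding $\Li^{\dagger}(z) \cdot \Li^{\KZ,\mu_{N}}(\psi)(z^{p^{\alpha}})$ at $z^{p^{\alpha}n}$ reorganize, in the limit along $e_{0}^{l}w$, into exactly the functional substitution $\har_{n}^{\mu_{N}^{(p^{\alpha})}}(e_{0},\psi_{\xi_{N}^{1}}^{p^{\alpha}N},\ldots,\psi_{\xi_{N}^{N}}^{p^{\alpha}N})$, with the overconvergent factor contributing only its boundary value $\psi_{1}^{p^{\alpha}N}$ and no lower-order tails surviving. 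This is where Lemma \ref{key lemma} together with the $\mathcal{N}_{D}$-continuity of $\lim$ (Lemma \ref{lemma continuity limit}) do the essential work, and it is the only step where the overconvergence of $\Li^{\dagger}$, rather than mere convergence, is used.
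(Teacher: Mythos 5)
Your proposal is correct and takes essentially the same route as the paper's own proof, which likewise reads the identity off equation (\ref{eq: diff eq Frobenius p alpha N functional}) by taking the coefficient of degree $p^{\alpha}n$ in the expansion at $0$, applying $\tau(n)$, specializing at words $e_{0}^{l}\tilde{e}_{b_{j_{d+1}}}e_{0}^{n_{d}-1}\cdots e_{0}^{n_{1}-1}\tilde{e}_{b_{j_{1}}}$, and computing the limit $l \rightarrow \infty$ via Lemma \ref{key lemma}. One phrase should be repaired: the surviving constant term of $\Li_{p,\alpha}^{\dagger,\mu_{p^{\alpha}N},B}$ is $1$, not $\psi_{1}^{p^{\alpha}N}$ --- the $\psi$'s enter only through the substituted pulled-back factor $\Li_{p}^{\KZ,\mu_{N}}(e_{0},\psi_{\xi_{N}^{1}},\ldots,\psi_{\xi_{N}^{N}})(z^{p^{\alpha}})$, whose harmonic limit is what produces the action of $\psi_{1}^{p^{\alpha}N}$ --- but this does not affect the structure of your argument.
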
 

\begin{proof} In the extended differential equation of the Frobenius rewritten as (\ref{eq: diff eq Frobenius p alpha N functional}), we take the coefficient of degree $p^{\alpha}n$ in the power series expansion at $0$ $(n \in \mathbb{N}^{\ast})$, apply $\tau(n)$ (where $\tau$ is defined in Definition \ref{def of tau}), we specify the equality for a word $e_{0}^{l}\tilde{e}_{b_{j_{d+1}}} e_{0}^{n_{d}-1}\tilde{e}_{b_{j_{d}}} \ldots  e_{0}^{n_{1}-1}\tilde{e}_{b_{j_{1}}}$, and compute the limit $l \rightarrow \infty$ by applying the Lemma \ref{key lemma}.
\end{proof}

\section{The $\Sigma$-harmonic Ihara action extended to $\pi_{1}^{\un,\DR}(\mathbb{P}^{1} - \{0,\mu_{p^{\alpha}N},\infty\})$ \label{Sigma harmonic action}}

\subsection{Basic equations of multiple harmonic sums}

We adapt a few (simple) basic properties of multiple harmonic sums written in \cite{I-2}, \S4 to this more general case. The first property is the "formula of shifting".

\begin{Lemma} \label{shifting} Let $m \in \mathbb{N}$ and $\delta \in \mathbb{N}$ ; we have by definition 
$$\har_{\delta,\delta+m}\bigg( \begin{array}{c} f_{i_{d}},\ldots,f_{i_{1}} \\ \xi_{N}^{j_{d+1}},\ldots,\xi_{N}^{j_{1}} \\ n_{d},\ldots,n_{1} \end{array} \bigg) = \delta^{n_{d}+\ldots+n_{1}}\frac{(\xi_{N}^{j_{1}})^{\delta}}{(\xi_{N}^{j_{d+1}})^{\delta+m}} \sum_{0<m_{1}<\ldots<m_{d}<m} \prod_{d'=1}^{d}  \frac{\bigg(\frac{\xi_{N}^{j_{d'+1}}}{\xi_{N}^{j_{d'}}} \bigg)^{m_{d'}}  f_{i_{d'}}(m_{d'}+\delta)}{
(m_{d'}+\delta)^{-n_{i}}} $$ 
\noindent Assume moreover that $m=p^{\alpha}$, and $\delta= up^{\alpha}$, $u\in \mathbb{N}$ (in particular for all $m' \in [1,m-1]$, we have $v_{p}(m') < v_{p}(\delta)$. Then, we have :

\begin{multline} \har_{\delta,\delta+m}\bigg( \begin{array}{c} f_{i_{d}},\ldots,f_{i_{1}} \\ \xi_{N}^{j_{d+1}},\ldots,\xi_{N}^{j_{1}} \\ n_{d},\ldots,n_{1} \end{array} \bigg) 
= \sum_{l_{1},\ldots,l_{d} \geq 0} \bigg( \prod_{j=1}^{d} \delta^{l_{j}} {-n_{i} \choose s_{i}} \bigg) \text{ }\har_{m} \bigg( \begin{array}{c} f_{i_{d}},\ldots,f_{i_{1}} \\ \xi_{N}^{j_{d+1}},\ldots,\xi_{N}^{j_{1}} \\ n_{d}+l_{d},\ldots,n_{1}+l_{1} \end{array} \bigg) 
\end{multline}
\end{Lemma}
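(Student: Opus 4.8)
The plan is to prove this by a direct computation: the first displayed equality is \emph{definitional} (it is the summand of Definition \ref{def mhs 1} with indices running over the shifted simplex $\delta<m_1<\cdots<m_d<\delta+m$, rewritten through the change of variables $m_{d'}\mapsto m_{d'}+\delta$ so that the new indices run over $0<m_1<\cdots<m_d<m$), so there is nothing to prove there. The genuine content is the second, multiline identity, and my strategy is to start from this shifted form, \emph{freeze} the functions $f_{i_{d'}}$ using their periodicity, expand the shifted denominators $(m_{d'}+\delta)^{-n_{d'}}$ as $p$-adic binomial series, and then regroup the terms. Note first that the fraction in the first display should be read with denominator $(m_{d'}+\delta)^{n_{d'}}$ (the exponent $-n_i$ printed there is a typo).

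The two arithmetic hypotheses enter in two distinct places. Since each $f_{i_{d'}}$ factors through $\mathbb{Z}/p^{\alpha}\mathbb{Z}$ and $\delta=up^{\alpha}\equiv 0 \bmod p^{\alpha}$, I would use $f_{i_{d'}}(m_{d'}+\delta)=f_{i_{d'}}(m_{d'})$ to replace every shifted argument by its unshifted value. Next, for $0<m_{d'}<m=p^{\alpha}$ one has $v_p(m_{d'})\le \alpha-1<\alpha\le v_p(\delta)$ — this is exactly the parenthetical estimate recorded in the statement — so the $p$-adic binomial series $(m_{d'}+\delta)^{-n_{d'}}=\sum_{l_{d'}\ge 0}\binom{-n_{d'}}{l_{d'}}\delta^{l_{d'}}\,m_{d'}^{-n_{d'}-l_{d'}}$ converges: its general term has $p$-adic valuation at least $l_{d'}\bigl(v_p(\delta)-v_p(m_{d'})\bigr)-n_{d'}v_p(m_{d'})\to+\infty$, using that $\binom{-n_{d'}}{l_{d'}}\in\mathbb{Z}$ is $p$-integral.

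I would then substitute these expansions for each factor $d'=1,\dots,d$, interchange the \emph{finite} sum over the simplex $0<m_1<\cdots<m_d<m$ with the convergent series over $(l_1,\dots,l_d)\in\mathbb{N}^d$, and regroup. The interchange is free here precisely because the simplex is finite ($m=p^{\alpha}<\infty$), so only the convergence just established is needed. For each fixed tuple $(l_1,\dots,l_d)$ the remaining inner sum over $m_1,\dots,m_d$ is, up to the weight and root-of-unity normalizations, exactly $\har_m$ with every exponent $n_{d'}$ raised to $n_{d'}+l_{d'}$; pulling out the factors $\binom{-n_{d'}}{l_{d'}}\delta^{l_{d'}}$ yields the claimed formula. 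Here I would resolve the remaining typos in the stated right-hand side, reading $\prod_{d'=1}^{d}\delta^{l_{d'}}\binom{-n_{d'}}{l_{d'}}$ (the printed $\binom{-n_i}{s_i}$ and the index $j$ being slips), and reconciling the overall weight prefactor between the $\delta^{n_d+\cdots+n_1}$ of the first line and the $m^{\sum(n_{d'}+l_{d'})}$ built into $\har_m$ via $\delta=um$.

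The computation is essentially bookkeeping, and the whole argument is parallel to the corresponding lemma of part I, with the functions $f_{i_{d'}}$ as the only new ingredient. The one genuine point — and the step I expect to be the main (though minor) obstacle — is the $p$-adic convergence of the binomial expansion together with the careful tracking of the normalizing prefactors so that the powers of $\delta$, the binomial coefficients, and the raised weights line up with the asserted formula. This is exactly where both hypotheses are indispensable: without $m=p^{\alpha}$ (hence $v_p(m_{d'})<\alpha$) and $p^{\alpha}\mid\delta$ (hence $v_p(\delta)\ge\alpha$) the series would diverge, and without $p^{\alpha}\mid\delta$ the freezing $f_{i_{d'}}(m_{d'}+\delta)=f_{i_{d'}}(m_{d'})$ would fail.
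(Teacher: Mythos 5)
Your proof is correct and is precisely the computation the paper intends: the paper's own proof of this lemma is just ``Clear'', and your two ingredients --- freezing $f_{i_{d'}}(m_{d'}+\delta)=f_{i_{d'}}(m_{d'})$ by periodicity modulo $p^{\alpha}$, and the convergent $p$-adic binomial expansion $(m_{d'}+\delta)^{-n_{d'}}=\sum_{l\geq 0}\binom{-n_{d'}}{l}\delta^{l}m_{d'}^{-n_{d'}-l}$, justified by $v_{p}(m_{d'})<\alpha\leq v_{p}(\delta)$ --- are exactly the ones the paper deploys explicitly in its proof of the localized $\Sigma$-harmonic Ihara action in \S 6.3. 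Your handling of the typos is also the right reading (the coefficients should be $\prod_{d'}\delta^{l_{d'}}\binom{-n_{d'}}{l_{d'}}$, which matches the unweighted sums $\mathfrak{h}$ literally, while under the weighted normalization they become $u^{n_{d'}+l_{d'}}$ via $\delta=um$), so your explicit flagging of the prefactor reconciliation is a genuine completion of the statement rather than a gap in your argument.
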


\begin{proof} Clear.
\end{proof}

\noindent The second property is the formula of splitting. $\frak{h}_{m}$ refers to the non-weighted multiple harmonic sums : $\har_{m}(w) = m^{\weight(w)}\frak{h}_{m}(w)$. See \cite{I-2}, \S4 and \S5 for more details on the notations.

\begin{Lemma} \label{splitting} Let $N \in \mathbb{N}^{\ast}$, $\{t_{1},\ldots,t_{r}\} \subset [1,N-1]$ with $t_{1} < \ldots < t_{r}$. We also denote by $t_{0} = 0$ and $t_{r+1} = N$.
\begin{multline} \frak{h}_{m} \bigg( \begin{array}{c} f_{i_{d}},\ldots,f_{i_{1}} \\ z_{i_{d+1}},\ldots,z_{i_{1}} \\ n_{d},\ldots,n_{1} \end{array} \bigg) =
\\  \sum_{\substack{S = \{t_{i_{1}},\ldots,t_{i_{T}}\} \subset \{t_{1},\ldots,t_{r}\}
\\ \imath : S \rightarrow \{1,\ldots,d\}\text{ injective }
	\\ m_{\imath(i_{j})}
\\ S_{0} \amalg \ldots \amalg S_{r} = \{1,\ldots,d\} - f(S) \text{, such that}(\ast)}} 
\prod_{j=1}^{T} \frac{f_{\imath(i_{j})}(m_{\imath(i_{j})}) \mod p^{\alpha})}{m_{\imath(i_{j})}^{n_{\imath(i_{j})}}} \prod_{i=0}^{r} \frak{h}_{t_{i},t_{i+1}}\bigg(\bigg( \begin{array}{c} f_{i_{d}},\ldots,f_{i_{1}} \\ z_{i_{d+1}},\ldots,z_{i_{1}} \\ n_{d},\ldots,n_{1} \end{array} \bigg) \text{restricted to }S_{i} \bigg) 
\end{multline}
\noindent where the condition $(\ast)$ is that $S_{0} \amalg \ldots \amalg S_{r}$ is an increasing connected partition of $\{1,\ldots,d\} - f(S)$, such that 
$S_{i_{j}} \amalg \ldots \amalg S_{i_{j+1}-1}$ is an increasing connected partition of $]f(t_{i_{j}}),f(t_{i_{j+1}}))[$.
\end{Lemma}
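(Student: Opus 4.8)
The plan is to prove the identity by a direct reindexing of the finite sum defining $\frak{h}_m$, organizing the domain of summation according to the position of each summation index relative to the cut points $t_0 = 0 < t_1 < \ldots < t_r < t_{r+1} = N$. First I would observe that for each index $k \in \{1,\ldots,d\}$ the value $m_k$ satisfies exactly one of two mutually exclusive conditions: either $m_k = t_i$ for a unique $i \in \{1,\ldots,r\}$, or $t_i < m_k < t_{i+1}$ for a unique $i \in \{0,\ldots,r\}$. Collecting the indices of the first type yields the subset $f(S) \subset \{1,\ldots,d\}$ together with the injection $\imath$ recording which summation index lands on which cut point; collecting the indices of the second type into fibers over the open intervals $]t_i,t_{i+1}[$ yields the blocks $S_0,\ldots,S_r$. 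Because the $m_k$ are strictly increasing, this assignment automatically respects the ordering: an index hitting $t_{i_j}$ must precede any index hitting $t_{i_{j+1}}$, and the unassigned indices lying strictly between them must fill the intervals $]f(t_{i_j}),f(t_{i_{j+1}})[$ in increasing connected blocks. This is exactly the content of condition $(\ast)$, and conversely every datum $(S,\imath,\{S_i\})$ satisfying $(\ast)$ arises from a unique admissible sub-domain of summation; hence the configurations index a partition of $\{0 < m_1 < \ldots < m_d < N\}$.

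Next I would verify that the summand factorizes along this decomposition. The indices in $f(S)$ contribute the explicit factors $\frac{f_{\imath(i_j)}(m_{\imath(i_j)} \bmod p^{\alpha})}{m_{\imath(i_j)}^{n_{\imath(i_j)}}}$, obtained by reading off the letters pinned to the fixed cut-point values, while the indices grouped in a block $S_i$ recombine into the partial harmonic sum $\frak{h}_{t_i,t_{i+1}}$ of the word restricted to $S_i$. The step requiring care is the recombination of the root-of-unity prefactors $\big(\frac{\xi_N^{j_{k+1}}}{\xi_N^{j_k}}\big)^{m_k}$ across the boundaries of consecutive blocks. When a sub-word is extracted and its summation restricted to an interval $]t_i,t_{i+1}[$, the telescoping of these ratios produces precisely the boundary prefactors appearing in the definition of $\frak{h}_{t_i,t_{i+1}}$, the same phenomenon already visible in the factor $\frac{(\xi_N^{j_1})^{\delta}}{(\xi_N^{j_{d+1}})^{\delta+m}}$ of Lemma \ref{shifting}. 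One checks that these boundary contributions, accumulated over all blocks and over the hit indices between them, reassemble without loss into the original product over $k = 1,\ldots,d$, and that the weight exponents redistribute so that each partial sum $\frak{h}_{t_i,t_{i+1}}$ carries exactly the exponents $n_k$ attached to its own letters.

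Finally, summing the factorized summand over all admissible configurations reproduces the claimed formula, since these configurations are in bijection with the cells of the partition established in the first step. I expect the main obstacle to be purely combinatorial bookkeeping: confirming that the restriction-to-$S_i$ operation returns precisely the sub-word with the correct weight exponents, and that the telescoping of the root-of-unity ratios at the cut points neither creates nor destroys a boundary monomial. Once the factorization of a single summand is pinned down with these conventions, the passage to the full identity is a mechanical reorganization of a finite sum.
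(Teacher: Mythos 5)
Your proof is correct and is exactly the argument the paper has in mind: the paper's own proof of this lemma is literally the single word ``Clear'', i.e.\ it treats the statement as following from the direct partition of the summation domain $\{0<m_{1}<\ldots<m_{d}<N\}$ according to whether each $m_{k}$ hits a cut point $t_{i}$ or falls in an open interval $]t_{i},t_{i+1}[$, followed by factorization of the summand over the resulting blocks --- which is precisely what you carry out. Your extra care with the telescoping of the root-of-unity prefactors at block boundaries (so that they reassemble into the boundary factors built into $\frak{h}_{t_{i},t_{i+1}}$, the same phenomenon as the factor $\frac{(\xi_{N}^{j_{1}})^{\delta}}{(\xi_{N}^{j_{d+1}})^{\delta+m}}$ in the shifting lemma) supplies exactly the bookkeeping the paper omits.
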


\begin{proof} Clear. \end{proof}

\subsection{Notations for generating series of multiple harmonic sums in the framework $\Sigma$ \label{notation mhs sigma}}

In the framework $\Sigma$, we need to take into account both the generating series of multiple harmonic sums and the generating series of localized multiple harmonic sums.

\begin{Definition} i) Let $\mathcal{W}_{\har}^{\Sigma,\mu_{p^{\alpha}N},B}$ be the set of words of the form $\bigg( \begin{array}{c} f_{i_{d}},\ldots,f_{i_{1}} \\ \xi_{N}^{j_{d+1}} ,\ldots,\xi_{N}^{j_{1}} \\ n_{d},\ldots,n_{1} \end{array} \bigg)$, as in equation (\ref{eq:generic multiple harmonic sums}) called $\Sigma$-harmonic words. Let $\mathcal{W}_{\har,\loc}^{\Sigma,\mu_{p^{\alpha}N},B}$ be the larger set of similar words with $n_{d},\ldots,n_{1} \in \mathbb{Z}$, called localized $\Sigma$-harmonic words.
\newline ii) Let $K_{p^{\alpha}N} \langle\langle e_{0 \cup \mu_{p^{\alpha}N}}^{B} \rangle\rangle_{\har}^{\Sigma} = \prod_{w \in \mathcal{W}_{\har}^{\Sigma,\mu_{p^{\alpha}N},B}} K_{p^{\alpha}N}.w$, and 
$K_{p^{\alpha}N} \langle\langle e_{0}^{\pm 1} \cup e_{\mu_{p^{\alpha}N}}^{B}  \rangle\rangle_{\har}^{\Sigma} = \prod_{w \in \mathcal{W}_{\har,\loc}^{\Sigma,\mu_{p^{\alpha}N},B}} K_{p^{\alpha}N}.w$
\newline iii) The coefficient of an element $f \in K_{p^{\alpha}N} \langle\langle e_{0 \cup \mu_{p^{\alpha}N}}^{B} \rangle\rangle_{\har}^{\Sigma}$ in front of a word $w \in \mathcal{W}_{\har}^{\Sigma,\mu_{p^{\alpha}N}}$ is denoted by $f[w]$ (as for the elements of $K \langle\langle e_{z_{0}},\ldots,e_{z_{r}} \rangle\rangle$ in \S2.1). Same notation for the localized variant.
\newline iv) Let $(K_{p^{\alpha}N} \langle\langle e_{0 \cup \mu_{p^{\alpha}N}}^{B} \rangle\rangle_{\har}^{\Sigma})_{S} \subset K \langle\langle e_{0 \cup \mu_{p^{\alpha}N}}^{B} \rangle\rangle_{\har}^{\Sigma}$, resp. $(K \langle\langle e_{0}^{\pm 1} \cup e_{\mu_{p^{\alpha}N}}^{B} \rangle\rangle_{\har}^{\Sigma})_{S} \subset K \langle\langle  e_{0}^{\pm 1} \cup e_{\mu_{p^{\alpha}N}}^{B} \rangle\rangle_{\har}^{\Sigma}$ be the subset of elements $f$ defined by the condition $\sup_{\substack{w\text{ of weight n} \\ \text{ of depth d}}}  |f[w]|_{p}
\rightarrow_{n \rightarrow \infty} 0$.
\end{Definition}

\begin{Definition} i) For all $n \in \mathbb{N}^{\ast}$, let $\har_{n}^{\mu_{p^{\alpha}N},B}= (\har_{n}(w))_{w \in \mathcal{W}_{\har}^{\Sigma,\mu_{p^{\alpha}N},B}} \in K_{p^{\alpha}N} \langle\langle e_{0 \cup \mu_{p^{\alpha}N}}^{B} \rangle\rangle_{\har}^{\Sigma}$ and 
	 $\har_{n}^{\mu_{p^{\alpha}N},B,\loc}= (\har_{n}(w))_{w \in \mathcal{W}_{\har,\loc}^{\Sigma,\mu_{p^{\alpha}N},B}} \in K_{p^{\alpha}N} \langle\langle e_{0}^{\pm 1} \cup e_{\mu_{p^{\alpha}N}}^{B} \rangle\rangle_{\har}^{\Sigma}$ 
\newline ii) For all $I \subset \mathbb{N}$, let $\har_{I}^{\Sigma,\mu_{p^{\alpha}N},B} = (\har_{n}^{\Sigma,\mu_{p^{\alpha}N},B})_{n \in I} \in \Map(I, K_{p^{\alpha}N} \langle\langle e_{0 \cup \mu_{p^{\alpha}N}}^{B} \rangle\rangle_{\har}^{\Sigma})$ and 
$\har_{I,\loc}^{\Sigma,\mu_{p^{\alpha}N},B} = (\har_{n}^{\Sigma,\mu_{p^{\alpha}N},B})_{n \in I} \in \Map(I, K_{p^{\alpha}N} \langle\langle e_{0 \cup \mu_{p^{\alpha}N}}^{B} \rangle\rangle_{\har,\loc}^{\Sigma})$.
\end{Definition}

\begin{Definition} \label{modification of parameters}
	For any map $\varphi : \{\xi_{N}^{1},\ldots,\xi_{N}^{N}\} \rightarrow \{\xi_{N}^{1},\ldots,\xi_{N}^{N}\}$, we define  $\varphi_{\ast}(\har_{n}^{\mu_{N}})$ as the element of 
	$K_{p^{\alpha}N} \langle\langle e_{0 \cup \mu_{N}} \rangle\rangle_{\har}^{\Sigma}$
	\noindent whose coefficient at a word $\big(\begin{array}{c} \xi_{N}^{j_{d+1}},\ldots,\xi_{N}^{j_{1}} \\ n_{d},\ldots,n_{1} \end{array} \big)$ is $\har_{n}\big(\begin{array}{c} \varphi(\xi_{N}^{j_{d+1}}),\ldots,\varphi(\xi_{N}^{j_{1}}) \\ n_{d},\ldots,n_{1} \end{array}\big)$.
\end{Definition}

\subsection{The localized $\Sigma$-harmonic Ihara action at $p^{\alpha}N$-th roots of unity}

\begin{Proposition-Definition} There exists a natural explicit map
$$ (\circ_{\har}^{\Sigma,\mu_{p^{\alpha}N},B})_{\loc}  : (K_{p^{\alpha}N}\langle \langle e_{0 \cup \mu_{p^{\alpha}N}}^{B} \rangle\rangle_{\har}^{\Sigma})_{S} \times \Map(\mathbb{N},K_{p^{\alpha}N}\langle\langle e_{0 \cup \mu_{N}} \rangle\rangle_{\har}^{\Sigma}) \rightarrow \Map(\mathbb{N},K_{p^{\alpha}N}\langle \langle e_{0 \cup \mu_{p^{\alpha}N}}^{B} \rangle\rangle) $$
	\noindent which satisfies the equation 
$$ \har_{p^{\alpha}\mathbb{N}}^{\mu_{p^{\alpha}N},B}  = \har_{p^{\alpha}}^{\mu_{p^{\alpha}N},B} \circ  (z \mapsto z^{p^{\alpha}})_{\ast}\har_{n}^{\mu_{N},\loc} $$
\end{Proposition-Definition}

\begin{proof} The proof is the same with the proposition-definition of $ (\circ_{\har}^{\Sigma})_{\loc}$ for $X_{K_{N},N}$ in \cite{I-2}, \S5. We start with $\har_{p^{\alpha}m}(w)$, for a word $w$, written as an iterated sum indexed by $0<m_{1}< \ldots < m_{d}<m$, and we introduce the Euclidean division of $m_{i}$ by $p^{\alpha}$ : $m_{i} = p^{\alpha} u_{i} + r_{i}$ ; we write, for $n_{i}\in \mathbb{N}^{\ast}$, $m_{i}^{-n_{i}} = r_{i}^{-n_{i}}(\frac{ p^{\alpha} u_{i}}{r_{i}} +1)^{-n_{i}} = r_{i}^{-n_{i}} \sum_{l_{i}\geq 0} {-n_{i} \choose l_{i}} \big( \frac{ p^{\alpha} u_{i}}{r_{i}} \big)^{l_{i}}$. This gives the result, via the formula of shifting (Lemma \ref{shifting}) and the formula of splitting (Lemma \ref{splitting}).
\newline If $n \in \mathbb{N}^{\ast}$ has the Euclidean division by $p^{\alpha}$ defined by $n= p^{\alpha}u + r$, we have, for all $i=1,\ldots,p^{\alpha}$ and $j=1,\ldots,N$, $f_{i}(n \mod p^{\alpha}) \xi_{N}^{jn}= (f_{i}(r) \xi_{N}^{jr}) (\xi_{N}^{jp^{\alpha}})^{u}$ : we see that only $\xi_{N}^{jp^{\alpha}}$ apperas in the part of the computation which depends on $u$ ; this is why the computation works in the same way with $X_{K_{N},N}$.
\end{proof}

\subsection{Conclusion}

In \cite{I-2}, \S5, we have constructed a map of "elimination of the localization" ; this is a natural map
$$ \elim : \Map(\mathbb{N},K_{p^{\alpha}N}\langle \langle e_{0 \cup \mu_{p^{\alpha}N}}^{B} \rangle\rangle) \rightarrow \Map(\mathbb{N},K_{p^{\alpha}N} \langle\langle e_{0}^{\pm 1} \cup e_{ \mu_{p^{\alpha}N}}^{B} \rangle\rangle) $$
\noindent which satisfies
$$ \elim (\har_{\mathbb{N}}) = \har_{\mathbb{N}}^{\loc} $$
\noindent (The terminology refers for the dual of $\elim$, which indeed suppresses the localization.)

\begin{Proposition-Definition} Let 
$$  \circ_{\har}^{\Sigma,\mu_{p^{\alpha}N},B} = \circ_{\har,\loc}^{\Sigma,\mu_{p^{\alpha}N},B} \circ (\id \times \elim) $$
\noindent We call it the $\Sigma$-harmonic Ihara action of $X_{K_{p^{\alpha}N},p^{\alpha}N}$. We have : 
$$ \har_{p^{\alpha}\mathbb{N}}^{\mu_{p^{a}N},B} = \har_{p^{\alpha}}^{\mu_{p^{\alpha}N},B} \text{ } \circ_{\har}^{\Sigma} \text{ }(z \mapsto z^{p^{\alpha}})_{\ast}\har_{\mathbb{N}}^{\mu_{N}}$$
\end{Proposition-Definition}

\begin{proof} Consequence of \S6.3 and of the property of $\elim$ reviewed above.
\end{proof}

\begin{Remark} It can be sometimes more natural to consider the dual of $\circ_{\har}^{\Sigma}$. We call it the $\Sigma$-harmonic Ihara coaction of $X_{K_{p^{\alpha}N},p^{\alpha}N}$ ; same terminology for the duals of the other harmonic Ihara actions.
\end{Remark}

\begin{Definition} i) Let $\comp^{\smallint \leftarrow \Sigma,\mu_{p^{\alpha}N},B} : (K_{p^{\alpha}N} \langle\langle e_{0 \cup \mu_{p^{\alpha}N}}^{B} \rangle\rangle_{\har}^{\Sigma})_{S} \rightarrow K_{p^{\alpha}N} \langle\langle e_{0 \cup \mu_{p^{\alpha}N}}^{B} \rangle\rangle_{S}$ be defined by : 
$\comp^{\smallint \leftarrow \Sigma,\mu_{p^{\alpha}N},B} [e_{0}^{l}\tilde{e}_{f_{i_{d+1}}\xi_{N}^{j_{d+1}}} e_{0}^{n_{d}-1}\tilde{e}_{f_{i_{d}}\xi_{N}^{j_{d}}}\ldots e_{0}^{n_{1}-1}\tilde{e}_{f_{i_{1}}\xi_{N}^{j_{1}}}]$ is the coefficient of $n^{l}\har_{n}(\emptyset)$ in the coefficient of $\bigg(\begin{array}{c} f_{i_{d}},\ldots,f_{i_{1}} 
\\ \xi_{N}^{j_{d}},\ldots,\xi_{N}^{j_{1}} \\ n_{d},\ldots,n_{1} \end{array} \bigg)$ of the equation (\ref{eq:generic harmonic Ihara action}).
\newline ii) Let $\comp^{\Sigma \leftarrow \smallint,\mu_{p^{\alpha}N},B} : K_{p^{\alpha}N} \langle\langle e_{0 \cup \mu_{p^{\alpha}N}}^{B} \rangle\rangle_{S} \rightarrow (K_{p^{\alpha}N}\langle\langle e_{0 \cup \mu_{p^{\alpha}N}}^{B} \rangle\rangle_{\har}^{\Sigma})_{S}$ be the map defined by
\newline $\comp^{\Sigma \leftarrow \smallint,\mu_{p^{\alpha}N},B}(f) \bigg[ \bigg(\begin{array}{c} f_{i_{d}},\ldots,f_{i_{1}} 
\\ \xi_{N}^{j_{d}},\ldots,\xi_{N}^{j_{1}} \\ n_{d},\ldots,n_{1} \end{array} \bigg) \bigg] = \sum_{l\geq 0} f[e_{0}^{l}\tilde{e}_{f_{i_{d+1}}\xi_{N}^{j_{d+1}}} e_{0}^{n_{d}-1}\tilde{e}_{f_{i_{d}}\xi_{N}^{j_{d}}}\ldots e_{0}^{n_{1}-1}\tilde{e}_{f_{i_{1}}\xi_{N}^{j_{1}}}]$.
\newline We call them the comparison maps between sums and integrals.
\end{Definition}

\begin{Proposition} We have $\comp^{\Sigma \leftarrow \smallint} \circ \comp^{\smallint \leftarrow \Sigma} = \id$.
\end{Proposition}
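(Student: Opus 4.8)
The plan is to reduce the identity to a coefficient-by-coefficient verification and then to identify the composite with an evaluation of the generic harmonic Ihara action equation at the trivial index $n=1$, where that equation returns its input unchanged.

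First I would fix an arbitrary $\Sigma$-harmonic word $w_{\har}=\bigg(\begin{array}{c} f_{i_{d}},\ldots,f_{i_{1}} \\ \xi_{N}^{j_{d}},\ldots,\xi_{N}^{j_{1}} \\ n_{d},\ldots,n_{1}\end{array}\bigg)$, abbreviate the associated integral-side word by $w' = \tilde{e}_{f_{i_{d+1}}\xi_{N}^{j_{d+1}}} e_{0}^{n_{d}-1}\tilde{e}_{f_{i_{d}}\xi_{N}^{j_{d}}}\cdots e_{0}^{n_{1}-1}\tilde{e}_{f_{i_{1}}\xi_{N}^{j_{1}}}$, and unwind the two definitions in sequence. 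For $g \in (K_{p^{\alpha}N}\langle\langle e_{0 \cup \mu_{p^{\alpha}N}}^{B}\rangle\rangle_{\har}^{\Sigma})_S$ one obtains
$$(\comp^{\Sigma \leftarrow \smallint}\circ \comp^{\smallint \leftarrow \Sigma})(g)[w_{\har}] = \sum_{l \geq 0}\comp^{\smallint \leftarrow \Sigma}(g)[e_{0}^{l}w'],$$
and by definition each summand is the coefficient of $n^{l}\har_{n}(\emptyset)$ in the $w_{\har}$-coefficient of the generic harmonic Ihara action equation (the one referenced in both comparison-map definitions) evaluated on $g$. Denote this $w_{\har}$-coefficient, viewed as a series in $n$, by $A_{g}[w_{\har}](n)$.

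The key observation I would exploit is that $\har_{n}(\emptyset)=1$ for all $n$, since the empty word has weight $0$ and its defining iterated sum is the empty product. Hence extracting the coefficient of $n^{l}\har_{n}(\emptyset)$ is exactly extracting the coefficient of $n^{l}$ in $A_{g}[w_{\har}](n)$, and therefore
$$\sum_{l\geq 0}\comp^{\smallint \leftarrow \Sigma}(g)[e_{0}^{l}w'] = \sum_{l\geq 0}\big(\text{coefficient of } n^{l} \text{ in } A_{g}[w_{\har}](n)\big) = A_{g}[w_{\har}](1),$$
the specialization of the Ihara-action series at $n=1$. The rearrangement and the existence of the infinite sum are legitimate because the $(\cdot)_{S}$-summability hypothesis forces $|g[w]|_{p}\to 0$ as $\weight(w)\to\infty$ at fixed $\depth$, so all the $p$-adic series involved converge; I would record this as a preliminary uniform-convergence remark, with the depth $d$ held fixed throughout.

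It then remains to prove $A_{g}[w_{\har}](1)=g[w_{\har}]$, i.e.\ that the generic harmonic Ihara action is the identity at the index $n=1$. This is where the normalizations conspire: in the action $\tau(n)(\cdots)\circ^{\smallint_{0}^{1}}_{U\Lie}h_{n}$ underlying $\circ_{\har}^{\smallint_{0}^{z<<1}}$ one has $\tau(1)=\id$ by Definition \ref{def of tau}, while $\har_{1}(w)=0$ for every nonempty $w$ because the range $0<m_{1}<\cdots<m_{d}<1$ is empty; thus the index-$1$ component of the harmonic generating sequence is the algebra unit and the action collapses onto its input. I would verify this directly by specializing the explicit formula of Proposition-Definition \ref{dR-rt harmonic Ihara action} at $n=1$, exactly as in the $\mu_{N}$-case of \cite{I-2}, the extra congruence letters $\tilde{e}_{\xi_{N}^{j},f_{i}}$ contributing nothing beyond bookkeeping. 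The main obstacle is precisely this last step: one must check that the weight/depth grading and the congruence-letter bookkeeping are preserved under the $n=1$ specialization, so that the limit map and the $U\Lie$-action genuinely trivialize, and that the interchange of the $l$-summation with coefficient extraction holds uniformly across the depth stratification; once these are in place the equality $A_{g}[w_{\har}](1)=g[w_{\har}]$ follows and, $w_{\har}$ being arbitrary, so does $\comp^{\Sigma \leftarrow \smallint}\circ \comp^{\smallint \leftarrow \Sigma}=\id$.
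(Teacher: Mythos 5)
Your reduction in the first half is a correct unwinding of the definitions: writing the $w_{\har}$-coefficient of $g \circ_{\har}^{\Sigma} (z\mapsto z^{p^{\alpha}})_{\ast}\har_{\mathbb{N}}$ as a formal expansion $\sum_{u,l} c_{u,l}(g)\, n^{l}\har_{n}(u)$ over harmonic words $u$, the composite $(\comp^{\Sigma\leftarrow\smallint}\circ\comp^{\smallint\leftarrow\Sigma})(g)[w_{\har}]$ is indeed $\sum_{l} c_{\emptyset,l}(g)$, and since $\har_{1}(u)=\delta_{u,\emptyset}$ this coincides with your $A_{g}[w_{\har}](1)$. The gap is the remaining claim $A_{g}[w_{\har}](1)=g[w_{\har}]$. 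Non-circularly, this is only available for the single element $g=\har_{p^{\alpha}}$, where it is the action equation specialized at $n=1$; but the Proposition asserts the identity on all of $(K_{p^{\alpha}N}\langle\langle e_{0\cup\mu_{p^{\alpha}N}}^{B}\rangle\rangle_{\har}^{\Sigma})_{S}$, and the universal coefficients $c_{u,l}(g)$ are nonlinear expressions in the coefficients of $g$, so validity at the points $\har_{p^{\alpha}}$ implies nothing for general $g$. For general $g$, the assertion "the $\Sigma$-harmonic Ihara action of $g$ on the unit returns $g$" is exactly the statement to be proved, so it must be checked on the defining formula of $\circ_{\har}^{\Sigma}$ rather than invoked.

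That check is where the content lies, and your outline points it at the wrong object. The comparison maps of the Proposition are defined through the $\Sigma$-action of \S6, namely $\circ_{\har}^{\Sigma}=(\circ_{\har,\loc}^{\Sigma})\circ(\id\times\elim)$, not through the $\smallint_{0}^{z<<1}$-action of Proposition-Definition \ref{dR-rt harmonic Ihara action} that you propose to specialize; for that substitution-type action the acting element lives on the integral side and the output on the harmonic side, so "the action is the identity at $n=1$" is not even well-posed, and what its $n=1$ specialization encodes is the equality $\har_{p^{\alpha}}=\comp^{\Sigma\leftarrow\smallint}(\psi_{1}^{\mu_{p^{\alpha}N}})$, not the Proposition. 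For the relevant $\Sigma$-action, the collapse at $n=1$ splits into two statements: (a) the localized action of $g$ on the localized unit returns $g$ --- this part is formal, since in the Euclidean-division formula only the trivial term survives when all localized coefficients of the second argument vanish; and (b) the elimination of the localization sends the unit to the localized unit, i.e.\ for every nonempty localized harmonic word $v$ the coefficients $\mathcal{B}$ of $\elim$ satisfy $\sum_{l}\mathcal{B}_{v;\emptyset,l}=0$ (and $=1$ for $v=\emptyset$). Statement (b) is precisely the "property of the coefficients $\mathcal{B}$" from \cite{I-2}, \S5 that the paper's proof invokes, and nothing in your argument produces it: your two normalizations $\tau(1)=\id$ and $\har_{1}(w)=0$ never interact with $\elim$ at all. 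In short, your reformulation is fine, but the step you relegate to "bookkeeping" is the entire proof.
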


\begin{proof} This follows from the proof of the analog statement in \cite{I-2}, \S5 : it is indeed a consequence of a property of the coefficients $\mathcal{B}$ of the elimination of the localization.
\end{proof}

\subsection{Reindexation and extension to $\Map(\mathbb{N},K_{N}\langle\langle e_{\{0\}\cup\mu_{N}} \rangle\rangle_{\har})$}

We have defined $\circ_{\har}^{\Sigma}$ as a map 
$$ 
K_{p^{\alpha}N}\langle\langle e^{B}_{\{0\} \cup \mu_{p^{\alpha}N}} \rangle\rangle_{S}
\times \Map(\mathbb{N},
K_{p^{\alpha}N} \langle\langle e_{\{0\} \cup \mu_{N}} \rangle\rangle_{\har}^{\Sigma} )
\rightarrow 
\Map(\mathbb{N},K_{p^{\alpha}N}\langle\langle  e^{B}_{\{0\} \cup \mu_{p^{\alpha}N}}  \rangle\rangle_{\har}^{\Sigma}) $$
\noindent However, because of the equation involving multiple harmonic sums above, we can also view it as a map
$$ 
K_{p^{\alpha}N}\langle\langle e^{B}_{\{0\} \cup \mu_{p^{\alpha}N}} \rangle\rangle_{S}
\times \Map(\mathbb{N},
K_{p^{\alpha}N} \langle\langle e_{\{0\} \cup \mu_{N}} \rangle\rangle_{\har}^{\Sigma} )
\rightarrow 
\Map(p^{\alpha}\mathbb{N},K_{p^{\alpha}N}\langle\langle  e^{B}_{\{0\} \cup \mu_{p^{\alpha}N}}  \rangle\rangle_{\har}^{\Sigma}) $$
\noindent Let us adopt momentaneously this point of view.

\begin{Proposition-Definition} Under this point of view, the $\Sigma$-harmonic Ihara action of $X_{K_{p^{\alpha}N},p^{\alpha}N}$ extends to an explicit map : 
	$$ 
	K_{p^{\alpha}N}\langle\langle e_{\{0\} \cup \mu_{p^{\alpha}N}} \rangle\rangle_{S}
	\times \Map(\mathbb{N},
	K_{p^{\alpha}N} \langle\langle e_{\{0\} \cup \mu_{N}} \rangle\rangle_{\har}^{\Sigma} )
	\rightarrow 
	\Map(\mathbb{N},K_{p^{\alpha}N}\langle\langle  e_{\{0\} \cup \mu_{p^{\alpha}N}}  \rangle\rangle_{\har}^{\Sigma}) $$
\end{Proposition-Definition}

\begin{proof} We write the Euclidean division of $n$ by $p^{\alpha}$ : $n=p^{\alpha}u+r$. We write the formula of splitting of multiple harmonic sums at $p^{\alpha}u$ (Proposition \ref{shifting}). Then, we apply the $\Sigma$-harmonic Ihara action of $X_{K_{p^{\alpha}N},p^{\alpha}N}$ to the factors equal to multiple harmonic sums having bounds $(0,p^{\alpha}u)$ ; we apply the $p$-adic formula of shifting (Proposition \ref{splitting}) to the factors having bounds $p^{\alpha}u$ and $p^{\alpha}u+r$.
\end{proof}

\section{The $\int_{0}^{1}$-harmonic Ihara action extended to $\pi_{1}^{\un,\DR}(\mathbb{P}^{1} - \{0,\mu_{p^{\alpha}N},\infty\})$}

This paragraph is technically independent from the other ones, although it is motivated by \S\ref{Sigma harmonic action}. We take again the setting of \S3.
\newline 
\newline In \cite{I-3}, the definition and basic properties of of $\circ_{\har}^{\smallint_{0}^{1}}$ are established purely in the De Rham setting ; they require nothing crystalline. They only require the Ihara product, which is defined on $\Pi_{1,0}  =\pi_{1}^{\un,\DR}(\mathbb{P}^{1} -\{0,\mu_{p^{\alpha}N},\infty\},-\vec{1}_{1},\vec{1}_{0})$ by \cite{Deligne Goncharov}, \S5.
\newline 
\newline Let $K_{p^{\alpha}N}\langle \langle e_{0 \cup \mu_{p^{\alpha}N}}^{B} \rangle\rangle_{S'} \subset K_{p^{\alpha}N}\langle \langle  e_{0 \cup \mu_{p^{\alpha}N}}^{B} \rangle\rangle$ be the submodule of elements $h$ such that, for all words $w$, the series $\sum_{l\in \mathbb{N}} h[e_{0}^{l}w]$ is convergent ($S'$ stands for summable). It contains in particular $\tilde{\Pi}_{1,0}(K_{p^{\alpha}N})_{S'}$ and its image by $\Ad(e_{1}) : Z \mapsto Z^{-1}e_{1}Z$. We extend a map of comparison defined in \S6.

\begin{Definition} Let $\comp^{\Sigma \leftarrow \smallint,\mu_{p^{\alpha}N},B} :
\begin{array}{c} K_{p^{\alpha}N}\langle \langle e_{0 \cup \mu_{p^{\alpha}N}}^{B} \rangle\rangle_{S'} \rightarrow K_{p^{\alpha}N} \langle\langle e^{B}_{0 \cup \mu_{p^{\alpha}N}} \rangle\rangle_{\Sigma'}
\\ h \mapsto \sum_{\tilde{w}} h \big[ \frac{1}{1-e_{0}}
\tilde{w} \big] \tilde{w} \end{array}$ where the sum $\sum_{\tilde{w}}$ is over the words $\tilde{w}$ of the form $\tilde{e}_{f_{i_{d+1}}\xi_{N}^{j_{d+1}}}e_{0}^{n_{d}-1}\tilde{e}_{f_{i_{d}}\xi_{N}^{j_{d}}} \ldots e_{0}^{n_{1}-1}\tilde{e}_{f_{i_{1}}\xi_{N}^{j_{1}}}e_{0}^{n_{0}-1}$, $j_{1},\ldots,j_{d+1} \in \{1,\ldots,N\}$, $i_{1},\ldots,i_{d+1} \in \{1,\ldots,p^{\alpha}\}$, $n_{0},\ldots,n_{d} \in \mathbb{N}^{\ast}$.
\end{Definition}

\begin{Lemma} The image of $\comp^{\Sigma \leftarrow \smallint,\mu_{p^{\alpha}N},B}$ is equal to 
$K_{p^{\alpha}N} \langle \langle e_{0 \cup \mu_{p^{\alpha}N}}^{B} \rangle \rangle^{\smallint_{0}^{1}}_{\har} \subset K_{p^{\alpha}N} \langle\langle e_{0 \cup \mu_{p^{\alpha}N}}^{B} \rangle\rangle$ defined as the subset of the elements $f$ such that we have $f[e_{0}^{l}w] = 0$ for all $l \in \mathbb{N}^{\ast}$ and for all words $w$.
\end{Lemma}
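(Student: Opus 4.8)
The plan is to prove the two inclusions at once by showing that $\comp^{\Sigma \leftarrow \smallint,\mu_{p^{\alpha}N},B}$ restricts to the identity on the candidate target space. The starting point is the expansion $\frac{1}{1-e_{0}} = \sum_{l\geq 0} e_{0}^{l}$, so that for any $h$ in the source space $K_{p^{\alpha}N}\langle\langle e_{0 \cup \mu_{p^{\alpha}N}}^{B}\rangle\rangle_{S'}$ and any word $\tilde w$ of the prescribed shape one has $h\big[\frac{1}{1-e_{0}}\tilde w\big] = \sum_{l\geq 0} h[e_{0}^{l}\tilde w]$, the series converging exactly because $h$ lies in the summability space $\langle\langle\cdots\rangle\rangle_{S'}$.

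First I would check the inclusion of the image into $K_{p^{\alpha}N}\langle\langle e_{0 \cup \mu_{p^{\alpha}N}}^{B}\rangle\rangle^{\smallint_{0}^{1}}_{\har}$. By construction $\comp^{\Sigma\leftarrow\smallint,\mu_{p^{\alpha}N},B}(h)$ is supported only on the words $\tilde w = \tilde e_{f_{i_{d+1}}\xi_{N}^{j_{d+1}}}e_{0}^{n_{d}-1}\cdots e_{0}^{n_{1}-1}\tilde e_{f_{i_{1}}\xi_{N}^{j_{1}}}e_{0}^{n_{0}-1}$, that is, on words whose leftmost letter is a $\mu_{p^{\alpha}N}$-letter $\tilde e_{f_{i}\xi_{N}^{j}}$ rather than $e_{0}$. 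Hence its coefficient on any word $e_{0}^{l}w$ with $l\in\mathbb{N}^{\ast}$ vanishes, which is precisely the defining condition of $K_{p^{\alpha}N}\langle\langle e_{0 \cup \mu_{p^{\alpha}N}}^{B}\rangle\rangle^{\smallint_{0}^{1}}_{\har}$. The only point requiring care here is the elementary combinatorial fact that the index words $\tilde w$ parametrize \emph{exactly} the words not beginning with $e_{0}$: every such word factors uniquely as a $\mu$-letter followed by a block $e_{0}^{n_{d}-1}$, another $\mu$-letter, and so on, with all $n_{i}\in\mathbb{N}^{\ast}$.

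For the reverse inclusion, given $f \in K_{p^{\alpha}N}\langle\langle e_{0 \cup \mu_{p^{\alpha}N}}^{B}\rangle\rangle^{\smallint_{0}^{1}}_{\har}$ I would first observe that $f$ automatically lies in the summability space $S'$: for each word $w$, all the terms with $l\geq 1$ in $\sum_{l\geq 0} f[e_{0}^{l}w]$ vanish, because $e_{0}^{l}w$ then begins with $e_{0}$, so the series collapses to its single surviving term $f[w]$. Applying the map therefore gives $\comp^{\Sigma\leftarrow\smallint,\mu_{p^{\alpha}N},B}(f)[\tilde w] = \sum_{l\geq 0} f[e_{0}^{l}\tilde w] = f[\tilde w]$ for every $\tilde w$ beginning with a $\mu$-letter; and since both $f$ and its image vanish on all words beginning with $e_{0}$, we conclude $\comp^{\Sigma\leftarrow\smallint,\mu_{p^{\alpha}N},B}(f) = f$. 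Thus the map is the identity on $K_{p^{\alpha}N}\langle\langle e_{0 \cup \mu_{p^{\alpha}N}}^{B}\rangle\rangle^{\smallint_{0}^{1}}_{\har}$, so this subspace is contained in the image, and combined with the first step the two sets coincide.

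The argument is essentially formal and mirrors the corresponding statement of part I, so there is no deep obstacle; the only things demanding attention are notational. One must match the index set $\{\tilde w\}$ with the "no leading $e_{0}$" description of the target, and fix at the outset the convention on the empty word (the image always has vanishing constant term $f[\emptyset]=0$, while the stated condition $f[e_{0}^{l}w]=0$ for $l\in\mathbb{N}^{\ast}$ leaves $f[\emptyset]$ unconstrained), so that the equality is understood on the positive-weight part or with the constant term suppressed.
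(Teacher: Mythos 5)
Your proof is correct and takes essentially the same route as the paper's one-line proof: the paper likewise observes that every element of $K_{p^{\alpha}N} \langle \langle e_{0 \cup \mu_{p^{\alpha}N}}^{B} \rangle \rangle^{\smallint_{0}^{1}}_{\har}$ lies in the source space and is equal to its own image under $\comp^{\Sigma \leftarrow \smallint,\mu_{p^{\alpha}N},B}$ (your fixed-point argument), the inclusion of the image into that subspace being immediate from the fact that the map is supported on the words $\tilde{w}$ not beginning with $e_{0}$. Your closing remark on the empty word (the image forces $f[\emptyset]=0$ while the stated condition leaves the constant term unconstrained) is a legitimate refinement of a point the paper glosses over, and does not affect the substance of the argument.
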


\begin{proof} Indeed, each element of $K_{p^{\alpha}N} \langle \langle e_{0 \cup \mu_{p^{\alpha}N}}^{B} \rangle \rangle^{\smallint_{0}^{1}}_{\har}$ is in $K_{p^{\alpha}N} \langle\langle e_{0 \cup \mu_{p^{\alpha}N}}^{B} \rangle\rangle_{\Sigma'}$ and equal to its own image by $\comp^{\Sigma \leftarrow \smallint,\mu_{p^{\alpha}N},B}$.
\end{proof}

\begin{Proposition-Definition} \label{def De Rham Ihara}There exists a unique map 
\begin{center}
$\tilde{\Pi}_{1,0}(K_{p^{\alpha}N})_{\Sigma} \times K_{p^{\alpha}N} \langle\langle e_{0 \cup \mu_{p^{\alpha}N}}^{B}  \rangle\rangle^{\smallint_{0}^{1}}_{\har} \rightarrow K_{p^{\alpha}N} \langle\langle  e_{0 \cup \mu_{p^{\alpha}N}}^{B}  \rangle\rangle^{\smallint_{0}^{1}}_{\har}$.
\end{center}
\noindent making the following diagram commutative :
$$
\begin{array}{ccccc}
\tilde{\Pi}_{1,0}(K_{p^{\alpha}N})_{\Sigma} \times \Ad_{\tilde{\Pi}_{1,0}(K_{p^{\alpha}N})_{\Sigma}}(e_{1}) && 
\overset{\circ_{\Ad}^{\smallint_{0}^{1}} \circ (\Ad(e_{1}) \times \id)}{\longrightarrow}
&& \Ad_{\tilde{\Pi}_{1,0}(K_{p^{\alpha}N})_{S'}}(e_{1}) \\
\downarrow{\id \times \comp^{\Sigma \leftarrow \smallint,\mu_{p^{\alpha}N},B}} &&&& \downarrow{ \comp^{\Sigma \leftarrow \smallint,\mu_{p^{\alpha}N},B}} \\
\tilde{\Pi}_{1,0}(K_{p^{\alpha}N})_{\Sigma}(e_{1}) \times K_{p^{\alpha}N} \langle\langle  e_{0 \cup \mu_{p^{\alpha}N}}^{B}  \rangle\rangle^{\smallint_{0}^{1}}_{\har} && \overset{}{\longrightarrow} && K_{p^{\alpha}N} \langle\langle  e_{0 \cup \mu_{p^{\alpha}N}}^{B}  \rangle\rangle^{\smallint_{0}^{1}}_{\har}
\end{array}
$$
\noindent We call it the $\int_{0}^{1}$-harmonic Ihara action of $X_{K_{p^{\alpha}N},p^{\alpha}N}$ and denote it by $\circ_{\har}^{\smallint_{0}^{1}}$. 
\end{Proposition-Definition}

\begin{proof} Let us consider the coefficient $\comp^{\Sigma \leftarrow \smallint,\mu_{p^{\alpha}N},B} ( \Ad_{g}(e_{1}) \circ_{\Ad}^{\smallint_{0}^{1}} \Ad_{f}(e_{1}))[w]$ at any word $w$, viewed as a function of $g$ and $f$. By the formula for the dual of the adjoint Ihara action, one can see that its dependence on $f$ factorizes in a natural way through $\comp^{\Sigma\leftarrow \smallint,\mu_{p^{\alpha}N},B}(f)$. This defines $\circ_{\har}^{\smallint_{0}^{1}}$ and one can check that this map is unique.
\end{proof}

\noindent The $\int_{0}^{1}$-harmonic Ihara action is thus $\comp^{\Sigma\leftarrow \smallint,\mu_{p^{\alpha}N},B}_{\ast} \bigg(\Ad(e_{1})_{\ast}\bigg($ Ihara product$\bigg)\bigg)$ (where the push-forward refers to the set which is acted upon), characterized by the equation :
\begin{equation} \label{eq:char harmonic Ihara action} \comp^{\Sigma\leftarrow \smallint}( \Ad_{g}(e_{1}) \circ_{\Ad}^{\smallint_{0}^{1}} \Ad_{f}(e_{1})) = g \circ_{\har}^{\smallint_{0}^{1}} \comp^{\Sigma\leftarrow \smallint}(\Ad_{f}(e_{1}))
\end{equation}
\noindent One can also write an extension of where $\circ_{\Ad}^{\smallint_{0}^{1}}$ is replaced by $\circ_{U \Lie}^{\smallint_{0}^{1}}$ defined in \S3.3.2.

\section{The iteration of the harmonic Frobenius extended to $\pi_{1}^{\un,\DR}(\mathbb{P}^{1} - \{0,\mu_{p^{\alpha}N},\infty\})$}

We take again the setting of \S3, a basis $B$ of $H^{1,\DR}(\mathbb{P}^{1} - \{0,\mu_{p^{\alpha}N},\infty\})$ being chosen.

\subsection{Introduction}

We have studied in \cite{I-3} how the Frobenius iterated $\alpha$ times depends on $\alpha$ viewed as a $p$-adic integer. Unlike for $X_{K_{N},N}$, in the present case, the coefficients of the multiple harmonic sums which we consider depend on $p$ and $\alpha$, via the functions $f_{1},\ldots,f_{p^{\alpha}}$. It is thus less easy than in the usual case to study the limit $\alpha \rightarrow \infty$.
\newline In \cite{I-3}, given formal variables $\textbf{a}$, $\Lambda$, we have defined a map :

$$ (\widetilde{\text{iter}}_{\har}^{\Sigma})^{\textbf{a},\Lambda} : (K_{N}\langle\langle e_{\{0\}\cup\mu_{N}} \rangle \rangle_{\har}^{\Sigma})_{S} \rightarrow K_{N}[[\Lambda^{\textbf{a}}]][\textbf{a}](\Lambda)\langle\langle e_{\{0\}\cup\mu_{N}} \rangle\rangle^{\Sigma}_{\har} $$
\noindent such that, the map $(\text{iter}_{\har}^{\Sigma})^{\frac{\alpha}{\alpha_{0}},q^{\tilde{\alpha}_{0}}} : (K_{N} \langle\langle e_{\{0\}\cup\mu_{N}} \rangle\rangle_{\har}^{\Sigma})_{S} \rightarrow K_{N}\langle\langle e_{\{0\}\cup\mu_{N}}\rangle\rangle^{\smallint_{0}^{1}}_{\har}$ defined as the composition of $(\widetilde{\text{iter}}_{\har}^{\Sigma})^{\textbf{a},\Lambda}$ by the reduction modulo  $(\textbf{a}-\frac{\tilde{\alpha}}{\tilde{\alpha}_{0}},\Lambda-q^{\tilde{\alpha}_{0}})$, satisfies,
\begin{equation} \label{eq:third of I-3}\har_{q^{\tilde{\alpha}}} = (\iter_{\har}^{\Sigma})^{\frac{\tilde{\alpha}}{\tilde{\alpha}_{0}},q^{\tilde{\alpha}_{0}}} (\har_{q^{\tilde{\alpha}_{0}}})
\end{equation}
\noindent It was obtained by writing by hand, using elementary operations on multiple harmonic sums viewed as $p$-adic functions, an expression of $\har_{q^{\tilde{\alpha}}}$ in terms of $\har_{q^{\tilde{\alpha}_{0}}}$ for $(\tilde{\alpha}_{0},\tilde{\alpha}) \in (\mathbb{N}^{\ast})^{2}$ such that $\tilde{\alpha}_{0}|\tilde{\alpha}$. 

\subsection{Generalizations to $X_{X_{p^{a}N},p^{a}N}$ \label{5 point 2}}

The proof of the part of Theorem-Definition V-1.a concerning the maps of iteration of the harmonic Frobenius follows from going back to the proofs in \cite{I-3}, and making again the same observations : the computations can be adapted to our more general setting because the maps $f_{1},\ldots,f_{p^{\alpha}}$, applied to elements $m_{1}<\ldots<m_{d}\in \mathbb{N}$, depend only on the remainders of $m_{1},\ldots,m_{d}$ modulo $q^{\tilde{\alpha}} = p^{\alpha}$ and have coefficients of norm $\leq 1$.

\section{Application to the relation between direct and indirect methods to compute the Frobenius}

We sketch why the regularization of iterated integrals defined in \S4 can be computed in terms of the iteration of the harmonic Frobenius defined in \S8, and that the restriction of these maps to a certain subspaces has coefficients expressed in terms of $\pi_{1}^{\un,\DR}(X_{K_{N},N})$. This gives a connection between the formulas of the direct computation of the Frobenius \cite{I-1} and the ones of the indirect computation of the Frobenius \cite{I-2} \cite{I-3} and an interpretation of this connection in terms of a descent of the Frobenius extended to $\pi_{1}^{\un,\DR}(X_{K_{p^{\alpha}N},p^{\alpha}N})$.

\subsection{A subspace and descent to $\pi_{1}^{\un,\DR}(\mathbb{P}^{1} - \{0,\mu_{N},\infty\})$}

We sketch the proof of Proposition V-1.d stated in \S1.7.

\begin{Definition}
Let $e_{0 \cup \mu_{N}\cup \mu_{N}^{(p^{\alpha})}}$ be the set of letters formed by $e_{0}$,
$e_{\xi_{N}^{j}}$, $j=1,\ldots,N$, and 
$e_{(\xi_{N}^{j})^{(p^{\alpha})}}$, $j\in \{1,\ldots,N\}$, $r \in \{0,\ldots,p^{\alpha}-1\}$.
\end{Definition}

\noindent By associating $e_{z_{i}}$ to $\frac{dz}{z-z_{i}}$, and $e_{z_{i}}^{r \mod p^{\alpha}}$ to $\frac{p^{\alpha} z^{p^{\alpha}-1}dz}{z^{p^{\alpha}}-z_{i}^{p^{\alpha}}}$, we get a specific type of multiple harmonic sums.

\begin{Definition} \label{def mhs 2} We now assume $X_{K} = X_{K_{N},N}$ and $K=K_{N}$, with $N \in \mathbb{N}^{\ast}$ prime to $p$. The weighted multiple harmonic sums at roots of unity with congruences the numbers, for $d \in \mathbb{N}^{\ast}$, $n_{d},\ldots,n_{1} \in \mathbb{N}^{\ast}$, $j_{1},\ldots,j_{d+1} \in \{1,\ldots,N\}$, and $I,I' \subset \{1,\ldots,d\}$, $n \in \mathbb{N}^{\ast}$, 
\begin{equation}
\har_{m,I}^{\mod p^{\alpha}} \big( \begin{array}{c}  z_{i_{d+1}},\ldots,z_{i_{1}} \\ n_{d},\ldots,n_{1} \end{array} \big) = 
m^{n_{1}+\ldots+n_{d}}
\sum_{\substack{0=m_{0}<m_{1}<\ldots<m_{d}<m
\\ \text{ for }i \in I , n_{i-1} \equiv n_{i} \mod p^{\alpha}}} 
\bigg( \frac{1}{z_{j_{d+1}}} \bigg)^{m}
\prod_{j=1}^{d} \frac{ \bigg( \frac{z_{j_{i+1}}}{z_{j_{i}}} \bigg)^{m_{j}}}{m_{j}^{n_{j}}}
\end{equation}
\end{Definition}

\noindent We can now sketch the proof of Proposition V-1.d.

\begin{proof} (sketch) The result is obtained by redoing the constructions of the previous paragraphs with this particular type of multiple harmonic sums. It comes from how the conditions of the type $m_{i} \equiv m_{i-1} \mod p^{\alpha}$ are expressed in terms of the reduction of $n_{i}$ and $m_{i-1}$ modulo $p^{\alpha}$ : $m_{i} \equiv m_{i-1} \mod p^{\alpha}$ is equivalent to $r_{i} = r_{i-1}$ where $r_{i}$ and $r_{i-1}$ are the remainders of, respectively, $m_{i}$ and $m_{i-1}$ modulo $p^{\alpha}$.
\end{proof}

\subsection{Iteration of the harmonic Frobenius at $p^{\alpha}N$-th roots of unity and regularization at $p^{\alpha}N$-th roots of unity}

We sketch the proof of Proposition V-1.e stated in \S1.7.
\newline 
\newline The idea behind it is to play on the ambiguity of the parameter $q^{\tilde{\alpha}}$ in $\har_{q^{\tilde{\alpha}}}$ :
\newline - $q^{\tilde{\alpha}}$ is the upper bound of the domain of summation of  $\har_{q^{\tilde{\alpha}}}$ viewed as an iterated sum ; the question of studying the maps $m \in \mathbb{N}^{\ast} \mapsto \har_{m}(w) \in K$ with $m$ and $\har_{m}(w)$ viewed as $p$-adic numbers has appeared a lot in part I, via the study of the regularized version $m \in \mathbb{N}^{\ast} \mapsto \har^{\dagger_{p,\alpha}}_{m}(w) \in K$, generalized here in \S4.
\newline - $q^{\tilde{\alpha}}$ also expresses via $\tilde{\alpha}$ the number of iterations of the Frobenius ; the question of studying the iterated Frobenius as a function of the number of iterations has appeared in \cite{I-3}, and this has been generalized here in \S8.

\begin{proof} (sketch) The Proposition follows from the uniqueness of some power series expansions.
\end{proof}

\subsection{Interpretation in terms of the computation of the Frobenius}

The strategy of computation of the Frobenius in \cite{I-1} has three steps :
\newline 
\newline i) \emph{Formal algebraic solution to the differential equation of the Frobenius (\ref{eq:horizontality equation}).}
\newline By an immediate induction on the weight, equation (\ref{eq:horizontality equation}) amounts to say that each function $\Li_{p,\alpha}^{\dagger}[w]$ is a $K_{N}$-linear combination of iterated integrals of the differential forms (\ref{eq:differential forms}), where the coefficients of the linear combination are numbers $\Phi_{p,\alpha}[w']$ with $\weight(w')<\weight(w)$.
\newline We write a close formula for this property ; it involves the convolution product on the shuffle Hopf algebra as well as some combinatorics of another Hopf algebra related to a motivic Hopf algebra. We show that, in this close formula, each iterated integral of (\ref{eq:differential forms}) can be replaced by a regularized version. We also show that the close formula, which is initially inductive with respect to the weight, can actually be made inductive with respect to the depth, which is more efficient algorithmically.
\newline 
\newline ii) \emph{Explicit computation and bounds of valuation for each regularized iterated integrals of (\ref{eq:differential forms}).} 
\newline We find a formula for these functions which is inductive with respect to the depth. We show that they lie in a very small subspace of $\frak{A}^{\dagger}(U_{N})$, defined in terms of $p$-adically and weight-adically convergent infinite sums of prime weighted multiple harmonic sums (the prime weighted multiple harmonic sums are the weighted multiple harmonic sums $\har_{n}(w)$ such that $n=p^{\alpha}$ where $\alpha$ is the number of iterations of the Frobenius). We prove at the same time some bounds of valuations of the values of these functions.
\newline 
\newline iii) \emph{Reformulation of equation (\ref{eq:algebraic Frobenius equation}).}
\newline We rewrite equation (\ref{eq:algebraic Frobenius equation}) as a way to obtain the numbers $\zeta_{p,\alpha}(w)$ as $\mathbb{Q}$-linear combinations of the numbers  $\Li^{\dagger}_{p,\alpha}[w']$ which is, first, compatible with the depth filtration, and secondly, which involves rational coefficients having not too big $p$-adic norms.
\newline 
\newline Bringing together steps i), ii), iii), we deduce, first, an explicit formula for $(\Li_{p,\alpha}^{\dagger},\zeta_{p,\alpha})$ written through an induction on the depth, and, secondly, bounds of valuations for the functions $\Li_{p,\alpha}^{\dagger}[w]$ (Here we are refering to the canonical structure of the algebra on the rigid analytic functions on $U_{N}$ as a complete normed $K_{N}$-algebra) and for the numbers $\zeta_{p,\alpha}(w)$.
\newline 
\newline We see that the step of regularization can be done purely in terms of prime weighted multiple harmonic sums ; conversely, certain coefficients of multiple harmonic sums are actually equal to certain coefficients of iterated integrals.

\begin{Remark} This point of view has also the advantage of providing a canonical regularization. In I-1, \S4, the regularization of $p$-adic iterated integrals depended on choices.
\end{Remark}

\begin{Remark} This point of view also gives an interpretation of the formulas with parameters sketched in \cite{I-1}, \S6.3, c).
\end{Remark}

\appendix 

\section{$\pi_{1}^{\un,\DR}(\mathbb{P}^{1} - \{z_{0},z_{1},\ldots,z_{r},\infty\})$ and $p$-adic pseudo adjoint multiple polylogarithms}

Let $X_{K}$ be $\mathbb{P}^{1} - \{0,z_{1},\ldots,z_{r},\infty\}$ over a complete ultrametric normed field $K \supset \mathbb{Q}_{p}$ with $z_{1},\ldots,z_{r} \in K$ of norm $1$. We partially generalize to $\pi_{1}^{\un,\DR}(X_{K})$ the operations on multiple harmonic sums defined in part I and the previous paragraphs.

\subsection{Setting}

The weighted multiple harmonic sums under consideration are those of equation (\ref{eq:generic multiple harmonic sums}).
\newline Their localized variants are obtained by allowing the parameters $n_{1},\ldots,n_{d}$ in (\ref{eq:generic multiple harmonic sums}) to be any elements of $\mathbb{Z}$. They can be generalized as functions depending on locally analytic group homomorphisms $K^{\ast} \rightarrow K^{\ast}$, as in Remark \ref{character chi}.
\newline Let us formalize the indices of multiple harmonic sums :

\begin{Definition} Let $\mathcal{W}_{\har}^{\Sigma,X_{K}}$ be the set of words of the form $\big( \begin{array}{c} z_{j_{d+1}} ,\ldots,z_{j_{1}} \\ n_{d},\ldots,n_{1} \end{array} \big)$, as in equation (\ref{eq:generic multiple harmonic sums}) called $\Sigma$-harmonic words.
\newline Let $\mathcal{W}_{\har,\loc}^{\Sigma,X_{K}}$ be the larger set of sequences $\big( \begin{array}{c} z_{j_{d+1}},\ldots,z_{j_{1}} \\ n_{d},\ldots,n_{1} \end{array} \big)$, with $n_{d},\ldots,n_{1} \in \mathbb{Z}$, called localized $\Sigma$-harmonic words.
\newline We say that $\big( \begin{array}{c} z_{j_{d+1}},\ldots,z_{j_{1}} \\ n_{d},\ldots,n_{1} \end{array} \big)$ has weight $n_{d}+\ldots+n_{1}$ and depth $d$.
\end{Definition}

\noindent Let us formalize the spaces containing the generating sequences of multiple harmonic sums, and subspaces of "summable" elements :

\begin{Definition} i) Let $K \langle\langle e_{z_{0}},\ldots,e_{z_{r}} \rangle\rangle_{\har}^{\Sigma,X_{K}} = \prod_{w \in \mathcal{W}_{\har}^{\Sigma,X_{K}}} K.w$
\newline Let $K \langle\langle e_{0}^{\pm 1},e_{z_{1}},\ldots,e_{z_{r}} \rangle\rangle_{\har,\loc}^{\Sigma,X_{K}} = \prod_{w \in \mathcal{W}_{\har,\loc}^{\Sigma,X_{K}}} K.w$
\newline ii) The coefficient of an element $f \in K \langle\langle e_{z_{0}},\ldots,e_{z_{r}} \rangle\rangle_{\har}^{\Sigma}$ in front of a word $w \in \mathcal{W}_{\har}^{\Sigma}$ is denoted by $f[w]$ (as for the elements of $K \langle\langle e_{z_{0}},\ldots,e_{z_{r}} \rangle\rangle$ in \S2.1). Same notation for $f \in (K \langle\langle e_{0}^{\pm 1},e_{z_{1}},\ldots,e_{z_{r}} \rangle\rangle_{\har}^{\Sigma}$.
\newline iii) Let $(K \langle\langle e_{z_{0}},\ldots,e_{z_{r}} \rangle\rangle_{\har}^{\Sigma})_{S} \subset K \langle\langle e_{z_{0}},\ldots,e_{z_{r}} \rangle\rangle_{\har}^{\Sigma}$, resp. $(K \langle\langle e_{0}^{\pm 1},e_{z_{1}},\ldots,e_{z_{r}} \rangle\rangle_{\har}^{\Sigma})_{S} \subset K \langle\langle e_{0}^{\pm 1},e_{z_{1}},\ldots,e_{z_{r}} \rangle\rangle_{\har}^{\Sigma}$ be the subset of elements defined by the condition $\displaystyle\sup_{\substack{w\text{ of weight s} \\ \text{ of depth d}}}  |f[w]|_{K}
\rightarrow_{s \rightarrow \infty} 0$.
\end{Definition}

\noindent Let us formalize the generating sequences of multiple harmonic sums :

\begin{Definition} i) For all $n \in \mathbb{N}^{\ast}$, let $\har_{n}^{X_{K}} = (\har_{n}(w))_{w \in \mathcal{W}_{\har}^{\Sigma,X_{K}}} \in K \langle\langle e_{z_{0}},\ldots,e_{z_{r}} \rangle\rangle_{\har}^{\Sigma,X_{K}}$ and
\newline $\har_{n}^{X_{K},\loc} = (\har_{n}(w))_{w \in \mathcal{W}_{\har,\loc}^{\Sigma}} \in K \langle\langle e_{0}^{\pm 1},e_{z_{1}},\ldots,e_{z_{r}} \rangle\rangle_{\har}^{\Sigma,X_{K}}$ 
\newline ii) For all $I \subset \mathbb{N}$, let $\har_{I}^{X_{K}} = (\har_{n}(X))_{n \in I}$ viewed as a map $I \rightarrow K \langle\langle e_{z_{0}},\ldots,e_{z_{r}} \rangle\rangle_{\har}^{\Sigma,X_{K}}$, 
and
\newline $\har_{I}^{X_{K},\loc} = (\har_{n}^{X_{K},\loc})_{n \in I}$ viewed as a map $I \rightarrow K \langle\langle e_{0}^{\pm 1},e_{z_{1}},\ldots,e_{z_{r}} \rangle\rangle_{\har}^{\Sigma,X_{K}}$.
\end{Definition}

\subsection{The localized $\Sigma$-harmonic Ihara action}

Generalizing the localized $\Sigma$-harmonic Ihara action to this setting is simple. Below, the notation $(z \mapsto z^{p^{\alpha}})_{\ast}$ refers to Definition \ref{modification of parameters}.

\begin{Proposition-Definition} There exists a natural explicit map, generalizing the localized $\Sigma$-harmonic Ihara action of \cite{I-2} and \S6, 
$$ (\circ_{\har}^{\Sigma,X_{K}})_{\loc} : (K\langle \langle e_{z_{0}},\ldots,e_{z_{r}} \rangle\rangle_{\har}^{\Sigma})_{S} \times \Map(\mathbb{N},K\langle \langle e_{0}^{\pm 1},e_{z_{1}},\ldots,e_{z_{r}} \rangle\rangle_{\har}^{\Sigma}) \rightarrow \Map(\mathbb{N},K\langle\langle e_{z_{0}},\ldots,e_{z_{r}} \rangle\rangle) $$
\noindent such that we have
$$ \har_{p^{\alpha}\mathbb{N}}^{X_{K}} = \har_{p^{\alpha}}^{X_{K}} \text{ } (\circ_{\har}^{\Sigma,X_{K}})_{\loc}\text{ }(z \mapsto z^{p^{\alpha}})_{\ast}(\har_{\mathbb{N}}^{X_{K},\loc}) $$
\end{Proposition-Definition}

\begin{proof} This is the same computation with in \S6 and \cite{I-2} \S5.
\end{proof}

\subsection{The elimination of the localization}

Unlike the previous step, the step of elimination of the localization is more subtle in the generic case than in the case of roots of unity. There is, first, a naive way to write it :

\begin{Proposition-Definition} \label{prop def elim generic} There exists a natural explicit map, generalizing the elimination of the localization in \cite{I-2} and \S6,
$$ \elim^{X_{K}} : \Map(\mathbb{N},K\langle \langle e_{0},e_{z_{1}},\ldots,e_{z_{r}} \rangle\rangle) \rightarrow \Map(\mathbb{N},K \langle\langle e_{0}^{\pm 1},e_{z_{1}},\ldots,e_{z_{r}} \rangle\rangle) $$
\noindent such that we have
$$ \elim^{X_{K}} (\har_{\mathbb{N}}^{X_{K}}) =  \har^{X_{K},\loc}_{\mathbb{N}} $$
\noindent Moreover, $\elim$ commutes with the map $(z \mapsto z^{p^{\alpha}})_{\ast}$ and more generally with all the maps of the type of Definition \ref{modification of parameters}. One has an explicit formula for $\elim^{X_{K}}$ involving the analogs of the coefficients $\mathcal{B}$ defined in \cite{I-2}, but depending on $z_{1},\ldots,z_{r}$, instead of roots of unity.
\end{Proposition-Definition}

\begin{proof} This is the same computation with in \S6 and \cite{I-2} \S5.
\end{proof}

\noindent In \S6, in order to define $\circ_{\har}^{\Sigma}$ for $\mathbb{P}^{1} - \{0,\mu_{p^{\alpha}N},\infty\}$, we had to apply $elim$ only in the case of roots of unity of order prime to $p$. This is not the case anymore here ; and we do not have satisfactory bounds of valuations of the coefficients $\mathcal{B}$ in our more general case. In the case of roots of unity of order prime to $p$ we had the following fact leading to satisfactory bounds of valuations of the coefficients $\mathcal{B}$ : for each product $z$ of elements of $\{z_{1},\ldots,z_{r}\}$, either $z=1$ or $|\frac{1}{z}|_{p}=|\frac{1}{z-1}|_{p}=1$. This does not remain true if $\{z_{1},\ldots,z_{r}\}$ is not included in the set of roots of unity of order prime to $p$ : at least one $|\frac{1}{z-1}|_{p}$ is bigger. This prevents us from defining  $\circ_{\har}^{\Sigma,X_{K}}$ as $(\circ_{\har}^{\Sigma,X_{K}})_{\loc} \circ (\id \times \elim^{X_{K}})$ as we did in \cite{I-2} for $\mathbb{P}^{1} - \{0,\mu_{N},\infty\}$ and in \S6 for $\mathbb{P}^{1} - \{0,\mu_{p^{\alpha}N},\infty\}$ : it would involve divergent series. Thus, we will express differently the elimination of the localization.
\newline 
\newline For any $x \in \mathbb{C}_{p}$ such that $|x|_{p}=1$, let $\omega(x)$ be the unique root of unity of order prime to $p$ in $\mathbb{C}_{p}$ such that $|x-\omega(x)|_{p}<1$. This defines a morphism of multiplicative groups from $\mathcal{O}_{\mathbb{C}_{p}}^{\times}$ to $\underset{p\nmid N}{\cup} \mu_{N}(\mathbb{C}_{p})$. Let us assume that either $K$ is a sub-topological field of $\mathbb{C}_{p}$, or $K$ contains $\mathbb{C}_{p}$ as a sub-topological field and that $z_{1},\ldots,z_{r}$ are chosen such that for all $j \in \{1,\ldots,r\}$, there exists a (necessarily unique) root of unity in $\mathbb{C}_{p}$ of order prime to $p$, which we denote again by $\omega(z_{j})$, such that $|z_{j} - \omega(z_{j})|_{p} < 1$. We have a morphism of multiplicative groups from the subgroup of $K^{\times}$ generated by the $z_{j}$'s to $\underset{p\nmid N}{\cup} \mu_{N}(\mathbb{C}_{p})$. We will then write 
$$ z_{j} = \omega(z_{j}) + \epsilon_{j} $$
\noindent and expand the result in terms of powers of $\epsilon_{j}$'s. We will first replace $\epsilon_{1},\ldots,\epsilon_{r}$ by formal variables $E_{1},\ldots,E_{r}$ in order to neutralize the possible problems of convergence. We let for convenience $y_{i} = \frac{1}{z_{i}}$ for $i=1,\ldots,d$. The multiple harmonic sums under consideration become :

\begin{Definition} Let, for $n_{d},\ldots,n_{1} \in \mathbb{Z}$, and $j_{d+1},\ldots,j_{1} \in \{1,\ldots,r\}$,
\begin{multline} \label{eq:formal E multiple harmonic sums}
\har^{E_{d},\ldots,E_{1}}_{m} \bigg(\begin{array}{c} z_{j_{d+1}},\ldots,z_{j_{1}} \\ n_{d},\ldots,n_{1} \end{array} \bigg) 
\\ = m^{n_{d}+\ldots+n_{1}} \sum_{0<m_{1}<\ldots<m_{d}<m} \frac{
\big( \frac{\omega(y_{j_{1}})+E_{j_{1}}}{\omega(y_{j_{2}})+E_{j_{2}}}\big)^{m_{1}} \ldots \big(\frac{\omega(z_{j_{d}})+E_{j_{d}}}{\omega(z_{j_{d+1}})+E_{j_{d+1}}}\big)^{m_{d}} 
\big( \omega(y_{j_{d+1}})+E_{j_{d+1}} \big)^{m}}{m_{1}^{n_{1}} \ldots m_{d}^{n_{d}}} \in \mathbb{Q}_{p}^{\unr}[E_{1},\ldots,E_{r}]
\end{multline}
\noindent Let 
$\har^{E_{d},\ldots,E_{1}}_{m,\loc}$ be their generating sequence and let 
$\har^{E_{d},\ldots,E_{1}}_{\loc}$ be the map 
$m \mapsto \har^{E_{d},\ldots,E_{1}}_{m,\loc}$. Same notations without $\loc$ for the restrictions to $n_{d},\ldots,n_{1}\geq 1$.
\end{Definition}

\noindent (\ref{eq:formal E multiple harmonic sums}) has an expansion as a polynomial of the $E_{j}$'s (the sum below is finite) :

\begin{Lemma} 
We have an expression of the form :
$$ 
\har^{E_{d},\ldots,E_{1}}_{m} \bigg(\begin{array}{c} z_{j_{d+1}},\ldots,z_{j_{1}} \\ n_{d},\ldots,n_{1} \end{array} \bigg)  = \sum_{l_{1},\ldots,l_{d}\geq 0}  \prod_{r=1}^{d}\frac{1}{l_{r}!}
\bigg( \frac{E_{j_{r}}}{\omega(y_{i_{r}})} \bigg)^{l_{r}} \times \bigg(
\begin{array}{l} \mathbb{Z}\text{-linear combination of localized weighted} \\ \text{multiple harmonic sums at roots of unity}
\\ \text{of order prime to p, } (\omega(y_{j_{1}}),\ldots,\omega(y_{j_{d+1}}))
\\ \text{multiplied by a positive power of m} 
\end{array} \bigg) $$	
\end{Lemma}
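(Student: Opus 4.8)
The plan is to expand each factor of the product inside the sum (\ref{eq:formal E multiple harmonic sums}) by the binomial theorem and to track the resulting monomials in the $E_{j}$'s. First I would rewrite the product of fractions by collecting, for each index $k\in\{1,\ldots,d+1\}$, the total power to which the base $\omega(y_{j_{k}})+E_{j_{k}}$ is raised. Since each base $\omega(y_{j_{k}})+E_{j_{k}}$ occurs once as a numerator and once as a denominator of adjacent factors (together with the final factor for $k=d+1$), this total exponent equals $m_{1}$ for $k=1$, the difference $m_{k}-m_{k-1}$ for $2\leq k\leq d$, and $m-m_{d}$ for $k=d+1$. Because the domain of summation is $0<m_{1}<\ldots<m_{d}<m$, each of these exponents is a \emph{strictly positive} integer bounded above by $m$. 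This is the crucial point: after regrouping no negative exponent survives, so each factor $(\omega(y_{j_{k}})+E_{j_{k}})^{e_{k}}$ with $e_{k}>0$ expands into a polynomial in $E_{j_{k}}$ of degree $e_{k}\leq m$, and the whole expression is a polynomial in the $E_{j}$'s, which is why the sum in the statement is finite.

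Next I would expand each factor as $(\omega(y_{j_{k}})+E_{j_{k}})^{e_{k}}=\omega(y_{j_{k}})^{e_{k}}\sum_{l_{k}\geq 0}\frac{1}{l_{k}!}\,e_{k}(e_{k}-1)\cdots(e_{k}-l_{k}+1)\,\big(E_{j_{k}}/\omega(y_{j_{k}})\big)^{l_{k}}$, which isolates the claimed prefactor $\frac{1}{l_{k}!}\big(E_{j_{k}}/\omega(y_{j_{k}})\big)^{l_{k}}$ and leaves the falling factorial $e_{k}(e_{k}-1)\cdots(e_{k}-l_{k}+1)$, a polynomial with integer coefficients in the summation variables $m_{1},\ldots,m_{d}$ and in $m$. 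Substituting these expansions into the weighted sum and collecting the coefficient of a fixed monomial in the $E_{j}$'s, each coefficient becomes, up to the factors $\omega(y_{j_{k}})^{e_{k}}$ that reassemble into exactly the tuple of prime-to-$p$ roots of unity $(\omega(y_{j_{1}}),\ldots,\omega(y_{j_{d+1}}))$, a finite $\mathbb{Z}$-linear combination of sums $\sum_{0<m_{1}<\ldots<m_{d}<m}\prod_{i}m_{i}^{a_{i}}/m_{i}^{n_{i}}$. Writing $m_{i}^{a_{i}}/m_{i}^{n_{i}}=1/m_{i}^{n_{i}-a_{i}}$ identifies each of these, after restoring the overall weight factor $m^{n_{d}+\ldots+n_{1}}$, as a localized weighted multiple harmonic sum at the roots of unity $\omega(y_{j_{k}})$, with weights shifted by the integers $a_{i}$ (hence possibly negative, which is precisely why the localized variant is needed). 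The power of $m$ produced by expanding the factor carrying the exponent $m-m_{d}$ then accounts for the stated positive power of $m$.

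The main obstacle I expect is combinatorial bookkeeping rather than anything conceptual: one must be careful that when an index is repeated among $j_{1},\ldots,j_{d+1}$ the corresponding variable $E_{j}$ collects contributions from several factors, and that the falling factorial in $e_{k}=m_{k}-m_{k-1}$, once expanded as a two-variable polynomial in $m_{k}$ and $m_{k-1}$, couples the weight shifts of adjacent indices. Verifying that the result is still exactly a $\mathbb{Z}$-linear combination of localized weighted multiple harmonic sums, and not something more general, requires tracking these shifts consistently. All of this follows the same pattern as the shifting and splitting manipulations used earlier (Lemma~\ref{shifting} and Lemma~\ref{splitting}), so no new idea is needed beyond the binomial expansion and the positivity of the regrouped exponents.
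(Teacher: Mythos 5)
Your proposal is correct and follows essentially the same route as the paper's proof: regrouping the bases $\omega(y_{j_k})+E_{j_k}$ so that each carries the strictly positive exponent $m_1$, $m_k-m_{k-1}$, or $m-m_d$, expanding by the binomial theorem with $\binom{e}{l}=\frac{1}{l!}e(e-1)\cdots(e-l+1)$ so that the falling factorials become integer-coefficient polynomials $P_{l_1,\ldots,l_d}(m_1,\ldots,m_d)$, and finally writing those polynomials as sums of monomials to identify each coefficient as a $\mathbb{Z}$-linear combination of localized weighted multiple harmonic sums at the roots of unity $\omega(y_{j_k})$. Your explicit remarks on the positivity of the regrouped exponents (hence finiteness of the sum) and on repeated indices are sound refinements of the same argument, not deviations from it.
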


\begin{proof} By expanding the powers of $\omega(y_{j_{r}})+E_{j_{r}}$, we get first that $\har^{E_{d},\ldots,E_{1}}_{m} \bigg(\begin{array}{c} z_{j_{d+1}},\ldots,z_{j_{1}} \\ n_{d},\ldots,n_{1} \end{array} \bigg)$ is equal to $m^{n_{1}+\ldots+n_{d}}$ multiplied by 
	$$  
	\sum_{l_{1},\ldots,l_{d}\geq 0} E_{j_{1}}^{l_{1}}\ldots E_{j_{d}}^{l_{d}} \sum_{0<m_{1}<\ldots<m_{d}<m}  {m_{1}-0 \choose l_{1}} \ldots {m-m_{d} \choose l_{d+1}} 
	\frac{\omega(y_{i_{1}})^{m_{1}-l_{1}} \omega(y_{i_{2}})^{m_{2}-m_{1}-l_{2}}  \ldots \omega(y_{i_{d+1}})^{m-m_{d}-l_{d}}   }{m_{1}^{n_{1}} \ldots m_{d}^{n_{d}}} $$
\noindent which, by rewriting the binomial coefficients, is of the following form, with $P_{l_{1},\ldots,l_{d}} \in \mathbb{Z}[M_{1},\ldots,M_{d}]$ :
$$
\sum_{l_{1},\ldots,l_{d}\geq 0}  \bigg( \prod_{r=1}^{d}\frac{E_{j_{r}}^{l_{r}}}{\omega(y_{i_{r}})^{l_{r}}   l_{r}!}\bigg) 
\sum_{0<m_{1}<\ldots<m_{d}<m}  P_{l_{1},\ldots,l_{d}}(m_{1},\ldots,m_{d}) 
\frac{\omega(y_{i_{1}})^{m_{1}} \omega(y_{i_{2}})^{m_{2}-m_{1}}  \ldots \omega(y_{i_{d+1}})^{m-m_{d}}}{m_{1}^{n_{1}} \ldots m_{d}^{n_{d}}}   $$
\noindent We obtain the result by writing each $P_{l_{1},\ldots,l_{d}}$ as a sum of monomials.
\end{proof}

\noindent We deduce another way to write the elimination of the localization :

\begin{Proposition-Definition} 
There exists an explicit map (the source and target of $\elim^{E_{1},\ldots,E_{r}}$ are defined as in A.1)
$$ \elim^{E_{1},\ldots,E_{r}} : \mathbb{Q}_{p}^{\unr}[E_{1},\ldots,E_{r}] \langle\langle e_{0 \cup (\cup_{p\nmid N} \mu_{N})} \rangle\rangle_{\har}^{\Sigma}  \rightarrow \mathbb{Q}_{p}^{\unr}[E_{1},\ldots,E_{r}] \langle \langle e_{0}^{\pm 1} \cup e_{\cup_{p\nmid N} \mu_{N}} \rangle\rangle_{\har,\loc}^{\Sigma} $$
\noindent such that we have 
$$ \elim^{E_{1},\ldots,E_{r}} (\har^{E_{1},\ldots,E_{r}}) =  \har^{E_{1},\ldots,E_{r}}_{\loc} $$
\end{Proposition-Definition} 

\begin{proof} This follows from the result of elimination of the localization for multiple harmonic sums at roots of unity of order prime to $p$ (\cite{I-2}, \S5), and the previous lemma, noting that the coefficients in the previous lemma are in $\mathbb{Z}$, and thus do not affect the convergence of the series. 
\end{proof}

\noindent It is also possible to define this map in a more universal way which is not relative to $X_{K}$ but involves one variable $E_{\xi}$ per root of unity $\xi$ of order prime to $p$.

\subsection{The $\Sigma$-harmonic Ihara action and related objects}

In this paragraph, we assume that $|z_{j} - \omega(z_{j})|_{p} < p^{-\frac{1}{p-1}}$ for all $j$. We can now generalize the $\Sigma$-harmonic Ihara action. 

\begin{Proposition-Definition} If $|\epsilon_{j}|_{p} < p^{-\frac{1}{p-1}}$ for all $j$, \label{regularized harmonic Ihara action} then
$$ \circ_{\har}^{\Sigma,X_{K}} = (\circ_{\har}^{\Sigma,X_{K}})_{\loc} \text{ }\circ\text{ } (\id \times \elim^{\epsilon_{1},\ldots,\epsilon_{r}}) $$
\noindent is well-defined and we have
\begin{equation} \label{eq:generic harmonic Ihara action}\har_{p^{\alpha}\mathbb{N}}^{X_{K}} = \har_{p^{\alpha}}^{X_{K}} \text{ } \circ_{\har}^{\Sigma,X_{K}}\text{ }(z \mapsto z^{p^{\alpha}})_{\ast}(\har_{\mathbb{N}}^{\mu_{N}}) \end{equation}
\noindent where $N$, prime to $p$, is the lcm of the orders of $\omega(z_{j})$'s as roots of unity.
\end{Proposition-Definition}

\begin{proof} The well-definedness is a consequence of A.3,  and the equation (\ref{eq:generic harmonic Ihara action}) is a direct consequence of A.2, A.3 and the fact that power series $\sum_{l \geq 0} \frac{x^{l}}{l!}$ is convergent in $\mathbb{Q}_{p}$ for $x \in \mathbb{Q}_{p}$ such that  $v_{p}(x)>\frac{1}{p-1}$.
\end{proof}

\begin{Remark} 
\noindent \newline i) In this point of view, the coefficient of $(\epsilon_{1} \ldots \epsilon_{r})^{0}$ looks like a "regularization" (in a quite unusual sense : regularization "by roots of unity of order prime to $p$") of the $\Sigma$-harmonic Ihara action.
\newline ii) The results of \S6, concerning $X_{K_{p^{\alpha}N},p^{\alpha}N}$, give the particular case where $\epsilon_{1}=\ldots= \epsilon_{d}=0$.
\newline iii) Thus in fine, this result is a sort of analytic interpretation, where "analytic" refers to functions of $\epsilon_{1},\ldots,\epsilon_{d}$, of the definition of the $\Sigma$-harmonic Ihara action for $X_{K_{p^{\alpha}N},p^{\alpha}N}$ in \S6.
\end{Remark}

\noindent The coefficients of $\circ_{\har}^{\Sigma,X_{K}}$ are not iterated integrals a priori : but, by analogy with the case of roots of unity, we will call them pseudo - iterated integrals, and define : 

\begin{Definition} i) Let $\comp^{\smallint \leftarrow \Sigma}(X_{K}) : (K\langle\langle e_{z_{0}},\ldots,e_{z_{r}} \rangle\rangle_{\har}^{\Sigma})_{S} \rightarrow K \langle\langle e_{z_{0}},\ldots,e_{z_{r}} \rangle\rangle_{S}$ be defined by : 
$\comp^{\smallint \leftarrow \Sigma}(X_{K})[e_{0}^{l}e_{z_{j_{d+1}}}e_{0}^{n_{d}-1}e_{z_{j_{d}}}\ldots e_{0}^{n_{1}-1}e_{z_{j_{1}}}]$ is the coefficient of $n^{l}\har_{n}(\emptyset)$ in the coefficient of 
\newline $\big(\begin{array}{c} z_{j_{d+1}},\ldots,z_{j_{1}} \\ n_{d},\ldots,n_{1} \end{array} \big)$ of the equation (\ref{eq:generic harmonic Ihara action}).
\newline ii) Let $\comp^{\Sigma \leftarrow \smallint}(X_{K}) : K \langle\langle e_{z_{0}},\ldots,e_{z_{r}} \rangle\rangle_{S} \rightarrow (K\langle\langle e_{z_{0}},\ldots,e_{z_{r}} \rangle\rangle_{\har}^{\Sigma})_{S}$ be the map defined by
\newline $\comp^{\Sigma \leftarrow \smallint}(X_{K})(f)[\big(\begin{array}{c} z_{j_{d+1}},\ldots,z_{j_{1}} \\ n_{d},\ldots,n_{1} \end{array} \big)] = \sum_{l\geq 0} f[e_{0}^{l}e_{z_{j_{d+1}}} e_{0}^{n_{d}-1}e_{z_{j_{d}}}\ldots e_{0}^{n_{1}-1}e_{z_{j_{1}}}]$.
\newline We call them the maps of comparison between sums and pseudo-integrals.
\end{Definition}

\begin{Proposition} \label{inversibility of formulas} We have $\comp^{\Sigma \leftarrow \smallint} \circ \comp^{\smallint \leftarrow \Sigma} = \id$.
\end{Proposition}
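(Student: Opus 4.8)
The plan is to deduce the generic identity from the already-proved root-of-unity case (the analogous Proposition in \S\ref{Sigma harmonic action}, itself resting on the property of the elimination coefficients $\mathcal{B}$ of \cite{I-2}, \S5) by means of the $\omega(z_j)+E_j$-expansion of \S A.3. First I would observe that both $\comp^{\smallint \leftarrow \Sigma}(X_K)$ and $\comp^{\Sigma \leftarrow \smallint}(X_K)$ are, by their definitions, read off from the single harmonic identity (\ref{eq:generic harmonic Ihara action}), whose right-hand side is assembled from the localized action $(\circ_{\har}^{\Sigma,X_K})_{\loc}$ and the elimination of the localization $\elim^{\epsilon_1,\ldots,\epsilon_r}$ of Proposition-Definition \ref{regularized harmonic Ihara action}. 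Thus the composite $\comp^{\Sigma \leftarrow \smallint}\circ\comp^{\smallint\leftarrow\Sigma}$ is a $K$-linear endomorphism of $(K\langle\langle e_{z_0},\ldots,e_{z_r}\rangle\rangle_{\har}^{\Sigma})_S$ whose matrix, in the basis of $\Sigma$-harmonic words graded by weight and depth, is governed entirely by those two ingredients; the content of the statement is that this matrix is the identity.

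Next I would carry out the reduction to roots of unity. Writing $z_j=\omega(z_j)+\epsilon_j$ and replacing each $\epsilon_j$ by the formal variable $E_j$, the expansion Lemma of \S A.3 rewrites every $\har_m^{E_d,\ldots,E_1}$ as a finite $\mathbb{Z}$-linear combination of localized weighted multiple harmonic sums at the roots of unity $\omega(y_{j_1}),\ldots,\omega(y_{j_{d+1}})$ of order prime to $p$, each term carried by a factor $\prod_{r}\frac{1}{l_r!}\big(E_{j_r}/\omega(y_{i_r})\big)^{l_r}$ and a positive power of $m$. By the Proposition-Definition defining $\elim^{E_1,\ldots,E_r}$, the elimination of the localization for $X_K$ is obtained from the prime-to-$p$ root-of-unity elimination by exactly this $\mathbb{Z}$-linear substitution; hence so are both comparison maps, coefficientwise in the $E_j$. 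Consequently $\comp^{\Sigma\leftarrow\smallint}\circ\comp^{\smallint\leftarrow\Sigma}$ for $X_K$ is, term by term in the $E_j$-expansion, the corresponding composite built from the prime-to-$p$ root-of-unity sums, which is the identity by the Proposition of \S\ref{Sigma harmonic action} and \cite{I-2}, \S5. I would then specialize $E_j\mapsto\epsilon_j$, which is legitimate because the hypothesis $|\epsilon_j|_p<p^{-1/(p-1)}$ of Proposition-Definition \ref{regularized harmonic Ihara action} forces absolute convergence of every series $\sum_l x^l/l!$ involved, and conclude $\comp^{\Sigma\leftarrow\smallint}\circ\comp^{\smallint\leftarrow\Sigma}=\id$ on $(K\langle\langle e_{z_0},\ldots,e_{z_r}\rangle\rangle_{\har}^{\Sigma})_S$.

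The hard part, and the step I expect to be the main obstacle, is to check that the unipotent triangularity underlying the root-of-unity identity genuinely survives the $E_j$-deformation rather than holding only formally. Concretely, I would verify that the auxiliary polynomials $P_{l_1,\ldots,l_d}$ and the factors $\big(E_{j_r}/\omega(y_{i_r})\big)^{l_r}/l_r!$ produced by the expansion Lemma only raise the weight, through the positive powers of $m$, and never lower it, so that the new contributions sit strictly above the diagonal for the weight filtration while the diagonal entries remain exactly those of the prime-to-$p$ case. This is what makes the triangular inversion of \cite{I-2}, \S5 apply verbatim and keeps the diagonal of $\comp^{\Sigma\leftarrow\smallint}\circ\comp^{\smallint\leftarrow\Sigma}$ equal to the identity; the norm bound on $\epsilon_j$ then upgrades the formal identity in $\mathbb{Q}_p^{\unr}[[E_1,\ldots,E_r]]$ to an honest one and, after specialization, to the desired equality over $K$.
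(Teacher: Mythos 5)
Your proposal is correct and takes essentially the same route as the paper: the paper's own proof is a one-line citation of Proposition-Definition \ref{regularized harmonic Ihara action} together with the root-of-unity case of \S\ref{Sigma harmonic action} and \cite{I-2}, \S 5, which is precisely your reduction via the $\omega(z_j)+E_j$-expansion of \S A.3 followed by specialization $E_j \mapsto \epsilon_j$ under the hypothesis $|\epsilon_j|_p < p^{-1/(p-1)}$. Your final paragraph only makes explicit the triangularity/invertibility check that the paper leaves implicit in its appeal to the coefficients $\mathcal{B}$ of the elimination of the localization.
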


\begin{proof} Follows from the Proposition-Definition \ref{regularized harmonic Ihara action} and the proof for roots of unity in \S6 and in \cite{I-2} \S5.
\end{proof}

\noindent The following definition generalizes the definition of adjoint $p$-adic multiple zeta values at roots of unity and $\Sigma$-harmonic Frobenius \cite{I-2}, and their generalizations to $X_{K_{p^{\alpha}N},p^{\alpha}N}$ of \S5.

\begin{Definition} i) Let $\text{PseudoLi}^{\Ad}_{p,\alpha}(X_{K}) = (\comp^{\Sigma \leftarrow \smallint})(X_{K})( \har_{p^{\alpha}}^{X_{K}} \in K\langle\langle e_{z_{0}},e_{z_{1}},\ldots,e_{z_{r}} \rangle\rangle$. We call $p$-adic pseudo adjoint hyperlogarithms the coefficients of $\text{PseudoLi}^{\Ad}_{p,\alpha}(X_{K})$.
\newline ii) We call $\Sigma$-harmonic pseudo-Frobenius the map 
$$ \begin{array}{ccc} K\langle\langle e_{z_{0}},\ldots,e_{z_{r}} \rangle\rangle_{\har}^{\Sigma} \rightarrow  K\langle\langle e_{z_{0}},\ldots,e_{z_{r}} \rangle\rangle_{\har}^{\Sigma}
\\ h \mapsto \text{PseudoLi}^{\Ad}_{p,\alpha}(X_{K}) (\circ^{\Sigma,X_{K}}_{\har}) \text{ }h 
\end{array} $$
\end{Definition}

\noindent The next statement generalizes the expansion of multiple harmonic sums at roots of unity in terms of $p$-adic multiple zeta values at roots of unity (Corollary I-2.a  in \cite{I-2}) and its generalization to $X_{K_{p^{\alpha}N},p^{\alpha}N}$ in \S5.

\begin{Proposition} We have $\har_{p^{\alpha}}^{X_{K}} = \comp^{\Sigma \leftarrow \smallint} \text{PseudoLi}^{\Ad}_{p,\alpha}(X_{K})$, i.e., for all words,
$$ \har_{p^{\alpha}} \big(\begin{array}{c} z_{j_{d+1}},\ldots,z_{j_{1}} \\ n_{d},\ldots,n_{1} \end{array} \big) = \sum_{l\geq 0} \text{PseudoLi}^{\Ad}_{p,\alpha} \big(\begin{array}{c} z_{j_{d+1}},\ldots,z_{j_{1}} \\ l,n_{d},\ldots,n_{1} \end{array} \big)(X_{K}) $$
\end{Proposition}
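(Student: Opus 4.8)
The plan is to deduce the identity as a purely formal consequence of the inversibility relation of Proposition \ref{inversibility of formulas}, once the two comparison maps are composed in the order compatible with their sources and targets. First I would note that, by its definition, $\text{PseudoLi}^{\Ad}_{p,\alpha}(X_{K})$ is the image of the generating sequence $\har_{p^{\alpha}}^{X_{K}}$ under the comparison map going from the $\Sigma$-space to the space of (pseudo-)integrals: since $\har_{p^{\alpha}}^{X_{K}}$ lies in $(K\langle\langle e_{z_{0}},\ldots,e_{z_{r}}\rangle\rangle_{\har}^{\Sigma})_{S}$ while $\text{PseudoLi}^{\Ad}_{p,\alpha}(X_{K})$ lies in $K\langle\langle e_{z_{0}},\ldots,e_{z_{r}}\rangle\rangle_{S}$, the only type-consistent reading is $\text{PseudoLi}^{\Ad}_{p,\alpha}(X_{K}) = \comp^{\smallint \leftarrow \Sigma}(X_{K})(\har_{p^{\alpha}}^{X_{K}})$.

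Before composing I would check that $\har_{p^{\alpha}}^{X_{K}}$ really lies in the summable subspace $(\cdots)_{S}$, so that both comparison maps are defined on the relevant elements; this is where the bounds of valuation enter. The analogue in this setting of the estimate $v_{p}(\har_{p^{\alpha}}(w)) \geq \weight(w)$ gives $|\har_{p^{\alpha}}(w)|_{p} \leq p^{-\weight(w)}$, hence $\sup_{w \text{ of weight } s,\ \text{depth } d} |\har_{p^{\alpha}}(w)|_{K} \to_{s \to \infty} 0$, which is exactly the summability condition. I would then apply $\comp^{\Sigma \leftarrow \smallint}(X_{K})$ to both sides and invoke Proposition \ref{inversibility of formulas}, getting
$$ \comp^{\Sigma \leftarrow \smallint}(X_{K})\big(\text{PseudoLi}^{\Ad}_{p,\alpha}(X_{K})\big) = \big(\comp^{\Sigma \leftarrow \smallint} \circ \comp^{\smallint \leftarrow \Sigma}\big)(\har_{p^{\alpha}}^{X_{K}}) = \har_{p^{\alpha}}^{X_{K}}, $$
which is the asserted equality of generating sequences. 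Unfolding it coefficient by coefficient through the explicit formula $\comp^{\Sigma \leftarrow \smallint}(f)[\,\cdot\,] = \sum_{l \geq 0} f[e_{0}^{l} e_{z_{j_{d+1}}} e_{0}^{n_{d}-1} e_{z_{j_{d}}} \cdots e_{0}^{n_{1}-1} e_{z_{j_{1}}}]$ recovers the displayed summation over $l$.

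The argument is entirely formal: the real work was already done in establishing the inversibility of the comparison maps (Proposition \ref{inversibility of formulas}) and in constructing the well-defined $\Sigma$-harmonic Ihara action $\circ_{\har}^{\Sigma,X_{K}}$ of Proposition-Definition \ref{regularized harmonic Ihara action} on which both comparison maps rest. The only place demanding care --- and the expected obstacle to a clean write-up rather than to the mathematics --- is the bookkeeping: ensuring the composition is taken in the order for which Proposition \ref{inversibility of formulas} applies, and matching the index conventions so that the leading exponent $l$ in the displayed sum corresponds to the number of leading letters $e_{0}$ in the definition of $\comp^{\Sigma \leftarrow \smallint}$.
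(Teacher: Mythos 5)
Your proof is correct and coincides with the paper's own argument, which consists precisely of unfolding the definition of $\text{PseudoLi}^{\Ad}_{p,\alpha}(X_{K})$ as $\comp^{\smallint \leftarrow \Sigma}(X_{K})(\har_{p^{\alpha}}^{X_{K}})$ (the type-consistent reading of the misprinted definition, as you note) and invoking the identity $\comp^{\Sigma \leftarrow \smallint} \circ \comp^{\smallint \leftarrow \Sigma} = \id$ of Proposition A.10. Your additional verification that $\har_{p^{\alpha}}^{X_{K}}$ lies in the summable subspace, via the valuation bound $v_{p}(\har_{p^{\alpha}}(w)) \geq \weight(w)$, is sound and merely makes explicit a point the paper leaves implicit.
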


\begin{proof} By Proposition \ref{inversibility of formulas}. \end{proof}

\subsection{$z_{1},\ldots,z_{r}$ viewed as variables : $p$-adic pseudo adjoint multiple polylogarithms}
 
Our point of view of replacing $\epsilon_{1},\ldots,\epsilon_{r}$ by formal parameters and viewing them as perturbations makes better sense if we allow $\epsilon_{1},\ldots,\epsilon_{r}$ to vary. According to Goncharov \cite{Goncharov}, the functions obtained from hyperlogarithms (in the sense of Proposition-Definition \ref{prop connexion}) by making $z_{1},\ldots,z_{r}$ into variables are called multiple polylogarithms. We are going to express how the $p$-adic pseudo adjoint hyperlogarithms of \S A.2 depend on $z_{1},\ldots,z_{r}$. Let $U_{K}$ be the set of $z \in K$ such that there exists a (ncessarily unique) root of unity $\eta$ of order prime to $p$ such that $|z-\eta|_{K}<p^{\frac{-1}{p-1}}$. For $r \in \mathbb{N}^{\ast}$, $U_{K}^{r}$ is equipped with the product topology.

\begin{Proposition-Definition} The coefficients of the following map are locally analytic functions (of $(y_{1},\ldots,y_{r}) = (\frac{1}{z_{1}},\ldots,\frac{1}{z_{r}})$) :
$$ \text{PseudoLi}_{p,\alpha}^{\Ad} : \begin{array}{ccc} U_{K}^{r} & \rightarrow & K \langle\langle e_{z_{0}},\ldots,e_{z_{r}} \rangle\rangle_{\har}^{\Sigma} \\ 
(z_{1},\ldots,z_{r}) & \mapsto & \text{PseudoLi}_{p,\alpha}^{\Ad}(\mathbb{P}^{1} - \{0,z_{1},\ldots,z_{r},\infty\}/K) \end{array} $$
\noindent We call its coefficients the \emph{$p$-adic pseudo adjoint multiple polylogarithms}. (Pseudo $\Ad p$MPL's)
\end{Proposition-Definition}

\begin{proof} They are convergent infinite sums of products of locally constant functions by rational functions having poles only at $z_{i}=0$.
\end{proof}

\noindent The values of Pseudo $\Ad p$MPL's at tuples of roots of unity are the $\Ad p$MZV$\mu_{p^{\alpha}N}$'s defined in this paper ; their values at tuples of roots of unity of order prime to $p$ are the $\Ad p$MZV$\mu_{N}$'s.

\newpage


\begin{thebibliography}{50}
\bibitem[B]{Besser} A.Besser - \emph{Coleman integration using the Tannakian formalism}, Math. Ann. 322 (2002) 1, 19-48.
\bibitem[BD-J]{Besser de Jeu} A. Besser R. de Jeu - $\Li^{(p)}$-serive ? An algorithm for computing $p$-adic polylogarithms. Math. Com. 77 (262) (2008) 1105-1134.
\bibitem[BF]{Besser Furusho} A.Besser, H.Furusho - \emph{The double shuffle relations for $p$-adic multiple zeta values}, AMS Contemporary Math, Vol 416, (2006), 9-29.
\bibitem[CL]{CLS} B.Chiarellotto, B.Le Stum - \emph{F-isocristaux unipotents} - Compositio Math. 116, 81-110 (1999).
\bibitem[Co]{Coleman} R. Coleman - \emph{Dilogarithms, regulators and $p$-adic $L$-functions} - Inventiones Mathematicae, June 1982, Vol. 69, Issue 2, pp.171-208
\bibitem[D-C]{Dan-Cohen} I. Dan-Cohen - \emph{Mixed Tate motives and the unit equation, II} arXiv : 1510.01362
\bibitem[De]{Deligne} P.Deligne - \emph{Le groupe fondamental de la droite projective moins trois points}, Galois Groups over $\mathbb{Q}$ (Berkeley, CA, 1987), Math. Sci. Res. Inst. Publ. 16, Springer-Verlag, New York, 1989.
\bibitem[DeG]{Deligne Goncharov} P. Deligne, A.B. Goncharov, \emph{Groupes fondamentaux motiviques de Tate mixtes}, Ann. Sci. Ecole Norm. Sup. 38.1 , 2005, pp. 1-56
\bibitem[F1]{Furusho 1} H.Furusho - \emph{$p$-adic multiple zeta values I -- p-adic multiple polylogarithms and the p-adic KZ equation}, Inventiones Mathematicae, Volume 155, Number 2, 253-286, (2004).
\bibitem[F2]{Furusho 2} H.Furusho - \emph{$p$-adic multiple zeta values II -- Tannakian interpretations}, Amer.J.Math, Vol 129, No 4, (2007),1105-1144.
\bibitem[FJ]{Furusho Jafari} H.Furusho, A.Jafari - \emph{Regularization and generalized double shuffle relations for p-adic multiple zeta values}, Compositio Math. Vol 143, (2007), 1089-1107.
\bibitem[FKMT]{FKMT} H.Furusho, Y.Komori, K.Matsumoto, H.Tsumura -\emph{Fundamentals of $p$-adic multiple $L$ functions and evaluation of their special values} - to appear in Selecta Math.
\bibitem[G]{Goncharov} A.B.Goncharov - \emph{Multiple polylogarithms and mixed Tate motives}, arXiv:0103059v4
\bibitem[J I-1]{I-1} D. Jarossay,  \emph{An explicit theory of  $\pi_{1}^{\un,\crys}(\mathbb{P}^{1} - \{0,\mu_{N},\infty\})$ - I-1 : Direct solution of the equation of horizontality of Frobenius}, arXiv:1503.08756
\bibitem[J I-2]{I-2} D. Jarossay, \emph{An explicit theory of  $\pi_{1}^{\un,\crys}(\mathbb{P}^{1} - \{0,\mu_{N},\infty\})$ - I-2 : Indirect solution of the equation of horizontality of Frobenius}, arXiv:1501.04893
\bibitem[J I-3]{I-3} D. Jarossay, \emph{An explicit theory of  $\pi_{1}^{\un,\crys}(\mathbb{P}^{1} - \{0,\mu_{N},\infty\})$ - I-3 : The number of iterations of Frobenius viewed as a variable}, arXiv:1610.09107
\bibitem[J II-1]{II-1} D. Jarossay, \emph{An explicit theory of  $\pi_{1}^{\un,\crys}(\mathbb{P}^{1} - \{0,\mu_{N},\infty\})$ - II-1 : Standard algebraic relations of prime weighted multiple harmonic sums and adjoint multiple zeta values}, arXiv:1412.5099
\bibitem[J II-2]{II-2} D. Jarossay, \emph{An explicit theory of  $\pi_{1}^{\un,\crys}(\mathbb{P}^{1} - \{0,\mu_{N},\infty\})$ - II-2 : From algebraic relations of multiple harmonic sums to those of $p$-adic multiple zeta values at roots of unity}, arXiv:1601.01158
\bibitem[J II-3]{II-3} D. Jarossay, \emph{An explicit theory of  $\pi_{1}^{\un,\crys}(\mathbb{P}^{1} - \{0,\mu_{N},\infty\})$ - II-3 : Sequences of multiple harmonic sums viewed as periods}, arXiv:1601.01159
\bibitem[J III-1]{III-1} D. Jarossay, \emph{An explicit theory of  $\pi_{1}^{\un,\crys}(\mathbb{P}^{1} - \{0,\mu_{N},\infty\})$ - III-1} In preparation.
\bibitem[J III-2]{III-2} D. Jarossay, \emph{An explicit theory of  $\pi_{1}^{\un,\crys}(\mathbb{P}^{1} - \{0,\mu_{N},\infty\})$ - III-2 : Around the non-vanishing of multiple harmonic sums}, arXiv:1707.01924
\bibitem[J IV-1]{IV-1} D. Jarossay, \emph{An explicit theory of  $\pi_{1}^{\un,\crys}(\mathbb{P}^{1} - \{0,\mu_{N},\infty\})$ - IV-1}. In preparation.
\bibitem[J IV-2]{IV-2} D. Jarossay, \emph{An explicit theory of  $\pi_{1}^{\un,\crys}(\mathbb{P}^{1} - \{0,\mu_{N},\infty\})$ - IV-2}. In preparation.
\bibitem[J V-2]{V-2} D. Jarossay, \emph{An explicit theory of  $\pi_{1}^{\un,\crys}(\mathbb{P}^{1} - \{0,\mu_{N},\infty\})$ - V-2}. In preparation.
\bibitem[J N1]{J N1} D. Jarossay, \emph{Un cadre explicite pour les polylogarithmes multiples $p$-adiques et les multizêtas $p$-adiques}. Comptes-Rendus Mathématiques 352 (2014) pp. 767-771
\bibitem[J N2]{J N2} D. Jarossay, \emph{Une notion de multizêtas finis associée au groupe fondamental de $\mathbb{P}^{1} - \{0,1,\infty\}$}.  Comptes-Rendus Mathématiques 353 (2015) pp. 871-876
\bibitem[S1]{Shiho 1} A.Shiho - \emph{Crystalline fundamental groups. I. Isocristals on log crystalline site and log convergent site}, J. Math. Soc. Univ Tokyo 7 (2000) no. 4, 509-656
\bibitem[S2]{Shiho 2} A.Shiho - \emph{Crystalline fundamental groups. II. Log convergent cohomology and rigid cohomology},  
J. Math. Soc. Univ. Tokyo 9 (2002), no. 1, 1-163
\bibitem[U1]{U1} S.Unver - \emph{$p$-adic multi-zeta values}. Journal of Number Theory, 108, 111-156, (2004).
\bibitem[U2]{U2} S.Unver - \emph{Cyclotomic $p$-adic multi-zeta values in depth two}. Manuscripta Math. 149 (2016) n°3-4, 405-441
\bibitem[U3]{U3} S.Unver - \emph{A note on the algebra of $p$-adic multi-zeta values}. Communications in Number Theory and Physics, 9 (2015) n°4, 689-705
\bibitem[U4]{U4} S.Unver - \emph{Cyclotomic $p$-adic multi-zeta values} arXiv:1701.05729.
\bibitem[V]{Vologodsky} V.Vologodsky, \emph{Hodge structure on the fundamental group and its application to $p$-adic integration}, Moscow Math. J. 3 (2003), no. 1, 205-247.
\bibitem[Y]{Yamashita} G.Yamashita, \emph{Bounds for the dimension of $p$-adic multiple $L$-values spaces}. Documenta Mathematica , Extra Volume Suslin (2010) 687-723
\end{thebibliography}
\end{document}